\newtheorem{assumption}[theorem]{Assumption}
\title{An approximation scheme for
semilinear parabolic PDEs with convex and coercive Hamiltonians
\thanks{The authors thank the Editor, the Associate Editor, and the two referees for
their valuable comments and suggestions, and T. Lyons and C.
Reisinger for their helpful discussions. This work was presented at
seminars in Oxford University, Fudan University, Shanghai Jiao Tong
University and Jilin University. The authors thank the participants
for helpful comments and suggestions.}}
\author{Shuo Huang\thanks{Department of Statistics, The University of Warwick, Coventry CV4
7AL, U.K. \texttt{s.huang.13@warwick.ac.uk} } \and
Gechun Liang\thanks{%
Department of Statistics, The University of Warwick, Coventry CV4
7AL, U.K. Partially supported
by Royal Society International Exchanges (Grant No. 170137), NSFC (Grant No. 11771158) and
a Senior Fellowship at Freiburg Institute of Advanced Studies (FRIAS), University of Freiburg.
\texttt{g.liang@warwick.ac.uk} } \and
Thaleia Zariphopoulou\thanks{%
Departments of Mathematics and IROM, The University of Texas at
Austin, U.S.A. and the Oxford-Man Institute, University of Oxford,
U.K. \texttt{zariphop@math.utexas.edu}}}
\begin{document}

\maketitle

\begin{abstract}
We propose an approximation scheme for a class of semilinear
parabolic equations that are convex and coercive in their gradients.
Such equations arise often in pricing and portfolio management in
incomplete markets and, more broadly, are directly connected to the
representation of solutions to backward stochastic differential
equations. The proposed scheme is based on splitting the equation in
two parts, the first corresponding to a linear parabolic equation
and the second to a Hamilton-Jacobi equation. The solutions of these
two equations are approximated using, respectively, the Feynman-Kac
and the Hopf-Lax formulae. We establish the convergence of the
scheme and determine the convergence rate, combining Krylov's
shaking coefficients technique and Barles-Jakobsen's optimal
switching approximation.
\end{abstract}

\begin{keywords}
Splitting, Feynman-Kac formula, Hopf-Lax formula, viscosity
solutions, shaking coefficients technique, optimal switching
approximation.
\end{keywords}

\begin{AMS} 35K65, 65M12, 93E20
\end{AMS}

\pagestyle{myheadings} \thispagestyle{plain} \markboth{Shuo Huang,
Gechun Liang and Thaleia Zariphopoulou}{An approximation scheme for
semilinear parabolic equations}

%%%%%%%%%%%%%%%%%%%%%%%%%%%%%%%%%%%%%%%%%%%%%%%%%%%%%%%%%%%
%%%%%%%%%%%%%%%%%%%%%%%%%%%%%%%%%%%%%%%%%%%%%%%%%%%%%%%%%%

\section{Introduction}

We consider semilinear parabolic equations of the form
\begin{equation}
\label{PDE_1} \left\{\begin{array}{ll} \displaystyle
-\partial_tu-\frac{1}{2}\text{Trace}\left(\sigma\sigma^T(t,x)\partial_{xx}u\right)-b(t,x)\cdot\partial_xu
+H(t,x,\partial_xu)=0&\text{in}\ Q_T;\\
\displaystyle u(T,x)=U(x)&\text{in}\ \mathbb{R}^n,
\end{array}\right.
\end{equation}
where $Q_T=[0,T)\times\mathbb{R}^n$. A key feature is that the
Hamiltonian $H(t,x,p)$ is convex and coercive in $p$. In particular,
the coercivity covers the case that $H$ has quadratic growth in $p$,
a case that corresponds to a rich class of equations in mathematical
finance arising in optimal investment with homothetic risk
preferences (\cite{Hu}), exponential indifference valuation
(\cite{HL, HL1}), entropic risk measures (\cite{CHLZ}) and others.

More broadly, these equations are inherently connected to
(quadratic) backward stochastic differential equations (BSDE), a
central area of stochastic analysis (\cite{DHB} \cite{Peng} and
\cite{Kobylanski}). Specifically, the Hamiltonian $H(t,x,p)$
 is directly related to the BSDE's driver and,
moreover, the solution of (\ref{PDE_1}) yields a functional-form
representation of the processes solving the BSDE.

General existence and uniqueness results can be found, among others
in \cite{Kobylanski} as well as in \cite{Hu}, where BSDE techniques
have been mainly applied. Closed-form solutions can be constructed
only in one-dimensional cases (\cite{Zari}). Furthermore,
approximation schemes have been developed; see \cite{Touzi2} and
\cite{CR} for more references.

Herein, we contribute to further studying problem (\ref{PDE_1}) by
proposing a new approximation scheme. The key idea is to use in an
essential way the \textit{convexity} of the Hamiltonian with respect
to the gradient. This property is natural in all above applications
but it has not been adequately exploited in the existing
approximation studies.

To highlight the main ideas and build intuition, we start with some
preliminary informal arguments, considering for simplicity slightly
simpler equations. To this end, consider the Hamilton-Jacobi (HJ)
equation
\begin{equation}
\label{HJ_equation} \left\{\begin{array}{ll} \displaystyle
-\partial_{t}u+H(\partial_{x}u)=0&\text{in}\ Q_T;\\
\displaystyle u(T,x)=U(x)&\text{in}\ \mathbb{R}^n,
\end{array}\right.
\end{equation}
where the Hamiltonian $H$ is convex and {coercive}, and the terminal
datum $U$ is bounded and Lipschitz continuous. Let $L$ be the
Legendre (convex dual) transform of $H$,
$L(q)=\sup_{p\in\mathbb{R}^n}\{p\cdot q-H(p)\}$. The Fenchel-Moreau
theorem then yields that $H(p)=\sup_{q\in\mathbb{R}^n}\{p\cdot
q-L(q)\}$ and, thus, the HJ equation in (\ref{HJ_equation}) can be
alternatively written as
$$-\partial_{t}u+\sup_{q\in\mathbb{R}^n}\left\{\partial_{x}u\cdot q-L(\partial_{x}u)\right\}=0.$$
Classical arguments from control theory then imply the deterministic
optimal control representation
$$u(t,x)=\inf_{q\in\mathbb{L}^{2}[t,T]}\left[\int_{t}^{T}L(q_{s})ds+U(X_{T}^{t,x;q})\right],$$
with the controlled state equation
$X_{s}^{t,x;q}=x-\int_{t}^{s}q_{u}du$, for $s\in[t,T].$

Hopf and Lax observed that, instead of considering the controls in
$\mathbb{L}^2[t,T]$, it suffices to optimize over the controls
generating geodesic paths of $X^{t,x;q}$, i.e. the controls
$\hat{q}$ such that $X^{t,x;\hat{q}}_T=y$, for any
$y\in\mathbb{R}^n$. Such controls are given by
$\hat{q}_s=\frac{x-y}{T-t}$, for $s\in[t,T]$. The above ``infinite
dimensional'' optimal control problem is thus reduced to the
``finite dimensional'' minimization problem
\begin{equation}\label{Hopf-Lax}
u(t,x)=\inf_{y\in\mathbb{R}^{n}}\left\{(T-t)L(\frac{x-y}{T-t})+U(y)\right\}.\
\ \ (\text{Hopf-Lax formula})
\end{equation}
{There exist several well established algorithms to study this type
of minimization problems (see, for example, \cite{NM} for the
Nelder-Mead simplex algorithm). We also refer to section 3.3.2.b in
\cite{Evans} for the introduction of the Hopf-Lax formula from a
classical calculus of variations perspective.}

Adding a diffusion term to equation (\ref{HJ_equation}) yields the
semilinear parabolic equation
\begin{equation}
\label{split} \left\{\begin{array}{ll}
-\partial_tu-\frac{1}{2}\text{Trace}\left(\sigma\sigma^T(t,x)\partial_{xx}u\right)+H(\partial_xu)=0&\text{in}\  Q_T;\\
u(T,x)=U(x)&\text{in}\ \mathbb{R}^n.
\end{array}\right.
\end{equation}
In analogy to the deterministic case, classical arguments from
control theory imply the stochastic optimal control representation
$$u(t,x)=\inf_{q\in\mathbb{H}^{2}[t,T]}\mathbf{E}\left[\int_{t}^{T}L(q_{s})ds+U(X_{T}^{t,x;q})|\mathcal{F}_t\right],$$
with the controlled state equation
$X_{s}^{t,x;q}=x-\int_{t}^{s}q_{u}du+\int_{t}^{s}\sigma(u,X_{u}^{t,x;q})dW_{u}$,
for $s\in[t,T]$, and $\mathbb{H}^{2}[t,T]$ being the space of
square-integrable progressively measurable processes $q$.

Naturally, due to the stochasticity of the state $X^{t,x;q}$, the
Hopf-Lax formula (\ref{Hopf-Lax}) does not hold for the solution of
problem (\ref{split}). On the other hand, we observe that if we
still choose, as in the deterministic case, controls of the form
$\hat{q}_s=\frac{x-y}{T-t}$, for $y\in\mathbb{R}^n$ and $s\in[t,T]$,
then
$$X_s^{t,x;\hat{q}}=\frac{T-s}{T-t}x+\frac{s-t}{T-t}y+\int_t^s\sigma(u,X_u^{t,x;\hat{q}})dW_u,$$ for $s\in[t,T].$
Therefore, for $T-t=o(1)$, we have $X_{T}^{t,x;\hat{q}}\approx
Y_{T}^{t,{y}}$, where $Y^{t,{y}} $ solves the \emph{uncontrolled}
stochastic differential equation
$$Y_s^{t,y}=y+\int_t^{{s}}\sigma(u,{Y_u^{t,y}})dW_u,$$ for $s\in[t,T].$
In turn, since $y$ is arbitrary, we readily obtain an \emph{upper}
bound of the solution $u(t,x)$ of (\ref{split}), namely,
\begin{equation}\label{upperbound}
u(t,x)\leq
\inf_{y\in\mathbb{R}^n}\left\{(T-t)L(\frac{x-y}{T-t})+\mathbf{E}[U(Y_{T}^{t,y})|\mathcal{F}_t]\right\}.
\end{equation}

Furthermore, the convexity of $H$ yields that $L$ is also convex
and, therefore, for any control process $q\in\mathbb{H}^2[t,T]$, we
deduce that
\begin{align*}
&\ \mathbf{E}\left[\int_{t}^{T}L(q_{s})ds+U(X_{T}^{t,x;q})|\mathcal{F}_t\right]\\
\geq&\
(T-t)L\left(\mathbf{E}\left[\frac{1}{T-t}\int_t^Tq_udu|\mathcal{F}_t\right]\right)+\mathbf{E}[U(X_T^{t,x;q})|\mathcal{F}_t]\\
=&\
(T-t)L\left(\mathbf{E}\left[\frac{x-X_{T}^{t,x;q}+\int_t^{T}\sigma(u,X_u^{t,x;q})dW_u}{T-t}|\mathcal{F}_t\right]\right)+\mathbf{E}[U(X_T^{t,x;q})|\mathcal{F}_t]\\
=&\
(T-t)L\left(\frac{x-\mathbf{E}[X_{T}^{t,x;q}|\mathcal{F}_t]}{T-t}\right)+\mathbf{E}[U(X_T^{t,x;q})|\mathcal{F}_t].
\end{align*}
Therefore, for $T-t=o(1)$, we have $X_{T}^{t,x;q}\approx
Y_{T}^{t,\hat{y}}$, with
$\hat{y}:=\mathbf{E}[X_{T}^{t,x;q}|\mathcal{F}_t]$. Thus, we also
obtain a \emph{lower} bound of the solution $u(t,x)$ of
(\ref{split}), namely,
\begin{equation}\label{lowerbound}
u(t,x)\geq
\inf_{\hat{y}\in\mathbb{R}^n}\left\{(T-t)L(\frac{x-\hat{y}}{T-t})+\mathbf{E}[U(Y_{T}^{t,\hat{y}})|\mathcal{F}_t]\right\}.
\end{equation}
Note that when $\sigma$ degenerates to $0$, {inequalities
(\ref{upperbound}) and (\ref{lowerbound}) give us an equality, which
is precisely the Hopf-Lax formula (\ref{Hopf-Lax}).

We now see how the above ideas can be combined to develop an
approximation scheme for the original problem (\ref{PDE_1}).
Equation (\ref{PDE_1}) can be ``split" into a first-order nonlinear
equation of Hamilton-Jacobi type and a linear parabolic equation.
The solution of the former is represented via the Hopf-Lax formula
and corresponds to the value function of a deterministic control
problem. The solution of the latter corresponds to a conditional
expectation of an uncontrolled diffusion and is given by the
Feynman-Kac formula. The scheme is then naturally based on a
backwards in time recursive combination of the Hopf-Lax and the
Feynman-Kac formula; see (\ref{semigroupequation1}) and
(\ref{semischeme}) for further details.

We establish the convergence of the scheme to the unique (viscosity)
solution of (\ref{PDE_1}) and determine the rate of convergence. We
do this by deriving upper and lower bounds on the approximation
error (Theorems \ref{theorem_error_1} and \ref{theorem_error_2},
respectively). The main tools  come from the \emph{shaking
coefficients technique} introduced by Krylov \cite{Krylov1}
\cite{Krylov} and the \emph{optimal switching approximation}
introduced by Barles and Jakobsen \cite{BJ0} \cite{BJ}.

While various arguments follow from adaptations of these techniques,
the main difficulty is to derive a consistency error estimate. This
is one of the key steps herein and it is precisely where the
convexity of the Hamiltonian with respect to the gradient is used in
an essential way. Specifically, we obtain this estimate by applying
convex duality and using the properties of the optimizers in the
related minimization problems (Proposition \ref{semigroup} (vi)).
Using this estimate and the comparison result for the approximation
scheme (Proposition \ref{schemecomparison}), we in turn derive an
upper bound for the approximation error by perturbing the
coefficients of the equation. The lower bound for the approximation
error is obtained by another layer of approximation of the equation
by using an auxiliary optimal switching system.

Approximation schemes for viscosity solutions were first studied by
Barles and Souganidis \cite{Barles}, who showed that any monotone,
stable and consistent approximation scheme converges to the correct
solution, provided that there exists a comparison principle for the
limiting equation. The corresponding convergence rate had been an
open problem for a long time until late 1990s when Krylov introduced
the {shaking coefficients technique} to construct a sequence of
smooth subsolutions/supersolutions. This technique was further
developed by Barles and Jakobsen in a sequence of papers (see
\cite{BJ1} and \cite{Jakobsen} and more references therein), and
{has} recently been applied to solve various problems (see, among
others, \cite{Erhan} \cite{BPZ} \cite{FTW} and \cite{HL1}).

Krylov's technique depends crucially on the convexity/concavity of
the underlying equation with respect to its terms. As a result,
unless the approximate solution has enough regularity (so one can
interchange the roles of the approximation scheme and the original
equation), the {shaking coefficients technique} only gives either an
upper or a lower bound for the approximation error, but not both. A
further breakthrough was made by Barles and Jakobsen in \cite{BJ0}
and \cite{BJ}, who combined the ideas of optimal switching
approximation of Hamilton-Jacobi-Bellman (HJB) equations (initially
proposed by Evans and Friedman \cite{EF}) with the shaking
coefficients technique. They obtained both upper and lower bounds of
the {error estimate}, but with a lower {convergence rate} due to the
introduction of another approximation layer.

The splitting approach (fractional step, prediction and correction,
etc.) is dated back to Marchuk \cite{Marchuk} in the late 1960s. Its
application to nonlinear PDEs was firstly proposed by Lions and
Mercier \cite{Lions} and has been subsequently used by many others.
For semilinear parabolic equations related to problems in
mathematical finance, splitting methods have been applied by Tourin
\cite{Tourin} (see also more references therein). More recently,
Nadtochiy and Zariphopoulou \cite{NZ} proposed a splitting algorithm
to the marginal HJB equation arising in optimal investment problems
in a stochastic factor model and general utility functions.
Henderson and Liang \cite{HL1} proposed a splitting approach for
utility indifference pricing in a multi-dimensional non-traded
assets model with intertemporal default risk, and established its
convergence rate. Tan \cite{Tan} proposed a splitting method for a
class of fully nonlinear degenerate parabolic PDEs and applied it to
Asian options and commodity trading.

{Finally, we mention that most of the existing algorithms (see,
among others, Howard's finite difference scheme \cite{BMZ}) provide
approximations only at certain time grids. In contrast, the
splitting approximation can be used to approximate the solution at
any time point. Furthermore, since the existing algorithms are often
based on finite difference approximation, the ``curse of
dimensionality" issue arises. We remark that the splitting
approximation itself does not involve finite difference formulation, {as long as one can find an efficient way to compute conditional
expectations, e.g. the multi-level Monte Carlo approach \cite{MLMC_method}, the least squares Monte Carlo approach \cite{LS_method}, the cubature approach \cite{cubature_method}, and etc.} This advantage is also shared by existing BSDE time
discretization algorithms (see, for example, \cite{Touzi2} and
\cite{CR}). However, the commonly used BSDE time discretization
algorithms for (\ref{PDE_1}) require that the Hamiltonian has the
form $H(t,x,\sigma^{tr}(t,x)\partial_xu)$ (see \cite{CR}), which is
not the case herein. Indeed, we do not require the last variable in
the Hamiltonian $H$ to depend on the diffusion coefficient
$\sigma$.}

The paper is organized as follows. In section 2 we introduce the
approximation scheme. In section 3, we prove its convergence rate
using the shaking coefficients technique and optimal switching
approximation. We provide a numerical test in section 4 and conclude
in section 5. Some technical proofs are provided in the appendix.

%%%%%%%%%%%%%%%%%%%%%%%%%%%%%%%%%%%%%%%%%%%%%%%%%%%%%%%%5
%%%%%%%%%%%%%%%%%%%%%%%%%%%%%%%%%%%%%%%%%%%%%%%%%%%%%%%%%

\section{The approximation scheme using the Hopf-Lax formula and splitting} \label{sec-model}

%%%%%%%%%%%%%%%%%%%%%%%%%%%%%%%%%%%%%%%%%%%%%%%%%%%%%%%%%%%%%%%%%%%5

For $T>0$, let $Q_T=[0,T)\times\mathbb{R}^n$. Let also $d$ be a
positive integer and $\delta>0$. For a function $f:Q_T\to
\mathbb{R}^{d}$, we introduce its (semi)norms
$$|f|_{0}:=\sup_{(t,x)\in Q_T}|f(t,x)|,$$
$$[f]_{1,\delta}:=\sup_{\substack{ (t,x),(t',x)\in Q_T \\t\neq t'}}\frac{|f(t,x)-f(t',x)|}{|t-t'|^{\delta}},\ \ \
[f]_{2,\delta}:=\sup_{\substack{ (t,x),(t,x')\in Q_T \\ x\neq x'
}}\frac{|f(t,x)-f(t,x')|}{|x-x'|^{\delta}}.$$ Furthermore,
$[f]_{\delta}:=[f]_{1,\delta/2}+[f]_{2,\delta}$ and
$|f|_{\delta}:=|f|_{0}+[f]_{\delta}$. Similarly, the (semi)norms of
a function $g:\mathbb{R}^n\to\mathbb{R}^{d}$ are defined as
$$|g|_{0}:=\sup_{x\in \mathbb{R}^n}|g(x)|,\ \ \ [g]_{\delta}:=\sup_{\substack{ x,x'\in \mathbb{R}^n \\ x\neq x' }}\frac{|g(x)-g(x')|}{|x-x'|^{\delta}},\ \ \ |g|_{\delta}:=|g|_{0}+[g]_{\delta}.$$
%Note that all the above norms satisfy the usual triangle inequality, and that
%\begin{equation}\label{norm}
%|f|_{\delta}=\sup_{t\in I}|f(t,\cdot)|_{\delta}+[f]_{1,\delta/2}
%\end{equation}

For $S={Q}_T$, $\mathbb{R}^n$ or {$Q_{T}\times \mathbb{R}^{n}$}, we
denote by $\mathcal{C}(S)$ the space of continuous functions on $S$,
and by $\mathcal{C}_b^\delta(S)$ the space of bounded and continuous
functions on $S$ with finite norm $|f|_{\delta}$. We also set
$\mathcal{C}_b^0(S)\equiv\mathcal{C}_b(S)$ and denote by
$\mathcal{C}_b^{\infty}(S)$ the space of smooth functions on $S$
with bounded derivatives of any order.

We  throughout assume the following conditions for equation
(\ref{PDE_1}).

%For $T>0$, we aim to numerically solve the following semilinear PDE
%in the domain $Q_T$:
%\begin{equation}\label{PDE_100}
%-\partial_tu(t,x)-\frac{1}{2}\text{Trace}\left(\sigma(t,x)\sigma^T(t,x)\partial_{xx}u(t,x)\right)-b(t,x)\cdot\partial_xu(t,x)+H(t,x,\partial_xu(t,x))=0,
%\end{equation}
%with terminal condition
%\begin{equation}\label{terminal00}
%u(T,x)=U(x).
%\end{equation}

\begin{assumption}\label{data assumption}

(i) The diffusion coefficient $\sigma\in\mathcal{C}_{b}^{1}(Q_T)$,
the drift coefficient $b\in\mathcal{C}_b^1(Q_T)$, and the terminal
datum $U\in\mathcal{C}_b^1(\mathbb{R}^n)$ have norms $|\sigma|_{1},
|b|_{1}, |U|_{1}\le M$, for some $M>0$.

(ii) The Hamiltonian $H(t,x,p)\in \mathcal{C}(Q_{T}\times\mathbb{R}^n)$ is %first order differentiable,
convex in p, and satisfies the coercivity condition
$$\lim_{|p|\rightarrow\infty}\frac{H(t,x,p)}{|p|}=\infty,$$
uniformly in $(t,x)\in Q_{T}$. Moreover, for every $p$,
$[H(\cdot,\cdot,p)]_{1}\le M$, and there exist two locally bounded
functions $H^{*}$ and $H_{*}:\mathbb{R}^{n}\to\mathbb{R}$ such that
$$H_{*}(p)=\inf_{(t,x)\in Q_{T}}H(t,x,p)\ \ \text{and}\ \
H^{*}(p)=\sup_{(t,x)\in Q_{T}}H(t,x,p).$$
\end{assumption}

Under the above assumptions, we have the following existence,
uniqueness and regularity results for equation (\ref{PDE_1}). Their
proofs are provided in Appendix A.

\begin{proposition}\label{solutionproperty} Suppose that Assumption \ref{data assumption} is
satisfied. Then, there exists a unique viscosity solution
$u\in\mathcal{C}_b^1(\bar{Q}_T)$ of equation (\ref{PDE_1}), with
$|u|_{1}\le C$, for some constant $C$ depending only on $M$ and $T$.
\end{proposition}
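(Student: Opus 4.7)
The plan is to recast (\ref{PDE_1}) as the Hamilton--Jacobi--Bellman equation of a stochastic optimal control problem via convex duality, define $u$ as the corresponding value function, establish bounds and regularity from the control representation, and conclude uniqueness by a comparison argument. Since $H(t,x,\cdot)$ is convex and coercive, its Legendre transform $L(t,x,q):=\sup_{p\in\mathbb{R}^n}\{p\cdot q-H(t,x,p)\}$ is finite-valued, and the Fenchel--Moreau theorem yields $H(t,x,p)=\sup_q\{p\cdot q-L(t,x,q)\}$. I would therefore take
$$u(t,x):=\inf_{q\in\mathbb{H}^{2}[t,T]}\mathbf{E}\left[\int_t^T L(s,X_s^{t,x;q},q_s)\,ds+U(X_T^{t,x;q})\right],$$
with $dX_s^{t,x;q}=(b(s,X_s^{t,x;q})-q_s)\,ds+\sigma(s,X_s^{t,x;q})\,dW_s$ and $X_t^{t,x;q}=x$, as the candidate solution.

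For the $L^\infty$ bound, note that $L(t,x,0)=-\inf_p H(t,x,p)\le -\min_p H_*(p)$ is finite because $H_*$ is continuous and coercive (coercivity being inherited from the uniform-in-$(t,x)$ assumption on $H$), while $L(t,x,q)\ge -H(t,x,0)\ge -|H^*(0)|$ for every $q$; together with $|U|_0\le M$, these give $|u|_0\le C(M,T)$ (using $q\equiv 0$ for the upper bound). For spatial Lipschitz regularity, I would use a synchronous-coupling argument: for any admissible $q$, Gronwall's inequality applied to $X^{t,x;q}-X^{t,x';q}$ combined with $|\sigma|_1,|b|_1\le M$ gives $\mathbf{E}|X_s^{t,x;q}-X_s^{t,x';q}|\le e^{CT}|x-x'|$, after which Lipschitz continuity of $U$ and the uniform-in-$q$ Lipschitz-in-$x$ regularity of $L(\cdot,\cdot,q)$, inherited from $[H(\cdot,\cdot,p)]_{2,1}\le M$ by taking the supremum over $p$, yield $[u]_{2,1}\le C$. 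The H\"older-$1/2$ continuity in $t$ then follows from the dynamic programming principle combined with the $L^\infty$ and spatial Lipschitz bounds on $u$ and the analogous time regularity of the coefficients.

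With $|u|_1\le C$ established, the fact that $u$ is a viscosity solution of (\ref{PDE_1}) follows by a standard DPP argument. For uniqueness, since the same a priori estimates apply to any $\mathcal{C}_b^1(\bar{Q}_T)$-viscosity solution $v$, both $u$ and $v$ have gradients valued in a fixed compact set; one can then modify $H$ outside $\{|p|\le C+1\}$ to obtain a convex Hamiltonian $\tilde{H}$ that agrees with $H$ on $\{|p|\le C\}$ and is globally Lipschitz in $p$. For the modified equation the standard comparison principle for second-order parabolic equations with Lipschitz nonlinearities applies, yielding $u=v$. The main obstacle is the spatial Lipschitz estimate on $u$: because coercivity precludes a bounded $\nabla_p H$, the classical HJB Lipschitz estimates cannot be invoked directly, and one must exploit the uniform-in-$p$ regularity of $H$ in $(t,x)$ from Assumption \ref{data assumption} (ii) to transfer the regularity to $L$ via convex duality uniformly in the control $q$.
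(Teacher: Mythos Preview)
Your proposal is correct and shares the paper's starting point---the control representation via the Legendre dual $L$ and the $L^\infty$ bounds from $q\equiv 0$ and $L\ge -|H^*(0)|$---but diverges from the paper for regularity and uniqueness. The paper does \emph{not} use probabilistic coupling: it invokes a PDE continuous-dependence estimate (Lemma~\ref{Ptbedproperty}, following \cite{Jakobsen}) and obtains the $x$-Lipschitz bound by comparing $u$ with its own spatial translate, then gets the $t$-H\"older bound by a barrier argument (mollify the terminal datum at an intermediate time $s$, sandwich the mollified solution between $u_{\varepsilon'}(s,x)\pm(s-t)C_{\varepsilon'}$, and optimize $\varepsilon'=\sqrt{|s-t|}$). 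Uniqueness in the paper is a direct corollary of the same continuous-dependence lemma and therefore holds among all bounded viscosity solutions, not only those already known to lie in $\mathcal{C}_b^1$.

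Your route---synchronous coupling for $[u]_{2,1}$, DPP for $[u]_{1,1/2}$, and truncation of $H$ for comparison---is more elementary and avoids importing the continuous-dependence machinery, but buys slightly less: your uniqueness argument, as written, compares only $\mathcal{C}_b^1$ solutions (since you need a priori gradient bounds on \emph{both} candidates to truncate $H$), whereas the paper's argument needs regularity of only one of the two. Conversely, the paper's investment in Lemma~\ref{Ptbedproperty} pays off later, since the same lemma is reused verbatim to prove the perturbation estimate~(\ref{est2}) in Proposition~\ref{upperbound_1}. One point to make explicit in your DPP step for time regularity: the lower bound on $u(t,x)-u(s,x)$ requires balancing $\mathbf{E}\int_t^s L(\cdot,\cdot,q_r)\,dr$ against $[u]_{2,1}\mathbf{E}|X_s^{t,x;q}-x|$, and the latter contains $\mathbf{E}\int_t^s|q_r|\,dr$, which is controlled only because the coercivity of $L_*$ dominates the linear growth $[u]_{2,1}|q|$; this is the place where coercivity enters and should be stated.
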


%%%%%%%%%%%%%%%%%%%%%%%%%%%%%%%%%%%%%%%%%%%%%%%%%%%%%%%%%%%%%%%%%%%%5
\subsection{The backward operator $\mathbf{S}_t(\Delta)$}

Using that $H(t,x,p)$ is convex in $p$, we define its Legendre
(convex dual) transform
$L:Q_{T}\times\mathbb{R}^n\rightarrow\mathbb{R}$, given by
\begin{equation}\label{L}
L(t,x,q):=\sup_{p\in\mathbb{R}^n}\{p\cdot q-H(t,x,p)\}.
\end{equation}

For any $t$ and $\Delta$ with $0\leq t<t+\Delta \leq T$, and any
$\phi\in\mathcal{C}_b(\mathbb{R}^{n})$, we introduce the
\emph{backward operator}
$\mathbf{S}_{t}(\Delta):\mathcal{C}_b(\mathbb{R}^{n})\to\mathcal{C}_b(\mathbb{R}^{n})$,
\begin{equation}\label{semigroupequation1}
\left\{
\begin{array}{ll}
\displaystyle \mathbf{S}_{t}(\Delta)\phi(x)=\min_{y\in\mathbb{R}^n} \left\{\Delta L\left(t,x,\frac{x-y}{\Delta}\right)+\mathbf{E}[\phi(Y_{t+\Delta}^{t,y})|\mathcal{F}_t]\right\},& x\in\mathbb{R}^n,\\
\displaystyle Y_{s}^{t,y}=y+ b(t,y)(s-t)+\sigma(t,y)(W_{s}-W_t),&
s\in[t,t+\Delta],
\end{array}
\right.
\end{equation}
on a filtered probability space
$(\Omega,\mathcal{F},\{\mathcal{F}_t\}_{t\geq 0},\mathbf{P})$, where
$W$ is an $n$-dimensional Brownian motion with its augmented
filtration $\{\mathcal{F}_t\}_{t\geq 0}$.\\

We start with some auxiliary properties of $H$ and $L$.

\begin{proposition}\label{transformlemma}
Suppose that Assumption \ref{data assumption} (ii) is satisfied.
Then, the following assertions hold:

(i) $H$ is the Legendre transform of $L$, i.e.
$H(t,x,p)=\sup_{q\in\mathbb{R}^n}\{p\cdot q-L(t,x,q)\},$ for
$(t,x)\in Q_T$.

(ii) The functions  $$L_{*}(q):=\sup_{p\in\mathbb{R}^n}\{p\cdot
q-H^{*}(p)\}\ \ \text{and}\ \
L^{*}(q):=\sup_{p\in\mathbb{R}^n}\{p\cdot q-H_{*}(p)\}$$ are locally
bounded and satisfy, for $(t,x)\in Q_{T}$, $L_{*}(q)\leq
L(t,x,q)\leq L^{*}(q).$

(iii) For $(t,x)\in Q_T$, $L(t,x,q)$ is convex in $q$ with
$[L(\cdot,\cdot,q)]_{1}\le 2M$. Furthermore, it satisfies the
coercivity condition
$$\lim_{|q|\rightarrow\infty}\frac{L(t,x,q)}{|q|}=\infty,$$
uniformly in $(t,x)\in Q_{T}$.

(iv) For each $(t,x)\in Q_{T}$ and $p, q\in\mathbb{R}^{n}$, there
exist $p^{*}, q^{*}\in\mathbb{R}^{n}$ such that
$$L(t,x,q)=q\cdot p^{*}-H(t,x,p^{*})\ \ \text{and}\ \ H(t,x,p)=p\cdot q^{*}-L(t,x,q^{*}).$$
Furthermore, $|p^{*}|\leq \xi(|q|)$ and $|q^{*}|\leq \xi(|p|),$ for
some real-valued increasing function $\xi(\cdot)$ independent of
$(t,x)$.
\end{proposition}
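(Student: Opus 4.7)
The plan is to prove the four parts in order, since each builds on the preceding ones, and to push all estimates onto the $(t,x)$-free envelopes $H_*,H^*,L_*,L^*$ so that every bound is uniform in $(t,x)\in Q_T$. Part (i) is a direct application of the Fenchel--Moreau theorem, since Assumption~\ref{data assumption}(ii) makes $H(t,x,\cdot)$ convex, continuous, and coercive, hence proper and lower semicontinuous, so $H=L^{**}$. For (ii), the pointwise sandwich $H_*\le H(t,x,\cdot)\le H^*$ reverses under the Legendre transform to $L_*(q)\le L(t,x,q)\le L^*(q)$. Local boundedness of $L_*,L^*$ then reduces to verifying coercivity of $H_*$ and $H^*$, which I inherit from the uniform coercivity of $H$ (since $H^*(p)/|p|\ge \inf_{(t,x)}H(t,x,p)/|p|\to\infty$, and similarly for $H_*$); a standard test of the sup with $p=Rq/|q|$ together with convexity of $L_*,L^*$ finishes.

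Part (iii) splits into three items. Convexity of $L(t,x,\cdot)$ in $q$ is immediate as a supremum of affine functions. To estimate $[L(\cdot,\cdot,q)]_1$, I pick a maximizer $p^*$ of $p\cdot q-H(t,x,p)$ (existence furnished by part (iv)) and exploit that the sup for a different pair $(t',x')$ is at least as large as the value at this same $p^*$:
\begin{align*}
L(t,x,q)-L(t',x',q)&\le H(t',x',p^*)-H(t,x,p^*)\\
&\le M\bigl(|t-t'|^{1/2}+|x-x'|\bigr),
\end{align*}
using $[H(\cdot,\cdot,p)]_1\le M$ uniformly in $p$; swapping $(t,x)$ with $(t',x')$ and splitting into the time and space pieces according to the convention $[\,\cdot\,]_1=[\,\cdot\,]_{1,1/2}+[\,\cdot\,]_{2,1}$ accounts for the factor $2M$. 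Uniform coercivity of $L$ then follows from $L(t,x,q)\ge L_*(q)$ combined with the coercivity of $L_*$ established in (ii).

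For (iv), existence of maximizers $p^*,q^*$ is routine: $p\mapsto p\cdot q-H(t,x,p)$ is continuous and tends to $-\infty$ by coercivity of $H$, so the sup is attained, and the dual statement for $q^*$ uses (i) and (iii). The main obstacle, and the real content of the proposition, is the uniform bound $|p^*|\le\xi(|q|)$. I would combine
\[
p^*\cdot q-H(t,x,p^*)=L(t,x,q)\ge L_*(q),\qquad H_*(p^*)\le H(t,x,p^*),
\]
to obtain $H_*(p^*)\le |p^*|\,|q|+|L_*(q)|$; coercivity of $H_*$ then forces $|p^*|$ to lie in a ball whose radius depends only on $|q|$ and on $|L_*(q)|$, the latter itself controlled by $|q|$ via local boundedness, and this defines $\xi$. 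The symmetric bound on $q^*$ is obtained by the same argument applied, via (i), to the dual representation $H(t,x,p)=\sup_q\{p\cdot q-L(t,x,q)\}$, using coercivity of $L_*$ in place of that of $H_*$.
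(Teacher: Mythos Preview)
Your proposal is correct and follows essentially the same approach as the paper: both treat (i)--(ii) as immediate consequences of Fenchel--Moreau and the order-reversing property of the Legendre transform, derive convexity and the $[L]_1\le 2M$ bound in (iii) from the sup-of-$H$-values structure, obtain coercivity of $L$ from the uniform coercivity of $H$, and deduce the uniform control on the maximizers in (iv) from coercivity. The only cosmetic differences are that the paper bounds $[L(\cdot,\cdot,q)]_1$ via the generic inequality $|\sup_p f_p-\sup_p g_p|\le\sup_p|f_p-g_p|$ rather than invoking a maximizer (thus avoiding your forward reference to (iv), which is harmless since the existence of $p^*$ uses only coercivity of $H$), and in (iv) the paper shows directly that $p\cdot q-H(t,x,p)<-H(t,x,0)\le L(t,x,q)$ once $|p|$ is large, whereas you route the same estimate through $H_*$ and $L_*$.
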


\begin{proof}
Parts (i) and (ii) are immediate and, thus, we only prove (iii) and
(iv).

(iii) For fixed $(t,x)\in Q_{T}$, $q_{1},q_{2}\in\mathbb{R}^{n}$ and
$\lambda\in[0,1]$, we have
\begin{align*}
L(t,x,\lambda q_{1}+(1-\lambda)q_{2}) = &\   \sup_{p\in\mathbb{R}^n}\{(\lambda q_{1}+(1-\lambda)q_{2})\cdot p-H(t,x,p)\} \\
\le &\
 \lambda\sup_{p\in\mathbb{R}^n}\{q_{1}\cdot p-H(t,x,p)\}
 +
 (1-\lambda)\sup_{p\in\mathbb{R}^n}\{q_{2}\cdot p-H(t,x,p)\} \\
 = &\
 \lambda L(t,x,q_{1})+(1-\lambda) L(t,x,q_{2}).
\end{align*}
From the definition of $L$,  we further have, for any
$q\in\mathbb{R}^n$,
\begin{align*}
[L(\cdot,\cdot,q)]_{1}= &\  [L(\cdot,\cdot,q)]_{1,1/2}+[L(\cdot,\cdot,q)]_{2,1} \\
 \le &\
 \sup_{p}\{[H(\cdot,\cdot,p)]_{1,1/2}\}+\sup_{p}\{[H(\cdot,\cdot,p)]_{2,1}\}
 \le
 2M.
\end{align*}
Next, for any $K>0$, we deduce, by setting $p=K\frac{q}{|q|}$, that
\begin{equation*}
L(t,x,q)\ge q\cdot K\frac{q}{|q|}-H(t,x,K\frac{q}{|q|})  \ge
K|q|-\sup_{r\in B(0,K)}H^{*}(r).
\end{equation*}
Dividing both sides by $|q|$ and sending $|q|\to\infty$, the
coercivity condition for $L$ follows.

(iv) From (i) and (ii), we deduce that $L$ and $H$ are symmetric to
each other and, thus, we only establish the assertions for $L$. To
this end, for each $(t,x)\in Q_{T}$, we obtain, by setting $p=0$ in
(\ref{L}), that $L(t,x,q)\ge -H(t,x,0).$ Therefore, it suffices to
find a real-valued increasing function, say $\xi(\cdot)$, such that,
if $|p|>\xi(|q|)$, then
$$p\cdot q-H(t,x,p)<-H(t,x,0).$$
Indeed, it follows from Assumption \ref{data assumption} (ii) that
there exists a real-valued increasing function, say $K_{H}(y)$, such
that, for any $(t,x)\in Q_{T}$ and $|p|\ge K_{H}(y)$, we have
$\frac{H(t,x,p)}{|p|}\ge y$. Setting
$\xi(x):=\max\{K_{H}(|H^{*}(0)|+x),1\}$, we deduce that, for
$|p|>\xi(|q|)$,
\begin{align*}
p\cdot q-H(t,x,p) \le &\ |p|(|q|-\frac{H(t,x,p)}{|p|})<
|q|-(|H^{*}(0)|+|q|) \le -H(t,x,0),
\end{align*}
and we easily conclude.
\end{proof}\\

Next, we show that the minimum in (\ref{semigroupequation1}) is
actually achieved, i.e. for any
$\phi\in\mathcal{C}_{b}(\mathbb{R}^{n})$, there always exists an
associated minimizer $y^{*}$.

\begin{proposition}\label{minimiser}
Suppose that Assumption \ref{data assumption} is satisfied. Then,
for each $t$ and $\Delta$ with $0\leq t<t+\Delta \leq T$,
$x\in\mathbb{R}^{n}$ and $\phi\in\mathcal{C}_b(\mathbb{R}^{n})$,
there exists a minimizer $y^{*}\in\mathbb{R}^{n}$ such that
$$\mathbf{S}_{t}(\Delta)\phi(x)=\Delta L\left(t,x,\frac{x-y^{*}}{\Delta}\right)+\mathbf{E}[\phi(Y^{t,y^{*}}_{t+\Delta})|\mathcal{F}_t].$$
Moreover, there exists a constant $C>0$, depending only on $M$ and
$T$, such that
\begin{equation}\label{inequ}
\left|\frac{x-y^{*}}{\Delta}\right|\le \xi(C[\phi]_{1}),
\end{equation}
%\mathbf{R}(C[\phi]_1),$$
for some real-valued increasing function $\xi(\cdot)$ independent of
$(t,x)$.
\end{proposition}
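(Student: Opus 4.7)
First I would establish existence. Set $F(y) := \Delta L(t,x,(x-y)/\Delta) + \mathbf{E}[\phi(Y_{t+\Delta}^{t,y})|\mathcal{F}_t]$ for $y\in\mathbb{R}^n$. Continuity of $F$ follows from continuity of $L$ and of $\phi$, together with the pathwise Lipschitz dependence of $Y_{t+\Delta}^{t,y}$ on $y$ (immediate from the explicit SDE form and Assumption \ref{data assumption}(i)). The conditional-expectation term is bounded by $|\phi|_0$, while the uniform coercivity of $L$ in Proposition \ref{transformlemma}(iii) forces $\Delta L(t,x,(x-y)/\Delta)\to\infty$ as $|y|\to\infty$. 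Hence $F$ is coercive on $\mathbb{R}^n$ and attains its infimum at some $y^*$.

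Next, I would establish the Lipschitz estimate for $h(y) := \mathbf{E}[\phi(Y_{t+\Delta}^{t,y})|\mathcal{F}_t]$. From $Y_{t+\Delta}^{t,y} = y + b(t,y)\Delta + \sigma(t,y)(W_{t+\Delta}-W_t)$ and Assumption \ref{data assumption}(i),
$$|Y_{t+\Delta}^{t,y_1}-Y_{t+\Delta}^{t,y_2}|\leq |y_1-y_2|\bigl(1+M\Delta+M|W_{t+\Delta}-W_t|\bigr),$$
so the $[\phi]_1$-Lipschitz continuity of $\phi$, combined with $\mathbf{E}|W_{t+\Delta}-W_t|\leq \sqrt{n\Delta}$, yields
$$|h(y_1)-h(y_2)|\leq C[\phi]_1 |y_1-y_2|,$$
with $C = C(M,T)$.

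For the quantitative bound, write $q^* := (x-y^*)/\Delta$ and test $F(y^*)\leq F(x)$:
$$\Delta L(t,x,q^*)+h(y^*)\leq \Delta L(t,x,0)+h(x).$$
Combining with $L(t,x,0)\leq L^*(0)$ from Proposition \ref{transformlemma}(ii) and the Lipschitz estimate $h(x)-h(y^*)\leq C[\phi]_1|x-y^*|=C[\phi]_1\Delta|q^*|$ reduces this to
$$L(t,x,q^*)\leq L^*(0)+C[\phi]_1|q^*|.$$
I would then invoke the uniform coercivity of $L$ (Proposition \ref{transformlemma}(iii)): with $\alpha:=C[\phi]_1+1$ there exists $R_\alpha$, independent of $(t,x)$, such that $L(t,x,q)\geq \alpha|q|$ whenever $|q|\geq R_\alpha$. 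If $|q^*|\geq R_\alpha$, the display above forces $|q^*|\leq L^*(0)$; in all cases $|q^*|\leq \xi(C[\phi]_1):=\max\{R_\alpha, L^*(0)\}$, which is increasing in $[\phi]_1$ and independent of $(t,x)$.

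The main difficulty is the last step: the minimality condition by itself does not constrain the dual variable $q^*$, since naive subgradient arguments only produce a one-sided bound on $L$ near $q^*$. The trick is to test against the convenient point $y=x$ (which reduces the $L$-contribution to $L(t,x,0)$, controlled by $L^*(0)$) and then to exploit the uniform super-linear growth of $L$ in $q$ to absorb the linear penalty $C[\phi]_1|q^*|$ generated by the Lipschitz term; existence and the Lipschitz estimate are then routine once the right structural ingredients have been identified.
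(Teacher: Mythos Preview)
Your proof is correct and follows essentially the same strategy as the paper: both arguments obtain existence from continuity plus the uniform coercivity of $L$, then compare the minimizer against the test point $y=x$, combine the resulting inequality with the Lipschitz estimate for $y\mapsto\mathbf{E}[\phi(Y^{t,y}_{t+\Delta})|\mathcal{F}_t]$, and finally invoke the uniform super-linear growth of $L$ to absorb the linear term $C[\phi]_1|q^*|$. The only cosmetic difference is that the paper phrases the last step contrapositively (showing $F(x-\Delta q)>F(x)$ whenever $|q|$ exceeds the threshold), whereas you work directly from $F(y^*)\le F(x)$; the two are equivalent, and your explicit $\xi$ matches the paper's $\xi(x)=\max\{K_L(|L^*(0)|+x),1\}$ up to harmless reparametrization.
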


\begin{proof}
Let $q=\frac{x-y}{\Delta}$. Then $|q|\to\infty$ as $|y|\to\infty$.
In turn, from Proposition \ref{transformlemma} (iii), we deduce
that, as $|y|\rightarrow\infty$,
$$ \Delta L\left(t,x,\frac{x-y}{\Delta}\right)+\mathbf{E}[\phi(Y^{t,y}_{t+\Delta})|\mathcal{F}_t]=|x-y|\frac{L(t,x,q)}{|q|}+\mathbf{E}[\phi(Y^{t,y}_{t+\Delta})|\mathcal{F}_t]\rightarrow\infty.$$
Furthermore, using that the mapping $y\mapsto\Delta
L(t,x,\frac{x-y}{\Delta})+\mathbf{E}[\phi(Y^{t,y}_{t+\Delta})|\mathcal{F}_t]$
is continuous, we deduce that it must admit a minimizer
$y^{*}\in\mathbb{R}^{n}$.

Next, we prove inequality (\ref{inequ}). For
$\phi\in\mathcal{C}^{1}_b(\mathbb{R}^{n})$, following the same
reasoning as in the proof of Proposition \ref{transformlemma} (iv),
it suffices to find a real-valued increasing function $\xi(\cdot)$
such that
\begin{equation}\label{minimiserproof}
\Delta L\left(t,x,q\right)+\mathbf{E}[\phi(Y^{t,x-\Delta
q}_{t+\Delta})|\mathcal{F}_t]>\Delta
L\left(t,x,0\right)+\mathbf{E}[\phi(Y^{t,x}_{t+\Delta})|\mathcal{F}_t],
\end{equation}
if $|q|>\xi(C[\phi]_{1})$, for some constant $C>0$ depending only on
$M$ and $T$. To prove this, note that Assumption \ref{data
assumption} (i) on the coefficients $\sigma$ and $b$ implies that
\begin{align}\label{est1}
\mathbf{E}[\phi(Y^{t,x}_{t+\Delta})|\mathcal{F}_t]-\mathbf{E}[\phi(Y^{t,x-\Delta q}_{t+\Delta})|\mathcal{F}_t]&\ \le [\phi]_{1}\mathbf{E}\left[\left|Y^{t,x}_{t+\Delta}-Y^{t,x-\Delta q}_{t+\Delta}\right||\mathcal{F}_t\right] \nonumber \\
 &\ \le C[\phi]_{1}\Delta |q|.
\end{align}
On the other hand, from Proposition \ref{transformlemma} (iv), there
exists a real-valued increasing function, say $K_{L}(y)$, such that,
for any $(t,x)\in Q_{T}$ and $|q|\ge K_{L}(y)$, we have
$\frac{L(t,x,q)}{|q|}\ge y$. Setting
$\xi(x):=\max\{K_{L}(|L^{*}(0)|+x),1\}$, we deduce that, for
$|q|>\xi(C[\phi]_{1})$,
$$\frac{L(t,x,q)}{|q|}>|L^{*}(0)|+C[\phi]_{1}\ge \frac{L^{*}(0)}{|q|}+C[\phi]_{1}\ge \frac{L(t,x,0)}{|q|}+C[\phi]_{1}.$$
Using the above inequality, together with (\ref{est1}), we obtain
(\ref{minimiserproof}). Finally, the case $[\phi]_{1}=\infty$
follows trivially.
\end{proof}\\

Next, we derive some key properties of the backward operator
$\mathbf{S}_{t}(\Delta)$.

\begin{proposition}\label{semigroup} Suppose that Assumption \ref{data
assumption} is satisfied. Then, for each $t$ and $\Delta$ with
$0\leq t<t+\Delta \leq T$, the operator $\mathbf{S}_{t}(\Delta)$ has
the following properties:

(i) (Constant preserving) For any
$\phi\in\mathcal{C}_b(\mathbb{R}^{n})$ and $c\in\mathbb{R}$,
$$\mathbf{S}_{t}(\Delta)(\phi+c)=\mathbf{S}_{t}(\Delta)\phi+c.$$

(ii) (Monotonicity) For any
$\phi,\psi\in\mathcal{C}_b(\mathbb{R}^{n})$ with $\phi\geq \psi$,
$$\mathbf{S}_{t}(\Delta)\phi\geq\mathbf{S}_{t}(\Delta)\psi.$$

(iii) (Concavity) For any $\phi\in\mathcal{C}_b(\mathbb{R}^n)$,
$\mathbf{S}_{t}(\Delta)\phi$ is concave in $\phi$.

(iv) (Stability) For any $\phi\in\mathcal{C}_b(\mathbb{R}^{n})$,
$$|\mathbf{S}_{t}(\Delta)\phi|_{0}\le C\Delta+|\phi|_{0},$$
where $C=\max\left\{|L^{*}(0)|,|H^{*}(0)|\right\}$, with $L^*$ and
$H^*$ as in Proposition \ref{transformlemma} (ii) and Assumption
\ref{data assumption} (ii). Therefore, the operator
$\mathbf{S}_{t}(\Delta)$ is indeed a mapping from
$\mathcal{C}_b(\mathbb{R}^{n})$ to $\mathcal{C}_b(\mathbb{R}^{n})$.

(v) For any $\phi\in\mathcal{C}^{1}_b(\mathbb{R}^{n})$, there exists
a constant $C$ depending only on $[\phi]_1$, $M$ and $T$, such that
$$|\mathbf{S}_{t}(\Delta)\phi-\phi|_{0}\le C\sqrt{\Delta}.$$

(vi) For any $\phi\in\mathcal{C}_b^{\infty}(\mathbb{R}^{n})$, define
\begin{equation}\label{operator_E}
\mathcal{E}(t,\Delta,\phi):=
\left|\frac{\phi-\mathbf{S}_{t}(\Delta)\phi}{\Delta}-{\mathbf{L}}_{t}\phi\right|_{0},
\end{equation}
where the operator $\mathbf{L}_{t}$ is given by
$$\mathbf{L}_{t}\phi(x)=-\frac{1}{2}\text{Trace}\left(\sigma\sigma^T(t,x)\partial_{xx}\phi(x)\right)-b(t,x)\cdot\partial_x\phi(x)+H(t,x,\partial_x\phi(x)).$$
Then,
\begin{equation*}
\mathcal{E}(t,\Delta,\phi)\leq \ C\Delta
\left(|\partial_{xxxx}\phi|_0+\mathcal{R}(\phi)\right),
\end{equation*}
where the constant $C$ depends only on $[\phi]_{1}$, $M$ and $T$,
and $\mathcal{R}(\phi)$ represents the ``insignificant'' terms
containing the derivatives of $\phi$ up to third order.
\end{proposition}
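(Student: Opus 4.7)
The plan is to establish the matching two-sided bound
$$\bigl|\mathbf{S}_{t}(\Delta)\phi(x) - \phi(x) + \Delta\,\mathbf{L}_{t}\phi(x)\bigr|\,\le\, C\Delta^{2}\bigl(|\partial_{xxxx}\phi|_{0} + \mathcal{R}(\phi)\bigr)$$
uniformly in $x$, with $C$ depending only on $[\phi]_{1}$, $M$ and $T$; dividing by $\Delta$ and taking the supremum in $x$ will then deliver the stated consistency estimate. The common ingredient is an It\^o--Taylor expansion of $\mathbf{E}[\phi(Y_{t+\Delta}^{t,y})]$ around $y$, combined with a judicious choice of the parameter $q=(x-y)/\Delta$ that forces the Legendre pair $(H,L)$ to surface in the right form.

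For the expansion, write $\mathcal{A}_{t,y}\phi(z):=\frac{1}{2}\text{Trace}\bigl(\sigma\sigma^{T}(t,y)\partial_{xx}\phi(z)\bigr)+b(t,y)\cdot\partial_{x}\phi(z)$ for the generator of the frozen-coefficient diffusion $Y^{t,y}$. Iterating It\^o's formula, first on $\phi(Y^{t,y})$ and then on $\mathcal{A}_{t,y}\phi(Y^{t,y})$, yields
$$\mathbf{E}[\phi(Y_{t+\Delta}^{t,y})] \,=\, \phi(y)+\Delta\,\mathcal{A}_{t,y}\phi(y)+\int_{t}^{t+\Delta}\!\!\int_{t}^{s}\mathbf{E}\bigl[\mathcal{A}_{t,y}^{2}\phi(Y_{r}^{t,y})\bigr]\,dr\,ds.$$
Since $|\sigma|_{1},|b|_{1}\le M$ and $\mathcal{A}_{t,y}^{2}\phi$ involves only derivatives of $\phi$ up to fourth order, the double-integral remainder is bounded by $C\Delta^{2}(|\partial_{xxxx}\phi|_{0}+\mathcal{R}(\phi))$ uniformly in $y$. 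Substituting $y=x-\Delta q$ and Taylor-expanding $\phi(y)$, $\sigma(t,y)$ and $b(t,y)$ around $x$ then produces, for every $q\in\mathbb{R}^{n}$,
$$\Delta L(t,x,q)+\mathbf{E}[\phi(Y_{t+\Delta}^{t,x-\Delta q})] \,=\, \phi(x)+\Delta\bigl[L(t,x,q)-q\cdot\partial_{x}\phi(x)\bigr]+\Delta\,\mathcal{A}_{t,x}\phi(x)+\rho(t,x,q,\Delta),$$
with $|\rho|\le C\Delta^{2}(1+|q|^{2})(|\partial_{xxxx}\phi|_{0}+\mathcal{R}(\phi))$.

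For the \emph{upper} bound, I would insert the specific $q=q^{**}$ supplied by Proposition \ref{transformlemma} (iv) for $p=\partial_{x}\phi(x)$: this $q^{**}$ realizes $L(t,x,q^{**})-q^{**}\cdot\partial_{x}\phi(x)=-H(t,x,\partial_{x}\phi(x))$ and satisfies $|q^{**}|\le\xi(|\partial_{x}\phi|_{0})\le\xi([\phi]_{1})$, a bound independent of $x$. The bracketed term then collapses exactly to $-H(t,x,\partial_{x}\phi(x))$ and the right-hand side reduces to $\phi(x)-\Delta\mathbf{L}_{t}\phi(x)+O(\Delta^{2})$. For the matching \emph{lower} bound, I would evaluate $\mathbf{S}_{t}(\Delta)\phi(x)$ at its own minimizer $y^{*}$ and set $q^{*}=(x-y^{*})/\Delta$; Proposition \ref{minimiser} supplies the crucial a priori bound $|q^{*}|\le\xi(C[\phi]_{1})$, so the $(1+|q|^{2})$ factor in the remainder remains bounded uniformly in $(t,x,\Delta)$. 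The Legendre-duality inequality $L(t,x,q^{*})-q^{*}\cdot\partial_{x}\phi(x)\ge -H(t,x,\partial_{x}\phi(x))$ (Proposition \ref{transformlemma} (i)) then delivers $\mathbf{S}_{t}(\Delta)\phi(x)\ge\phi(x)-\Delta\mathbf{L}_{t}\phi(x)-O(\Delta^{2})$.

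The main obstacle is precisely this uniformity in $x$: without an a priori bound on the optimizer $q^{*}$, the $|q|^{2}$ factor in $\rho$ could blow up pointwise and destroy the $O(\Delta^{2})$ remainder. Convexity and coercivity of $H$ alone do not provide such control; it is the bound from Proposition \ref{minimiser}, itself a consequence of the coercivity of $L$ paired with the finiteness of $[\phi]_{1}$, that makes the consistency error genuinely uniform. The remainder of the work is bookkeeping with Taylor expansions and standard SDE moment estimates.
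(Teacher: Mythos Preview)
Your proposal is correct and follows essentially the same route as the paper's proof of part (vi): for the upper bound on $\mathbf{S}_{t}(\Delta)\phi$ you plug in the Legendre-duality maximizer $q^{**}$ from Proposition \ref{transformlemma}(iv), and for the lower bound you evaluate at the scheme's own minimizer $y^{*}$ and invoke the a~priori bound $|q^{*}|\le\xi(C[\phi]_{1})$ from Proposition \ref{minimiser} together with the Fenchel--Young inequality, with It\^o/Taylor expansions handling the remainders. The paper organizes the computation slightly differently (expanding $(I)-(II)$ directly rather than packaging a single remainder $\rho$), but the two key choices of $q$ and the identification of the optimizer bound as the crux are identical.
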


\begin{proof} Parts (i)-(iii) are immediate. We only prove (iv)-(vi) and, in particular, for the case $n=1$, since the general
case follows along similar albeit more complicated arguments. %To prove (iv), we need only to prove for any $x\in\mathbb{R}^n$, $\frac{\phi(x)-\mathbf{S}_{t}(\Delta)\phi(x)}{\Delta}\rightarrow{\mathbf{L}}_{t}\phi(x)$, as $\Delta\rightarrow 0$.

(iv) Choosing $y=x$ in (\ref{semigroupequation1}) gives
\begin{align}\label{estimate_1}
\mathbf{S}_{t}(\Delta)\phi(x) \leq \Delta L^{*}(0)+|\phi|_{0}.
\end{align}
It follows from the definition of $L_{*}$ in Proposition
\ref{transformlemma} (ii) that $L_{*}(q)\ge -H^{*}(0) \ge
-|H^{*}(0)|$, for $q\in\mathbb{R}^{n}$. In turn, Proposition
\ref{minimiser} further yields
\begin{align}\label{estimate_2}
\mathbf{S}_{t}(\Delta)\phi(x)= &\
\Delta L(t,x,\frac{x-y^{*}}{\Delta})+\mathbf{E}[\phi(Y^{t,y^{*}}_{t+\Delta})|\mathcal{F}_t] \notag\\
\ge &\
\Delta L_{*}(\frac{x-y^{*}}{\Delta})-|\phi|_{0} \notag\\
\ge &\ -\Delta |H^{*}(0)|-|\phi|_{0}.
\end{align}
The assertion then follows by combining (\ref{estimate_1}) and
(\ref{estimate_2}).

(v) From Proposition \ref{transformlemma} (ii) and Proposition
\ref{minimiser}, we deduce that
\begin{align*}
\left|\mathbf{S}_{t}(\Delta)\phi(x)-\phi(x)\right|= &\
\left|\Delta L(t,x,\frac{x-y^{*}}{\Delta})+\mathbf{E}[\phi(Y^{t,y^{*}}_{t+\Delta})-\phi(x)|\mathcal{F}_{t}]\right|\\
\le &\
\Delta\max\left\{|L^{*}(\frac{x-y^{*}}{\Delta})|, |L_{*}(\frac{x-y^{*}}{\Delta})|\right\}+[\phi]_{1}\mathbf{E}\left[\left|Y^{t,y^{*}}_{t+\Delta}-x\right||\mathcal{F}_{t}\right]\\
\le &\ C\Delta+(C\Delta+M\Delta+CM\sqrt{\Delta})[\phi]_{1} \le
C\sqrt{\Delta},
\end{align*}
where the constant $C$ depends only on $[\phi]_1$, $M$ and $T$.

(vi) For $(t,x)\in[0,T-\Delta]\times\mathbb{R}$, let
$q^*\in\mathbb{R}$ be such that
$$H(t,x,\partial_x\phi(x))=\max_{q\in\mathbb{R}}\{q\partial_x\phi(x)-L(t,x,q)\}=q^*\partial_x\phi(x)-L(t,x,q^*).$$
From Proposition \ref{transformlemma} (iv), we have $|q^{*}|\le
\xi(|\partial_{x}\phi(x)|)\le C$, where the constant $C$ depends
only on $[\phi]_{1}$, $M$ and $T$.

Choosing $y=x-\Delta q^*$ in (\ref{semigroupequation1}) and applying
It\^o's formula to $\phi(Y^{t,x-\Delta q^{*}}_{t+\Delta})$ yield
\begin{align*}
  &\  \phi(x)-\mathbf{S}_{t}(\Delta)\phi(x)-\Delta{\mathbf{L}}_{t}\phi(x)\\
\ge &\
\phi(x)-\Delta L(t,x,q^*)-\phi(x-\Delta q^*)-\mathbf{E}[\phi(Y^{t,x-\Delta q^*}_{t+\Delta})-\phi(x-\Delta q^*)|\mathcal{F}_t] -\Delta{\mathbf{L}}_{t}\phi(x)\\
= &\
\left(\phi(x)-\phi(x-\Delta q^*)-\Delta q^*\partial_{x}\phi(x)\right)\\
&-
\left(\mathbf{E}\left[\int_t^{t+\Delta}\left(b(t,y)\partial_x\phi(Y_s^{t,x-\Delta
q^*})+\frac{1}{2}{|\sigma(t,y)|^2\partial_{xx}\phi(Y_s^{t,x-\Delta
q^*})}\right)ds|\mathcal{F}_t\right]\right.\\
&-\left.\Delta
b(t,x)\partial_x\phi(x)-\frac{1}{2}{\Delta|\sigma(t,x)|^2\partial_{xx}\phi(x)}
\right):=(I)-(II).
\end{align*}

Next, we obtain a lower and an upper bound for terms (I) and (II),
respectively. To this end, Taylor's expansion yields
\begin{align}\label{estimate_3}
  &\  \phi(x)-\phi(x-\Delta q^*)-\Delta q^*\partial_{x}\phi(x)\notag\\
= &\
\int_{x-\Delta q^{*}}^{x}\left(\partial_{x}\phi(x)-\int_{s}^{x}\partial_{xx}\phi(u)du\right)ds-\Delta q^{*}\partial_{x}\phi(x)\notag\\
\ge &\ -C\Delta^{2}|\partial_{xx}\phi|_{0}.
\end{align}

For term (II), applying It\^o's formula to
$\partial_x\phi(Y_s^{t,x-\Delta q^{*}})$ and
$\partial_{xx}\phi(Y_s^{t,x-\Delta q^{*}})$ gives
\begin{align*}
 &\ \mathbf{E}\left[\partial_{x}\phi(Y_s^{t,x-\Delta q^{*}})|\mathcal{F}_t\right]\\
 = &\ \partial_{x}\phi(y)+\int_{t}^{s}\mathbf{E}\left[b(t,y)\partial_{xx}\phi(Y_u^{t,x-\Delta q^{*}})+\frac12
 |\sigma(t,y)|^{2}\partial_{xxx}\phi(Y_{u}^{t,x-\Delta q^{*}})|\mathcal{F}_t\right]du,
\end{align*}
and
\begin{align*}
 &\ \mathbf{E}\left[\partial_{xx}\phi(Y_s^{t,x-\Delta q^{*}})|\mathcal{F}_t\right]\\
 = &\ \partial_{xx}\phi(y)+\int_{t}^{s}\mathbf{E}\left[b(t,y)\partial_{xxx}\phi(Y_u^{t,x-\Delta q^{*}})+\frac12
 |\sigma(t,y)|^{2}\partial_{xxxx}\phi(Y_{u}^{t,x-\Delta q^{*}})|\mathcal{F}_t\right]du.
\end{align*}

Keeping the terms involving the derivatives of $\phi$ and using
Assumption \ref{data assumption} on $b$ and $\sigma$, we further
have
\begin{align}\label{estimate_4}
  &\  \mathbf{E}\left[\int_t^{t+\Delta}\left(b(t,y)\partial_x\phi(Y_s^{t,x-\Delta q^{*}})+\frac{1}{2}{|\sigma(t,y)|^2\partial_{xx}\phi(Y_s^{t,x-\Delta q^{*}})}\right)ds|\mathcal{F}_t\right]\notag\\
  &\ -\Delta b(t,x)\partial_x\phi(x)-\frac{1}{2}{\Delta|\sigma(t,x)|^2\partial_{xx}\phi(x)} \notag\\
 \le &\
 C\Delta^{2}(|\partial_{x}\phi|_{0}+|\partial_{xx}\phi|_{0}+|\partial_{xxx}\phi|_{0}+|\partial_{xxxx}\phi|_{0}).
\end{align}

In turn, combining estimates (\ref{estimate_3}) and
(\ref{estimate_4}) above, we deduce that
$$ \frac{ \phi(x)-\mathbf{S}_{t}(\Delta)\phi(x)}{\Delta}-{\mathbf{L}_{t}}\phi(x)\ge  -C\Delta(|\partial_{x}\phi|_{0}+|\partial_{xx}\phi|_{0}+|\partial_{xxx}\phi|_{0}+|\partial_{xxxx}\phi|_{0}),$$
where the constant $C$ depends only on $[\phi]_{1}$, $M$ and $T$.

To prove the reverse inequality, we work as follows. For
$(t,x)\in[0,T-\Delta]\times\mathbb{R}$, let $y^{*}\in\mathbb{R}$ be
the minimizer in (\ref{semigroupequation1}) and set
$p^{*}:=\frac{x-y^{*}}{\Delta}$. Then, we deduce from Proposition
\ref{minimiser} that $|p^{*}|\leq C$, where the constant $C$ depends
only on $[\phi]_{1}$, $M$ and $T$. In turn, similar calculations as
above yield
\begin{align*}
  &\  \phi(x)-\mathbf{S}_{t}(\Delta)\phi(x)-\Delta{\mathbf{L}}_{t}\phi(x)\\
= &\
\phi(x)-\Delta L(t,x,p^{*})-\phi(x-\Delta p^{*})-\mathbf{E}[\phi(Y^{t,x-\Delta p^{*}}_{t+\Delta})-\phi(x-\Delta p^{*})|\mathcal{F}_t] -\Delta{\mathbf{L}}_{t}\phi(x)\\
= &\
\Delta \left(p^{*}\partial_{x}\phi(x)-L(t,x,p^{*})\right)-\Delta H(t,x,\partial_{x}\phi(x))-\int_{x}^{x-\Delta p^{*}}\left(\int_{x}^{s}\partial_{xx}\phi(u)du\right)ds\\
&- \left(\mathbf{E}\left[\int_t^{t+\Delta}\left(b(t,y)\partial_x\phi(Y_s^{t,x-\Delta p^*})+\frac{1}{2}{|\sigma(t,y)|^2}\partial_{xx}\phi(Y_s^{t,x-\Delta p^*})\right)ds|\mathcal{F}_t\right]\right.\\
&- \left.\Delta b(t,x)\partial_x\phi(x)-\frac12\Delta|\sigma(t,x)|^2\partial_{xx}\phi(x)\right)\\
\le &\
 C\Delta^{2}(|\partial_{x}\phi|_{0}+|\partial_{xx}\phi|_{0}+|\partial_{xxx}\phi|_{0}+|\partial_{xxxx}\phi|_{0}),
\end{align*}
for some constant $C$ depending only on $[\phi]_{1}$, $M$ and $T$.
We easily conclude.
\end{proof}

\subsection{The approximation scheme}\label{sebsection: splitting}

We present the approximation scheme for equation (\ref{PDE_1}).
%Let $N$ be a positive integer and set $\Delta:=\frac{T}{N}$. Then,
For $(t,x)\in\bar{Q}_{T-\Delta}$, we introduce the iterative
algorithm
\begin{equation}\label{splitting}
u^{\Delta}(t,x)=\mathbf{S}_{t}(\Delta)u^{\Delta}(t+\Delta,\cdot)(x),
\end{equation}
with $u^{\Delta}(T,\cdot)=U(\cdot)$ and $\mathbf{S}_t(\Delta)$
defined in (\ref{semigroupequation1}). The values between $T-\Delta$
and $T$ are obtained by a standard linear interpolation.

Specifically, the approximation scheme is given by
\begin{eqnarray}\label{semischeme}
\left\{
\begin{array}{ll}
\displaystyle S(\Delta, t,x,u^\Delta(t,x),u^\Delta(t+\Delta,\cdot))=0& \text{in}\ \bar{Q}_{T-\Delta};\\
\displaystyle u^\Delta(t,x)=g^\Delta(t,x)&\text{in}\
\bar{Q}_{T}\backslash\bar{Q}_{T-\Delta},
\end{array}
\right.
\end{eqnarray}
where
$S:\mathbb{R}^+\times\bar{Q}_{T-\Delta}\times\mathbb{R}\times\mathcal{C}_b(\mathbb{R}^n)\rightarrow\mathbb{R}$
and $g^\Delta:\bar{Q}_{T}\backslash\bar{Q}_{T-\Delta}\to\mathbb{R}$
are defined, respectively, by
\begin{equation}
S(\Delta,t,x,p,v)=\frac{p-\mathbf{S}_{t}(\Delta)v(x)}{\Delta}
\end{equation}and
\begin{equation}\label{gdelta}
g^\Delta(t,x)=\omega_1(t)U(x)+\omega_2(t)\mathbf{S}_{T-\Delta}(\Delta)U(x),
\end{equation}
with $\omega_1(t)=(t+\Delta-T)/\Delta$ and
$\omega_2(t)=(T-t)/\Delta$ being the linear interpolation weights.

Note that when $T-\Delta<t\le T$, the approximation term $g^\Delta$
corresponds to the usual linear interpolation between $T-\Delta$ and
$T$. When $t= T-\Delta$, we have $\omega_1(t)=0$ and $\omega_2(t)=1$
and, thus, $g^\Delta(T-\Delta,x)=u^\Delta(T-\Delta,x)$.

We first prove the well-posedness of the approximation scheme
(\ref{semischeme}).

\begin{lemma} %(Convergence of approximation scheme)
\label{semigrouptheorem} Suppose that Assumption 2.1 is satisfied.
Then, the approximation scheme (\ref{semischeme}) admits a unique
solution $u^\Delta\in\mathcal{C}_b(\bar{Q}_{T})$, with
$|u^{\Delta}|_{0}\leq C$, where the constant $C$ depends only on $M$
and $T$.
\end{lemma}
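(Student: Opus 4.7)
The plan is to build $u^\Delta$ by an explicit backward recursion on time slabs of length $\Delta$, and then verify continuity and boundedness. With $N := \lceil T/\Delta \rceil$, let $I_0 := (T-\Delta, T]\times\mathbb{R}^n$ and $I_k := [(T-(k+1)\Delta)_+, T-k\Delta]\times\mathbb{R}^n$ for $1\le k\le N-1$. On $I_0$, set $u^\Delta := g^\Delta$, which is a linear-in-$t$ convex combination of $U \in \mathcal{C}_b(\mathbb{R}^n)$ and $\mathbf{S}_{T-\Delta}(\Delta)U \in \mathcal{C}_b(\mathbb{R}^n)$ (the latter by Proposition \ref{semigroup}(iv)). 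Inductively, for $k \geq 1$, having $u^\Delta \in \mathcal{C}_b(I_{k-1})$, define
$$u^\Delta(t,x) := \mathbf{S}_t(\Delta)u^\Delta(t+\Delta,\cdot)(x), \qquad (t,x)\in I_k,$$
which makes sense because $t+\Delta$ lies in the previous slab and $u^\Delta(t+\Delta,\cdot)\in\mathcal{C}_b(\mathbb{R}^n)$. Matching at the interface $t=T-\Delta$ is automatic from \eqref{gdelta}, since $g^\Delta(T-\Delta,x)=\mathbf{S}_{T-\Delta}(\Delta)U(x)$. Existence follows from this construction, and uniqueness is immediate since the scheme \eqref{semischeme} expresses $u^\Delta(t,\cdot)$ as a deterministic function of $u^\Delta(t+\Delta,\cdot)$.

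The main technical point is the joint continuity of $u^\Delta$ on each slab $I_k$ with $k\geq 1$ (continuity within $I_0$ and across the interface at $T-\Delta$ is clear). Using \eqref{semigroupequation1}, I would rewrite
$$u^\Delta(t,x) = \inf_{y\in\mathbb{R}^n}\left\{\Delta L\!\left(t,x,\frac{x-y}{\Delta}\right) + f(t,y)\right\},$$
where, by the explicit form of $Y^{t,y}$, the map $f(t,y):=\mathbf{E}[u^\Delta(t+\Delta,Y_{t+\Delta}^{t,y})]$ is a Gaussian integral against the continuous bounded function $u^\Delta(t+\Delta,\cdot)$, with mean $y+\Delta b(t,y)$ and covariance $\Delta\sigma\sigma^T(t,y)$ depending continuously on $(t,y)$ via Assumption \ref{data assumption}(i). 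Dominated convergence then gives continuity of $f$ in $(t,y)$, while Proposition \ref{transformlemma}(iii) together with Assumption \ref{data assumption}(ii) gives continuity of $L$ in $(t,x,q)$. To pass continuity through the infimum, I would invoke the coercivity of $L$ in Proposition \ref{transformlemma}(iii) and boundedness of $f$ to confine any (near-)minimiser to a compact set that is uniform for $(t,x)$ in compact subsets of $I_k$; a standard parametric-minimisation argument then delivers continuity of $(t,x)\mapsto u^\Delta(t,x)$.

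For the uniform bound, I would iterate the stability estimate of Proposition \ref{semigroup}(iv), $|\mathbf{S}_t(\Delta)\phi|_0 \leq C\Delta + |\phi|_0$ with $C=\max\{|L^*(0)|,|H^*(0)|\}$ depending only on $H$ and hence on $M$. Starting from $\sup_{I_0}|u^\Delta| \leq M+C\Delta$, each backward slab contributes at most $C\Delta$, so after $N=\lceil T/\Delta\rceil$ steps
$$|u^\Delta|_0 \leq M + (N+1)C\Delta \leq M + C(T+\Delta),$$
a bound depending only on $M$ and $T$ (say for $\Delta\leq T$). The main obstacle is the joint-continuity step in the second paragraph, since the infimum in \eqref{semigroupequation1} is a non-smooth operation and one has to combine coercivity of $L$ with the a priori localisation of minimisers from Proposition \ref{minimiser} before passing continuity of the data through the minimisation; the remaining assertions then reduce to bookkeeping using the properties of $\mathbf{S}_t(\Delta)$ established in Propositions \ref{transformlemma} and \ref{semigroup}.
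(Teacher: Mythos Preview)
Your proposal is correct and follows essentially the same strategy as the paper: backward recursion starting from $g^\Delta$ on $(T-\Delta,T]$, followed by iteration of the stability bound from Proposition~\ref{semigroup}(iv) to obtain $|u^\Delta|_0 \le |U|_0 + C(T+\Delta)$. The paper's own proof is terser and simply invokes Proposition~\ref{semigroup}(iv), which already asserts that $\mathbf{S}_t(\Delta)$ maps $\mathcal{C}_b(\mathbb{R}^n)$ into itself; you instead unpack the joint $(t,x)$-continuity via the coercivity of $L$, localisation of near-minimisers, and dominated convergence, which is a legitimate and more detailed justification of a point the paper leaves implicit. One small caveat: when you describe $f(t,y)$ as a ``Gaussian integral'' with covariance $\Delta\sigma\sigma^T(t,y)$, bear in mind $\sigma$ may be degenerate; it is cleaner to integrate over the Brownian increment $W_{t+\Delta}-W_t$ directly, which has a non-degenerate standard Gaussian law, and this still yields the continuity you need.
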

\begin{proof}
By the stability property (iv) in Proposition \ref{semigroup}, we
have that $\mathbf{S}_{t}(\Delta)\phi$ is uniformly bounded if so is
$\phi$. Therefore, equation (\ref{semischeme}) is always well
defined in $\bar{Q}_{T-\Delta}$, which yields the existence and
uniqueness of the solution $u^\Delta$. Furthermore, for $0\leq t\leq
T-\Delta$, $|u^{\Delta}(t,\cdot)|_{0}\leq
C\Delta+|u^{\Delta}(t+\Delta,\cdot)|_{0}.$ By backward induction and
the definition of $g^{\Delta}$ in (\ref{semischeme}), we conclude
that
$$|u^{\Delta}|_{0}\leq CT+\sup_{t\in(T-\Delta,T]}|g^{\Delta}(t,\cdot)|_{0}\leq C,$$
where the constant $C$ depends only on $M$ and $T$.
\end{proof}\\

\begin{lemma}\label{errorsmall}
Suppose that Assumption \ref{data assumption} holds. Let
$u^{\Delta}\in\mathcal{C}_b(\bar{Q}_{T})$ satisfy the approximation
scheme (\ref{semischeme}) and $u\in\mathcal{C}_b^1(\bar{Q}_{T})$ be
the unique viscosity solution of equation (\ref{PDE_1}). Then, there
exists a constant $C$, depending only on $M$ and $T$, such that
\begin{equation}\label{estimate_final_interval}
|u-u^{\Delta}|\leq C\sqrt{\Delta}\ \ \text{in}\
\bar{Q}_{T}\backslash \bar{Q}_{T-\Delta}.
\end{equation}
\end{lemma}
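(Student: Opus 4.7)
The plan is to exploit the explicit linear-interpolation formula (\ref{gdelta}) that defines $u^{\Delta}$ on the strip $\bar{Q}_T\setminus\bar{Q}_{T-\Delta}$, and then compare each of the two ingredients to $u(t,x)$ using the $1/2$-Hölder time regularity of $u$ from Proposition \ref{solutionproperty} together with the one-step estimate from Proposition \ref{semigroup}(v).

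First, using the terminal condition $u(T,x)=U(x)$ together with $\omega_{1}(t)+\omega_{2}(t)=1$, I would rewrite
$$u^{\Delta}(t,x)-u(t,x)=\omega_{1}(t)\bigl(U(x)-u(t,x)\bigr)+\omega_{2}(t)\bigl(\mathbf{S}_{T-\Delta}(\Delta)U(x)-u(t,x)\bigr),$$
so the estimate reduces to bounding the two parentheses by $C\sqrt{\Delta}$ separately, since $\omega_{1}(t),\omega_{2}(t)\in[0,1]$ on the strip.

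For the first parenthesis, I would use Proposition \ref{solutionproperty}: since $|u|_{1}\le C$ and $[\,\cdot\,]_{1}$ encodes $1/2$-Hölder continuity in time, one obtains
$$|U(x)-u(t,x)|=|u(T,x)-u(t,x)|\le [u]_{1,1/2}(T-t)^{1/2}\le C\sqrt{\Delta}.$$
For the second parenthesis, I would apply the triangle inequality and invoke Proposition \ref{semigroup}(v) with $\phi=U\in\mathcal{C}^{1}_{b}(\mathbb{R}^{n})$ (which is allowed because $|U|_{1}\le M$ by Assumption \ref{data assumption}(i)):
$$|\mathbf{S}_{T-\Delta}(\Delta)U(x)-u(t,x)|\le |\mathbf{S}_{T-\Delta}(\Delta)U(x)-U(x)|+|U(x)-u(t,x)|\le C\sqrt{\Delta}.$$
Combining these estimates yields (\ref{estimate_final_interval}), with a constant $C$ depending only on $M$ and $T$ through the bounds in Propositions \ref{solutionproperty} and \ref{semigroup}.

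There is essentially no obstacle here; the only point worth flagging is that the relevant time regularity of $u$ coming from the norm $|\cdot|_{1}$ is only $1/2$-Hölder, not Lipschitz, so the bound cannot be improved beyond $O(\sqrt{\Delta})$. This rate matches the one delivered by Proposition \ref{semigroup}(v) for $\mathbf{S}_{T-\Delta}(\Delta)U-U$, ensuring the two contributions are of the same order and compatible with the global rate to be established later.
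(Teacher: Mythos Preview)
Your proof is correct and essentially the same as the paper's. The paper uses the slightly different but equivalent decomposition $u(t,x)-u^{\Delta}(t,x)=\bigl(u(t,x)-u(T,x)\bigr)+\omega_{2}(t)\bigl(U(x)-\mathbf{S}_{T-\Delta}(\Delta)U(x)\bigr)$ and then invokes exactly the same two ingredients: the $1/2$-H\"older time regularity from Proposition~\ref{solutionproperty} and the one-step bound from Proposition~\ref{semigroup}(v).
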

\begin{proof}
From (\ref{semischeme}), we
 have, for $(t,x)\in \bar{Q}_{T}\backslash \bar{Q}_{T-\Delta}$,
\begin{align*}
|u(t,x)-u^{\Delta}(t,x)|= &\ |u(t,x)-g^{\Delta}(t,x)|\\
=&\ |u(t,x)-u(T,x)+\omega_{2}(t)(U(x)-\mathbf{S}_{T-\Delta}(\Delta)U(x))| \\
 \leq &\
 |u(t,x)-u(T,x)|+|U(x)-\mathbf{S}_{T-\Delta}(\Delta)U(x)| \\
 \leq &\
 C(\sqrt{|T-t|}+\sqrt{\Delta}) \leq C\sqrt{\Delta},
\end{align*}
where the second to last inequality follows from the regularity
property of the solution $u$ (cf. Proposition
\ref{solutionproperty}) and property (\emph{v}) of the operator
$\mathbf{S}_{t}(\Delta)$ (cf. Proposition \ref{semigroup}).
\end{proof}\\

Using the properties of $\mathbf{S}_{t}(\Delta)$ established in
Proposition \ref{semigroup}, we next obtain the following key
properties of the approximation scheme (\ref{semischeme}).

\begin{proposition}\label{scheme property}
Suppose that Assumption \ref{data assumption} is satisfied. Then,
for each $t$ and $\Delta$ with $0\leq t<t+\Delta \leq T$,
$x\in\mathbb{R}^{n}$, $p\in\mathbb{R}$ and
$v\in\mathcal{C}_{b}(\mathbb{R}^{n})$, the approximation scheme
$S(\Delta,t,x,p,v)$ has the following properties:

(i) (Constant preserving) For any $c\in\mathbb{R}$,
$$S(\Delta,t,x,p+c,v+c)=S(\Delta,t,x,p,v).$$

(ii) (Monotonicity) For any  $u\in\mathcal{C}_{b}(\mathbb{R}^{n})$
with $u\le v$,
$$S(\Delta,t,x,p,u)\ge S(\Delta,t,x,p,v).$$

(iii) (Convexity) $S(\Delta,t,x,p,v)$ is convex in $p$ and $v$.

(iv) (Consistency) For any
$\phi\in\mathcal{C}_b^{\infty}(\bar{Q}_{T})$, there exists a
constant $C$, depending only on $[\phi]_{2,1}$, $M$ and $T$, such
that
\begin{align}\label{consistant_error}
 &\ |-\partial_t\phi(t,x)+\mathbf{L}_t\phi(t,x)-S(\Delta,t,x,\phi(t,x),\phi(t+\Delta,\cdot))|\notag\\
 \le  &\
 C\Delta\left(|\partial_{tt}\phi|_{0}+|\partial_{xxxx}\phi|_0+|\partial_{xxt}\phi|_{0}+\mathcal{R}(\phi)\right).
\end{align}
\end{proposition}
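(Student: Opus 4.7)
Parts (i)--(iii) should follow almost directly from the corresponding properties of the backward operator $\mathbf{S}_{t}(\Delta)$ established in Proposition \ref{semigroup}. Writing $S(\Delta,t,x,p,v)=\Delta^{-1}(p-\mathbf{S}_t(\Delta)v(x))$, constant preservation (i) is immediate from Proposition \ref{semigroup}(i) since the constant shift on $v$ passes through $\mathbf{S}_t(\Delta)$ and cancels the shift on $p$. Monotonicity (ii) is immediate from Proposition \ref{semigroup}(ii), keeping track of the minus sign in front of $\mathbf{S}_t(\Delta)v$. Convexity (iii) jointly in $(p,v)$ follows because $p\mapsto p$ is linear while $v\mapsto -\mathbf{S}_t(\Delta)v$ is convex, by concavity of $\mathbf{S}_t(\Delta)$ in its argument (Proposition \ref{semigroup}(iii)).

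The real work is the consistency estimate (iv). The plan is a telescoping split of the scheme into a time-Taylor piece and a spatial-scheme piece:
\begin{align*}
\frac{\phi(t,x)-\mathbf{S}_t(\Delta)\phi(t+\Delta,\cdot)(x)}{\Delta}
=\frac{\phi(t,x)-\phi(t+\Delta,x)}{\Delta}
+\frac{\phi(t+\Delta,x)-\mathbf{S}_t(\Delta)\phi(t+\Delta,\cdot)(x)}{\Delta}.
\end{align*}
A second-order Taylor expansion in time gives
\[
\frac{\phi(t+\Delta,x)-\phi(t,x)}{\Delta}=\partial_t\phi(t,x)+O(\Delta\,|\partial_{tt}\phi|_0),
\]
while Proposition \ref{semigroup}(vi) applied to the spatial function $\psi:=\phi(t+\Delta,\cdot)\in\mathcal{C}_b^\infty(\mathbb{R}^n)$, whose Lipschitz constant is bounded by $[\phi]_{2,1}$, yields
\[
\left|\frac{\phi(t+\Delta,x)-\mathbf{S}_t(\Delta)\phi(t+\Delta,\cdot)(x)}{\Delta}-\mathbf{L}_t\phi(t+\Delta,\cdot)(x)\right|\le C\Delta\bigl(|\partial_{xxxx}\phi|_0+\mathcal{R}(\phi)\bigr).
\]

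It then remains to control the ``time shift'' $\mathbf{L}_t\phi(t+\Delta,\cdot)(x)-\mathbf{L}_t\phi(t,x)$. Writing this difference term by term produces $-\tfrac12\mathrm{Trace}\bigl(\sigma\sigma^T(t,x)(\partial_{xx}\phi(t+\Delta,x)-\partial_{xx}\phi(t,x))\bigr)$, the analogous drift contribution, and the Hamiltonian difference $H(t,x,\partial_x\phi(t+\Delta,x))-H(t,x,\partial_x\phi(t,x))$. The first two are bounded by $C\Delta(|\partial_{xxt}\phi|_0+|\partial_{xt}\phi|_0)$ using Assumption \ref{data assumption}(i). Combining the Taylor estimate, the application of Proposition \ref{semigroup}(vi), and the time-shift bound yields the required inequality, with the constants collected on the right absorbed into the ``insignificant'' remainder $\mathcal{R}(\phi)$ beside the principal terms $|\partial_{tt}\phi|_0$, $|\partial_{xxxx}\phi|_0$ and $|\partial_{xxt}\phi|_0$.

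The main obstacle I foresee is precisely the Hamiltonian difference: the assumption $[H(\cdot,\cdot,p)]_1\le M$ supplies Lipschitzness in $(t,x)$ but not in $p$. I plan to overcome this by noting that $\partial_x\phi$ stays in the compact ball $\{|p|\le[\phi]_{2,1}\}$, on which $H(t,x,\cdot)$ is convex and continuous, hence locally Lipschitz with a constant depending only on $[\phi]_{2,1}$, $H^*$ and $H_*$ (bounds for $H$ on a slightly larger ball, available by Assumption \ref{data assumption}(ii)). This gives $|H(t,x,\partial_x\phi(t+\Delta,x))-H(t,x,\partial_x\phi(t,x))|\le C\Delta|\partial_{xt}\phi|_0$ and closes the estimate with $C$ depending only on $[\phi]_{2,1}$, $M$ and $T$.
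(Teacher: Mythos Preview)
Your proposal is correct and follows essentially the same route as the paper: parts (i)--(iii) are deduced directly from Proposition~\ref{semigroup}, and for (iv) the paper performs the identical three-term split (your time-Taylor piece, the application of Proposition~\ref{semigroup}(vi) to $\phi(t+\Delta,\cdot)$, and the time-shift $\mathbf{L}_t\phi(t+\Delta,\cdot)-\mathbf{L}_t\phi(t,\cdot)$). Your treatment of the Hamiltonian difference via local Lipschitzness of the convex map $p\mapsto H(t,x,p)$ on the ball $\{|p|\le[\phi]_{2,1}\}$ is in fact more explicit than the paper, which simply asserts the bound $C\Delta(|\partial_{xxt}\phi|_0+|\partial_{xt}\phi|_0)$ with $C$ depending on $[\phi]_{2,1}$ and $M$ without spelling out this step.
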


\begin{proof}
Parts (i)-(iii) follow easily from Proposition \ref{semigroup}, so
we only prove (iv). To this end, we split the consistency error into
three parts. Specifically,
\begin{align*}
&|-\partial_t\phi(t,x)+\mathbf{L}_t\phi(t,x)-S(\Delta,t,x,\phi(t,x),\phi(t+\Delta,\cdot))|\\
\leq &\ {\mathcal{E}(t,\Delta,\phi(t+\Delta,\cdot))} +
{|\phi(t+\Delta,x)-\phi(t,x)-\Delta\partial_t\phi(t,x)|\Delta^{-1}}\\
&+
{|\mathbf{L}_t\phi(t,x)-\mathbf{L}_t\phi(t+\Delta,x)|}:=(I)+(II)+(III),
\end{align*}
where $\mathcal{E}$ was defined in (\ref{operator_E}). For term (I),
Proposition \ref{semigroup} (vi) yields
\begin{align}\label{estimate8}
\mathcal{E}(t,\Delta,\phi(t+\Delta,\cdot)) \leq &\ C\Delta\left(|\partial_{xxxx}\phi(t+\Delta,\cdot)|_{0}+\mathcal{R}(\phi(t+\Delta,\cdot))\right)\notag\\
 \leq &\
 C\Delta\left(|\partial_{xxxx}\phi|_{0}+\mathcal{R}(\phi)\right),
\end{align}
for some constant $C$ depending only on $[\phi]_{2,1}$, $M$ and $T$.
For term (II), Taylor's expansion gives
\begin{align}\label{estimate9}
&\
|\phi(t+\Delta,x)-\phi(t,x)-\Delta\partial_t\phi(t,x)|\Delta^{-1}\notag\\
\leq&\
|\int_{t}^{t+\Delta}\left(\partial_{t}\phi(t,x)-\int_v^{t}\partial_{tt}\phi(u,x)du\right)dv-
\Delta\partial_t\phi(t,x)|\Delta^{-1}\notag\\
\leq&\ \Delta|\partial_{tt}\phi|_0.
\end{align}
Finally, for term (III), we have from Assumption \ref{data
assumption} that
\begin{align}\label{estimate10}
&\
|\mathbf{L}_t\phi(t,x)-\mathbf{L}_t\phi(t+\Delta,x)|\notag\\
\leq &\
C(|\partial_{xx}\phi(t,x)-\partial_{xx}\phi(t+\Delta,x)|+|\partial_{x}\phi(t,x)-\partial_{x}\phi(t+\Delta,x)|)\notag\\
&+
|H(t,x,\partial_{x}\phi(t,x))-H(t,x,\partial_{x}\phi(t+\Delta,x))|\notag\\
\leq &\ C\Delta(|\partial_{xxt}\phi|_{0}+|\partial_{xt}\phi|_{0}),
\end{align}
for some constant $C$ depending only on $[\phi]_{2,1}$ and $M$.
Combining estimates (\ref{estimate8})-(\ref{estimate10}), we easily
conclude.
\end{proof}\\

The following ``comparison-type" result for the approximation scheme
(\ref{semischeme}) will be used frequently in the next section. Most
of the arguments follow from Lemma 3.2 of \cite{BJ}, but we
highlight some key steps for the reader's convenience.

\begin{proposition}\label{schemecomparison}
Suppose that Assumption 2.1 is satisfied, and that $u$,
$v\in\mathcal{C}_b(\bar{Q}_T)$ are such that
\begin{align*}
S(\Delta,t,x,u,u(t+\Delta,\cdot)) \leq h_1\ \ \text{in}\ \bar{Q}_{T-\Delta};\\
S(\Delta,t,x,v,v(t+\Delta,\cdot))\ge h_2\ \ \text{in}\
\bar{Q}_{T-\Delta},
\end{align*}
for some $h_1$, $h_2\in\mathcal{C}_b(\bar{Q}_{T-\Delta})$. Then,
\begin{equation}\label{comparison_inequality}
u-v\leq \sup_{\bar{Q}_{T}\backslash
\bar{Q}_{T-\Delta}}(u-v)^{+}+(T-t)\sup_{\bar{Q}_{T-\Delta}}(h_{1}-h_{2})^{+}\
\ \text{in}\ \bar{Q}_{T}.
\end{equation}
\end{proposition}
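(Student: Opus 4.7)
My plan is to argue via a discrete dynamic-programming style comparison for the backward operator $\mathbf{S}_t(\Delta)$, using only the constant-preserving and monotonicity properties established in Proposition \ref{scheme property}. First I would rewrite the two scheme inequalities through the definition of $S$ as
\[
u(t,x) - \mathbf{S}_t(\Delta) u(t+\Delta,\cdot)(x) \leq \Delta h_1(t,x)
\]
and
\[
v(t,x) - \mathbf{S}_t(\Delta) v(t+\Delta,\cdot)(x) \geq \Delta h_2(t,x).
\]
Subtracting these and setting $\beta := \sup_{\bar{Q}_{T-\Delta}}(h_1 - h_2)^+$ would then give the pointwise bound
\[
(u-v)(t,x) \leq \mathbf{S}_t(\Delta) u(t+\Delta,\cdot)(x) - \mathbf{S}_t(\Delta) v(t+\Delta,\cdot)(x) + \Delta \beta
\]
on $\bar{Q}_{T-\Delta}$.

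Next, I would estimate the bracketed difference via the scheme properties. Writing $\Phi(s) := \sup_{y \in \mathbb{R}^n}(u-v)^+(s,y)$, the pointwise inequality $u(t+\Delta,\cdot) \leq v(t+\Delta,\cdot) + \Phi(t+\Delta)$ (with $\Phi(t+\Delta)$ a nonnegative constant) combined with the monotonicity of $\mathbf{S}_t(\Delta)$ followed by the constant-preserving identity yields
\[
\mathbf{S}_t(\Delta) u(t+\Delta,\cdot)(x) - \mathbf{S}_t(\Delta) v(t+\Delta,\cdot)(x) \leq \Phi(t+\Delta).
\]
Inserting this into the previous display and taking the supremum in $x$ delivers the scalar recursion
\[
\Phi(t) \leq \Phi(t+\Delta) + \Delta \beta, \qquad t \in [0, T-\Delta].
\]

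Finally I would iterate this recursion in steps of size $\Delta$. For fixed $t \in [0, T-\Delta]$ I set $k := \lfloor (T-t)/\Delta \rfloor$: one then verifies $t + j\Delta \in [0, T-\Delta]$ for $0 \leq j \leq k-1$ while $t + k\Delta \in (T-\Delta, T]$, so telescoping gives
\[
\Phi(t) \leq \Phi(t + k\Delta) + k \Delta \beta \leq \alpha + (T-t) \beta,
\]
with $\alpha := \sup_{\bar{Q}_T \setminus \bar{Q}_{T-\Delta}}(u-v)^+$; on the terminal strip $\bar{Q}_T \setminus \bar{Q}_{T-\Delta}$ the bound is immediate from the definition of $\alpha$. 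The only real subtlety is the bookkeeping around $k$ to ensure each intermediate time still lies in the domain of validity of the recursion, but this is purely mechanical and invokes no properties beyond those already used.
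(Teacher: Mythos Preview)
Your proof is correct and takes a genuinely different route from the paper. The paper first reduces to the case $u\le v$ on the terminal strip and $h_1\le h_2$ by replacing $v$ with $w:=v+\alpha+(T-t)\beta$ (using monotonicity and constant preservation to check $w$ is still a supersolution), and then argues by contradiction with a penalty: setting $\psi_b(t)=b(T-t)$ and $M(b)=\sup_{\bar Q_T}(u-v-\psi_b)$, it assumes $M(0)>0$, picks a maximising sequence for some small $b>0$, shows these points must lie in $\bar Q_{T-\Delta}$, and plugs them into the scheme inequalities to force $b\le 0$.

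Your argument is a direct discrete Gronwall iteration: derive the one-step recursion $\Phi(t)\le\Phi(t+\Delta)+\Delta\beta$ for $\Phi(t)=\sup_x(u-v)^+(t,x)$ and telescope. This is shorter, avoids the maximising-sequence machinery, and makes the $(T-t)$ factor appear transparently as a step count times $\Delta$. The paper's approach, by contrast, is closer in spirit to continuous viscosity comparison proofs and to Lemma~3.2 of \cite{BJ}; it would adapt more readily to schemes where one only has approximate monotonicity or where the supremum over $x$ is not attained uniformly, but for the present scheme your direct iteration is entirely adequate and arguably more natural.
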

\begin{proof}
We first note that without loss of generality, we may assume that
\begin{equation}\label{comparisonforscheme}
u\le v\ \ \text{in}\ \bar{Q}_{T}\backslash \bar{Q}_{T-\Delta}\ \
\text{and}\ \ h_{1}\leq h_{2}\ \ \text{in}\ \bar{Q}_{T-\Delta},
\end{equation}
since, otherwise, the function $w:=v+\sup_{\bar{Q}_{T}\backslash
\bar{Q}_{T-\Delta}}(u-v)^{+}+(T-t)\sup_{\bar{Q}_{T-\Delta}}(h_{1}-h_{2})^{+}$
satisfies $u\le w$ in $\bar{Q}_{T}\backslash \bar{Q}_{T-\Delta}$
and, by the monotonicity property (ii) in Proposition \ref{scheme
property},
\begin{align*}
S(\Delta,t,x,w,w(t+\Delta,\cdot)) &\ge
S(\Delta,t,x,v,v(t+\Delta,\cdot))+\sup_{\bar{Q}_{T-\Delta}}(h_{1}-h_{2})^{+}\\
&\ge h_{2}+\sup_{\bar{Q}_{T-\Delta}}(h_{1}-h_{2})^{+} \ge h_{1}\ \
\text{in}\ \bar{Q}_{T-\Delta}.
\end{align*}
Thus, it suffices to prove that $u\le v$ in $\bar{Q}_{T}$ when
(\ref{comparisonforscheme}) holds.

To this end, for $b\ge 0$, let $\psi_{b}(t):=b(T-t)$ and
$M(b):=\sup_{\bar{Q}_{T}}\{u-v-\psi_{b}\}.$ We need to show that
$M(0)\leq 0$. We argue by contradiction. If $M(0)>0$, then by the
continuity of $M$, we must have $M(b)>0$, for some $b>0$. For such
$b$, consider a sequence, say $\{(t_{n},x_{n})\}$ in $\bar{Q}_{T}$,
such that for $\delta(t,x):=M(b)-(u-v-\psi_{b})(t,x)$, we have
$\lim_{n\to\infty}\delta(t_n,x_n)=0.$ Since $M(b)>0$ but
$u-v-\psi_{b}\leq 0$ in $\bar{Q}_{T}\backslash \bar{Q}_{T-\Delta}$,
we must have $t_{n}\leq T-\Delta$ for sufficiently large $n$. Then,
for such $n$, we have
\begin{align*}
h_1(t_{n},x_{n}) \ge &\ S(\Delta,t_{n},x_{n},u(t_{n},x_{n}),u(t_{n}+\Delta,\cdot)) \\
 \ge &\
 S(\Delta,t_{n},x_{n},(v+\psi_{b}+M(b)-\delta)(t_{n},x_{n}),(v+\psi_{b}+M(b))(t_{n}+\Delta,\cdot)) \\
 \ge &\
 S(\Delta,t_{n},x_{n},v(t_{n},x_{n}),v(t_{n}+\Delta,\cdot))+(\psi_{b}(t_{n})-\psi_{b}(t_{n}+\Delta)-\delta(t_n,x_n))\Delta^{-1} \\
 \ge &\
 h_{2}(t_{n},x_{n})+b-\delta(t_n,x_n)\Delta^{-1}.
\end{align*}
On the other hand, since $h_{1}\leq h_{2}$ in $\bar{Q}_{T-\Delta}$,
we must have $b-\delta(t_n,x_n)\Delta^{-1}\leq 0$. Then, letting
$n\to\infty$, we deduce that $b\leq 0$, which is a contradiction.
\end{proof}\\

{Following along similar argument, we also obtain the comparison
inequality (\ref{comparison_inequality}) on the partition grid
$\bar{\mathcal{G}}^\Delta_{T}:
\{0<\Delta<2\Delta<\dots<T-\Delta<T\}$.}

{\begin{corollary}\label{schemecompco}
%Consider the grid $\bar{\mathcal{G}}^\Delta_{T}\subset\bar{Q}_{T}$, and
Let $\mathcal{G}^\Delta_{T}:=\bar{\mathcal{G}}^\Delta_{T}\backslash
\{T\}$ be the partition grid before terminal time $T$. Suppose that
$u$, $v\in\mathcal{C}_b(\bar{Q}_T)$ are such that
\begin{align*}
S(\Delta,t,x,u,u(t+\Delta,\cdot)) \leq h_1\ \ \text{in}\ \mathcal{G}^\Delta_{T};\\
S(\Delta,t,x,v,v(t+\Delta,\cdot))\ge h_2\ \ \text{in}\
\mathcal{G}^\Delta_{T},
\end{align*}
for some $h_1$, $h_2\in\mathcal{C}_b(\mathcal{G}^\Delta_{T})$. Then,
\begin{equation}
u-v\leq |(u(T,\cdot)-v(T,\cdot))^{+}|_0+(T-t)|(h_{1}-h_{2})^{+}|_0\
\ \text{in}\ \bar{\mathcal{G}}^\Delta_{T}.
\end{equation}
\end{corollary}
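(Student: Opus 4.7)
The plan is to mirror the proof of Proposition \ref{schemecomparison}, with the only change being that the domain of contradiction is the discrete grid $\bar{\mathcal{G}}^\Delta_{T}$. This is in fact cleaner than the continuous case: on the grid, the complement $\bar{\mathcal{G}}^\Delta_{T}\setminus\mathcal{G}^\Delta_{T}$ is the single time slice $\{T\}$, so the term that in Proposition \ref{schemecomparison} involved a supremum over the terminal slab $\bar{Q}_{T}\setminus\bar{Q}_{T-\Delta}$ collapses to $|(u(T,\cdot)-v(T,\cdot))^+|_0$.

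First, I would reduce to the case $u(T,\cdot)\le v(T,\cdot)$ and $h_1\le h_2$ on $\mathcal{G}^\Delta_{T}$. Set
\[
w(t,x):=v(t,x)+|(u(T,\cdot)-v(T,\cdot))^+|_0+(T-t)|(h_{1}-h_{2})^+|_0 .
\]
Then $u(T,\cdot)\le w(T,\cdot)$, and using the constant-preserving property (i) and monotonicity (ii) of Proposition \ref{scheme property},
\[
S(\Delta,t,x,w,w(t+\Delta,\cdot))\ge h_2+|(h_1-h_2)^+|_0\ge h_1\ \ \text{on}\ \mathcal{G}^\Delta_T,
\]
so it suffices to prove that $u\le v$ on $\bar{\mathcal{G}}^\Delta_{T}$ under these reductions.

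Next, for $b\ge 0$ I would set $\psi_b(t):=b(T-t)$ and $M(b):=\sup_{\bar{\mathcal{G}}^\Delta_T}\{u-v-\psi_b\}$ and show $M(0)\le 0$ by contradiction. If $M(0)>0$, continuity of $b\mapsto M(b)$ gives some $b>0$ with $M(b)>0$. Pick a sequence $(t_n,x_n)\in\bar{\mathcal{G}}^\Delta_T$ with $\delta_n:=M(b)-(u-v-\psi_b)(t_n,x_n)\to 0$. Since $u(T,\cdot)-v(T,\cdot)-\psi_b(T)\le 0$ while $M(b)>0$, we must have $t_n\ne T$ for large $n$, i.e.\ $t_n\in\mathcal{G}^\Delta_T$, and crucially $t_n+\Delta\in\bar{\mathcal{G}}^\Delta_T$, so the scheme is meaningful at $(t_n,x_n)$ and the inequality $u\le v+\psi_b+M(b)$ holds on the next time layer of the grid.

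Then, chaining the two scheme inequalities through (i)--(ii) of Proposition \ref{scheme property},
\begin{align*}
h_1(t_n,x_n)&\ge S(\Delta,t_n,x_n,u(t_n,x_n),u(t_n+\Delta,\cdot))\\
&\ge S(\Delta,t_n,x_n,(v+\psi_b+M(b)-\delta_n)(t_n,x_n),(v+\psi_b+M(b))(t_n+\Delta,\cdot))\\
&\ge h_2(t_n,x_n)+\frac{\psi_b(t_n)-\psi_b(t_n+\Delta)-\delta_n}{\Delta}\\
&=h_2(t_n,x_n)+b-\frac{\delta_n}{\Delta}.
\end{align*}
Since $h_1\le h_2$ on $\mathcal{G}^\Delta_T$, we obtain $b\le \delta_n/\Delta$; letting $n\to\infty$ yields $b\le 0$, the desired contradiction.

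The only real obstacle — and it is small — is verifying that the maximizing point is not at $T$, so that the scheme can be invoked at $(t_n,x_n)$ and the upper bound $u\le v+\psi_b+M(b)$ is available at the successor node $t_n+\Delta\in\bar{\mathcal{G}}^\Delta_T$. This is handled precisely by the reduction $u(T,\cdot)\le v(T,\cdot)$ together with $M(b)>0$, exactly as in Proposition \ref{schemecomparison}; everything else is a verbatim transcription of that proof to the grid.
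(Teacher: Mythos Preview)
Your proposal is correct and follows exactly the approach the paper indicates: the paper does not give a separate proof for Corollary \ref{schemecompco} but merely states that it follows ``along similar argument'' to Proposition \ref{schemecomparison}, and your write-up carries out that adaptation faithfully. The one point worth making explicit (which you identify) is that on the grid the terminal slab collapses to $\{T\}$, and since $t_n\in\mathcal{G}^\Delta_T$ forces $t_n+\Delta\in\bar{\mathcal{G}}^\Delta_T$, the inequality $u\le v+\psi_b+M(b)$ is available at the successor layer so that monotonicity can be applied; this is the only place where the discrete setting differs from the continuous one, and you handle it correctly.
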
}

%%%%%%%%%%%%%%%%%%%%%%%%%%%%%%%%%%%%%%%%%
%%%%%%%%%%%%%%%%%%%%%%%%%%%%%%%%%%%%%%%%%%%%%

%%%%%%%%%%%%%%%%%%%%%%%%%%%%%%%%%%%%%%%%%%%%%%%%%%%%%%%%%%%%%%%%%%%%%%
%%%%%%%%%%%%%%%%%%%%%%%%%%%%%%%%%%%%%%%%%%%%%%%%%%%%%%%%%%%%%%%%%
\section{Convergence rate of the approximation scheme}

{The classical convergence theory of Barles-Souganidis (see
\cite{Barles}) will only imply the convergence of the approximate
solution $u^{\Delta}$ to the viscosity solution $u$ of equation
(\ref{PDE_1}). To further determine the convergence rate of
$u^{\Delta}$ to $u$, we establish upper and lower bounds on the
approximation error.}

We start with the special case when (\ref{PDE_1}) has a unique
smooth solution $u$ with bounded derivatives of any order.}

\begin{theorem}\label{smoothcase} Suppose that Assumption \ref{data assumption} is
satisfied and that equation (\ref{PDE_1}) admits a unique smooth
solution $u\in\mathcal{C}_b^{\infty}(\bar{Q}_T)$. Then, there exists
a constant $C$, depending only on $M$ and $T$, such that
$$|u-u^{\Delta}|\leq C\Delta\ \ \text{in}\ \bar{Q}_T.$$
\end{theorem}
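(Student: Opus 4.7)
The plan is to combine the consistency estimate from Proposition \ref{scheme property}(iv) with the scheme comparison principle (Proposition \ref{schemecomparison}), exploiting the hypothesis $u\in\mathcal{C}_b^{\infty}(\bar Q_T)$ to produce a consistency residual of order $\Delta$ rather than the weaker bound that one would need for a merely viscosity solution.

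First I would plug the exact solution $u$ into the scheme operator. Since $u$ classically satisfies $-\partial_tu+\mathbf{L}_tu=0$ in $Q_T$ and all derivatives of $u$ are bounded, Proposition \ref{scheme property}(iv) yields
$$|S(\Delta,t,x,u(t,x),u(t+\Delta,\cdot))|\le C\Delta\quad\text{in }\bar Q_{T-\Delta},$$
where $C$ depends only on finitely many sup-norms of derivatives of $u$ and hence, via Proposition \ref{solutionproperty} and the smoothness hypothesis, only on $M$ and $T$. Thus $u$ is a sub- and supersolution of the scheme up to the residual $\pm C\Delta$, while $u^{\Delta}$ satisfies the scheme with residual zero.

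Next I would estimate $|u-u^{\Delta}|$ on the terminal strip $\bar Q_T\setminus\bar Q_{T-\Delta}$, where $u^{\Delta}=g^{\Delta}$ is the linear interpolation between $U$ and $\mathbf{S}_{T-\Delta}(\Delta)U$. Two ingredients combine: (a) Taylor expansion in $t$ shows that the linear interpolant $\omega_1(t)u(T,x)+\omega_2(t)u(T-\Delta,x)$ differs from $u(t,x)$ by $O(\Delta^2|\partial_{tt}u|_0)$, and (b) the one-step scheme residual satisfies $|u(T-\Delta,\cdot)-\mathbf{S}_{T-\Delta}(\Delta)U|_0\le C\Delta^2$, obtained by multiplying the consistency estimate at $t=T-\Delta$ by $\Delta$ and using the PDE. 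Combining (a) with (b) and recalling $U=u(T,\cdot)$ gives $|u-u^{\Delta}|\le C\Delta$ on $\bar Q_T\setminus\bar Q_{T-\Delta}$.

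Finally, I would apply Proposition \ref{schemecomparison} twice. Taking $h_1=C\Delta$ and $h_2=0$ yields
$$u-u^{\Delta}\le\sup_{\bar Q_T\setminus\bar Q_{T-\Delta}}(u-u^{\Delta})^+ + (T-t)\,C\Delta\le C\Delta\quad\text{in }\bar Q_T,$$
and interchanging the roles of $u$ and $u^{\Delta}$ (with $h_1=0$, $h_2=-C\Delta$) delivers the matching reverse inequality. The main obstacle is the terminal strip: the generic stability-type bound of Proposition \ref{semigroup}(v) only gives $O(\sqrt{\Delta})$, so one must upgrade this to $O(\Delta^2)$ by feeding the smooth terminal datum into the stronger consistency estimate of Proposition \ref{scheme property}(iv); everything else is a bookkeeping consequence of the two core ingredients, consistency and comparison.
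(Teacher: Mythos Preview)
Your proof is correct and follows essentially the same approach as the paper: consistency gives $|S(\Delta,t,x,u,u(t+\Delta,\cdot))|\le C\Delta$, comparison (Proposition \ref{schemecomparison}) propagates this, and a sharper $O(\Delta)$ bound on the terminal strip replaces the generic $O(\sqrt{\Delta})$ bound. The only cosmetic difference is in the terminal strip: the paper invokes Corollary \ref{schemecompco} on the single grid step to obtain $|u(T-\Delta,\cdot)-u^{\Delta}(T-\Delta,\cdot)|\le C\Delta^2$ and then uses the time-Lipschitz bound $[u]_{1,1}\Delta$, whereas you get the same one-step residual directly from the consistency identity $\Delta\,S(\Delta,T-\Delta,x,u(T-\Delta,x),U)=u(T-\Delta,x)-\mathbf{S}_{T-\Delta}(\Delta)U(x)$ and control the remaining interpolation error by a second-order Taylor expansion in $t$; both routes are equivalent in substance.
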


\begin{proof} Using that $u\in\mathcal{C}_b^{\infty}(\bar{Q}_T)$,
the consistency error estimate (\ref{consistant_error}) yields
\begin{align*}
&|S(\Delta,t,x,u(t,x),u(t+\Delta,\cdot))|\\
\leq&\
C\Delta\left(|\partial_{tt}u|_{0}+|\partial_{xxxx}u|_0+|\partial_{xxt}u|_{0}+\mathcal{R}(u)\right)
\leq C\Delta,
\end{align*}
 for
$(t,x)\in\bar{Q}_{T-\Delta}$. On the other hand, from the definition
of the approximation scheme (\ref{semischeme}), we have
$$S(\Delta,t,x,u^{\Delta}(t,x),u^{\Delta}(t+\Delta,\cdot))=0,$$
for $(t,x)\in\bar{Q}_{T-\Delta}$. In turn, the comparison result in
Proposition \ref{schemecomparison} yields
$$u(t,x)-u^{\Delta}(t,x)\leq \sup_{(t,x)\in\bar{Q}_T\backslash\bar{Q}_{T-\Delta}}(u(t,x)-u^{\Delta}(t,x))^++(T-t)C\Delta,$$
and
$$u^{\Delta}(t,x)-u(t,x)\leq \sup_{(t,x)\in\bar{Q}_T\backslash\bar{Q}_{T-\Delta}}(u^{\Delta}(t,x)-u(t,x))^++(T-t)C\Delta,$$
for $(t,x)\in\bar{Q}_T$. It is left to prove that
$|u-u^\Delta|<C\Delta$ in $\bar{Q}_T\backslash\bar{Q}_{T-\Delta}$.
Indeed, the comparison result in Corollary \ref{schemecompco} yields
$$|u(T-\Delta,x)-u^\Delta(T-\Delta,x)|\le (T-(T-\Delta))C\Delta = C\Delta^2,$$
and thus, in $\bar{Q}_T\backslash\bar{Q}_{T-\Delta}$,
\begin{align*}
&\ |u(t,x)-u^\Delta(t,x)| = |u(t,x)-\omega_1(t)U(x)-\omega_2(t)u^{\Delta}(T-\Delta,x)| \\
=&\ |u(t,x)-U(x)+\omega_2(t)(U(x)-u^{\Delta}(T-\Delta,x))|\\
\le &\
|u(t,x)-U(x)|+\omega_2(t)\left(|U(x)-u(T-\Delta,x)|+|u(T-\Delta,x)-u^\Delta(T-\Delta,x)|\right)\\
\le &\ [u]_{1,1}\Delta+\omega_2(t)  [u]_{1,1}\Delta +
\omega_2(t)C\Delta^2 \le C\Delta.
\end{align*} We easily conclude.
\end{proof}\\

In general, the above result might not hold as (\ref{PDE_1}) only
admits a viscosity solution $u\in\mathcal{C}_b^1(\bar{Q}_T)$ due to
possible degeneracies. A natural idea is then to approximate the
viscosity solution $u$ by a sequence of smooth sub- and
supersolutions $u_{\varepsilon}$ and, in turn, compare them with
$u^{\Delta}$ using the comparison result for the approximation
scheme developed in Proposition \ref{schemecomparison}. We carry out
this {procedure} next.

%%%%%%%%%%%%%%%%%%%%%%%%%%%%%%%%%%%%%%%%%%%%%%%%%%%

%%%%%%%%%%%%%%%%%%%%%%%%%%%%%
\subsection{Upper bound for the approximation error}

We derive an upper bound for the approximation error $u-u^{\Delta}$.
We do so by first constructing a sequence of smooth subsolutions to
equation (\ref{PDE_1}) by perturbing its coefficients. As we
mentioned in the introduction, this approach, known as the
\emph{shaking coefficients technique}, was initially proposed by
Krylov \cite{Krylov1} \cite{Krylov}, and further developed by Barles
and Jakobsen \cite{BJ1} \cite{Jakobsen}.

To this end, for $\varepsilon\in[0,1]$, we extend the functions
$f:=\sigma, b$ and $H$ to
${Q}^{-\varepsilon^{2}}_{T+\varepsilon^{2}}:=[-\varepsilon^{2},T+\varepsilon^{2})\times\mathbb{R}^{n}$
and
${Q}^{-\varepsilon^{2}}_{T+\varepsilon^{2}}\times\mathbb{R}^{n}$,
respectively, so that Assumption \ref{data assumption} still holds.
We then define $f^\theta(t,x):=f(t+\tau,x+e)$ and
$H^{\theta}(t,x,p):=H(t+\tau,x+e,p)$, where $\theta=(\tau,e)$ with
$\theta\in\Theta^\varepsilon:=[-\varepsilon^2,0]\times\varepsilon
B(0,1)$, and consider the perturbed version of equation
(\ref{PDE_1}), namely,
\begin{equation}
\label{PtbedPDE} \left\{\begin{array}{r}
\displaystyle-\partial_tu^{\varepsilon}+\sup_{\theta\in\Theta^{\varepsilon}}\left\{-\frac{1}{2}\text{Trace}\left(\sigma^\theta{\sigma^\theta}^T(t,x)\partial_{xx}u^{\varepsilon}\right)-b^\theta(t,x)\cdot\partial_xu^{\varepsilon}
+H^{\theta}(t,x,\partial_xu^{\varepsilon})\right\}=0\\
\text{in}\ Q_{T+\varepsilon^2};\\
\displaystyle u^{\varepsilon}(T+\varepsilon^{2},x)=U(x)\ \text{in}\
\mathbb{R}^n.
\end{array}\right.
\end{equation}
%\begin{equation}\label{PtbedPDE}
%-\partial_tu^{\varepsilon}(t,x)+\sup_{\theta\in\Theta^{\varepsilon}}\left\{-\frac{1}{2}\text{Trace}\left(\sigma^\theta{\sigma^\theta}^T(t,x)\partial_{xx}u^{\varepsilon}(t,x)\right)-b^\theta(t,x)\cdot\partial_xu^{\varepsilon}(t,x)+H^{\theta}(t,x,\partial_xu^{\varepsilon}(t,x))\right\}=0,
%\end{equation}
%with terminal condition
%\begin{equation}\label{Ptbed_terminal}
%u^{\varepsilon}(T+\varepsilon^{2},x)=U(x),
%\end{equation}
 Note that when the
perturbation parameter $\varepsilon=0$, equations (\ref{PtbedPDE})
and (\ref{PDE_1}) coincide.

We establish existence, uniqueness and regularity results for the
HJB equation (\ref{PtbedPDE}), and a comparison between $u$ and
$u^{\varepsilon}$. Their proofs are provided in Appendix \ref{App1}.

\begin{proposition}\label{upperbound_1}
Suppose that Assumption \ref{data assumption} is satisfied. Then,
there exists a unique viscosity solution
$u^{\varepsilon}\in\mathcal{C}^{1}_b(\bar{Q}_{T+\varepsilon^2})$ of
the HJB equation (\ref{PtbedPDE}), with $|u^{\varepsilon}|_{1}\leq
C$, for some constant $C$ depending only on $M$ and $T$. Moreover,
\begin{equation}\label{est2}
 {|u-u^{\varepsilon}|}\leq C\varepsilon\ \ \text{in}\ \bar{Q}_T.
\end{equation}
\end{proposition}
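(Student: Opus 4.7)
I would break the proposition into two parts: the well-posedness and $\mathcal{C}_b^1$-regularity of $u^\varepsilon$, and then the two-sided estimate $|u-u^\varepsilon|\le C\varepsilon$, whose lower inequality will be the main obstacle. For the first part, I would mimic the proof of Proposition \ref{solutionproperty} deferred to Appendix A. The shifted family $\{(\sigma^\theta,b^\theta,H^\theta)\}_{\theta\in\Theta^\varepsilon}$ inherits Assumption \ref{data assumption} with bounds uniform in $\theta$, since translations by $\theta=(\tau,e)\in[-\varepsilon^2,0]\times\varepsilon B(0,1)$ preserve the $\mathcal{C}_b^1$-seminorms of $\sigma$ and $b$, and $\sup_\theta H^\theta$ remains convex and coercive in $p$ and inherits the space-time regularity of $H$. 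Compactness of $\Theta^\varepsilon$ together with continuity of $\theta\mapsto(\sigma^\theta,b^\theta,H^\theta)$ then allows the existence, uniqueness, and regularity arguments behind Proposition \ref{solutionproperty} to go through essentially unchanged, yielding $u^\varepsilon\in\mathcal{C}_b^1(\bar Q_{T+\varepsilon^2})$ with $|u^\varepsilon|_1\le C$ for some $C=C(M,T)$ independent of $\varepsilon$.

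The upper inequality $u^\varepsilon-u\le C\varepsilon$ on $\bar Q_T$ is then a direct PDE comparison. The pointwise bound $\sup_{\theta\in\Theta^\varepsilon}F^\theta\ge F^{\theta=0}$ makes $u^\varepsilon$ a viscosity subsolution of (\ref{PDE_1}) on $\bar Q_T$; the terminal discrepancy is controlled by $|u^\varepsilon(T,\cdot)-U|_0=|u^\varepsilon(T,\cdot)-u^\varepsilon(T+\varepsilon^2,\cdot)|_0\le[u^\varepsilon]_{1,1/2}\,\varepsilon\le C\varepsilon$, and the comparison principle for (\ref{PDE_1}) gives the conclusion.

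The reverse bound $u-u^\varepsilon\le C\varepsilon$ is the main obstacle, since $u^\varepsilon$ is only a subsolution (not a supersolution) of (\ref{PDE_1}) and no constant shift restores the correct direction. My plan is to pass through the stochastic-control representation made available by convex duality. Writing $H^\theta(t,x,p)=\sup_q\{p\cdot q-L^\theta(t,x,q)\}$ via Proposition \ref{transformlemma} (i), equation (\ref{PtbedPDE}) takes the HJB form $-\partial_t u^\varepsilon+\sup_{(\theta,q)}\{\cdots\}=0$ with joint control $(\theta,q)$, and $u^\varepsilon(t,x)$ is the value function $\inf_{\theta(\cdot),q(\cdot)}\mathbf{E}\bigl[\int_t^{T+\varepsilon^2}L^{\theta(s)}(s,X_s,q_s)\,ds+U(X_{T+\varepsilon^2})\bigr]$ of a stochastic optimal control problem with $dX=(b^\theta-q)\,ds+\sigma^\theta\,dW$. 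Let $\bar u\in\mathcal{C}_b^1(\bar Q_{T+\varepsilon^2})$ denote the solution of (\ref{PDE_1}) on the extended strip with terminal $U$ at $T+\varepsilon^2$, corresponding to the $\theta\equiv 0$ sub-problem. The uniform bounds $|\sigma^\theta-\sigma|_0+|b^\theta-b|_0\le C\varepsilon$ and $[L^\theta(\cdot,\cdot,q)-L(\cdot,\cdot,q)]_0\le C\varepsilon$ (the latter for $q$ in bounded sets, as ensured by the coercivity of $L$ from Proposition \ref{transformlemma} (iii)) follow from Assumption \ref{data assumption} together with $|\tau|^{1/2}+|e|\le 2\varepsilon$; these combine with a standard Gronwall flow estimate and Lipschitz bounds on $L$ and $U$ to yield $|u^\varepsilon-\bar u|_0\le C\varepsilon$ on $\bar Q_{T+\varepsilon^2}$. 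A further application of the comparison principle for (\ref{PDE_1}), together with the $1/2$-Hölder regularity of $\bar u$ over $[T,T+\varepsilon^2]$ giving $|\bar u(T,\cdot)-U|_0\le C\varepsilon$, shows $|\bar u-u|_0\le C\varepsilon$ on $\bar Q_T$, and the triangle inequality closes the argument.
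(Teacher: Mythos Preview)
Your proposal is correct, and for the well-posedness/regularity part it matches the paper's approach in Appendix~A closely. The estimate $|u-u^\varepsilon|\le C\varepsilon$, however, is handled differently.

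The paper does \emph{not} treat the two inequalities asymmetrically. Instead, it states and applies a single continuous dependence result (Lemma~\ref{Ptbedproperty}): if $u$ is a subsolution of (\ref{PtbedPDE}) with coefficients $(\sigma^\theta,b^\theta,H^\theta)$ and $\bar u$ is a supersolution with coefficients $(\bar\sigma^\theta,\bar b^\theta,\bar H^\theta)$, then $u-\bar u$ is bounded by the terminal mismatch plus $\sup_\theta(|\sigma^\theta-\bar\sigma^\theta|_0+|b^\theta-\bar b^\theta|_0)+\sup_\theta|H^\theta-\bar H^\theta|_0$. Viewing $u$ as the solution of (\ref{PtbedPDE}) with the \emph{constant} (in $\theta$) data $(\sigma,b,H)$, both directions $u^\varepsilon-u\le C\varepsilon$ and $u-u^\varepsilon\le C\varepsilon$ fall out of this lemma by two symmetric applications, with the coefficient perturbations bounded by $C\varepsilon$ via Assumption~\ref{data assumption}. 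So your diagnosis that ``$u-u^\varepsilon\le C\varepsilon$ is the main obstacle'' overstates the difficulty: once one has a continuous dependence lemma at the PDE level, neither direction is harder than the other.

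Your route for the lower bound---passing to the stochastic control representation, introducing the auxiliary solution $\bar u$ on the extended strip, comparing value functions via an SDE flow/Gronwall estimate and the uniform-in-$q$ Lipschitz bound $[L(\cdot,\cdot,q)]_1\le 2M$ from Proposition~\ref{transformlemma}(iii), and then comparing $\bar u$ with $u$---is a legitimate alternative and does yield the result. It has the advantage of being self-contained (no separate continuous dependence lemma needed), but it is heavier: you must justify that near-optimal controls $q$ stay in a bounded set (which you correctly flag via the coercivity of $L$), carry out the $L^2$ flow comparison carefully, and pass through the intermediate function $\bar u$. The paper's approach is more economical because the continuous dependence lemma is needed anyway for uniqueness and for the $x$-regularity of $u^\varepsilon$, so reusing it for the $\varepsilon$-estimate costs nothing extra.
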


Next,  we regularize $u^{\varepsilon}$ by a standard mollification
procedure. For this, let $\rho(t,x)$ be a $\mathbb{R}_+$-valued
smooth function with compact support $\{-1<t<0\}\times\{|x|< 1\}$
and mass $1$, and introduce the sequence of mollifiers
$\rho_{\varepsilon}$,
\begin{equation}\label{mollifer}
\rho_{\varepsilon}(t,x):=\frac{1}{\varepsilon^{n+2}}\rho\left(\frac{t}{\varepsilon^2},\frac{x}{\varepsilon}\right).
\end{equation}
For $(t,x)\in \bar{Q}_T$, we then define
$$u_{\varepsilon}(t,x)=u^{\varepsilon}*
\rho_{\varepsilon}(t,x)=\int_{-\varepsilon^2< \tau< 0}\int_{|e|<
\varepsilon}u^{\varepsilon}(t-\tau,x-e)\rho_{\varepsilon}(\tau,e)ded\tau.$$
Standard properties of mollifiers imply that
$u_{\varepsilon}\in\mathcal{C}_b^{\infty}(\bar{Q}_{T})$,
\begin{equation}\label{upperbound_2}
|u^{\varepsilon}-u_{\varepsilon}|_0\leq C\varepsilon,
\end{equation}
and, moreover, for positive integers $i$ and $j$,
\begin{equation}\label{mollifier}
|\partial_{t}^i\partial_{x}^ju_{\varepsilon}|_0\leq
C\varepsilon^{1-2i-|j|},
\end{equation}
where the constant $C$ is independent of $\varepsilon$.

We observe that the function $u^{\varepsilon}(t-\tau,x-e)$,
$(t,x)\in Q_T$, is a viscosity subsolution of equation (\ref{PDE_1})
in $Q_T$, for any {$(\tau,e)\in\Theta^{\varepsilon}$}. On the other
hand, a Riemann sum approximation shows that $u_{\varepsilon}(t,x)$
can be viewed as the limit of convex combinations of
$u^{\varepsilon}(t-\tau,x-e)$, for
{$(\tau,e)\in\Theta^{\varepsilon}$}. Since the equation in
(\ref{PDE_1}) is convex in $\partial_{x}u$, and linear in
$\partial_tu$ and $\partial_{xx}u$, the convex combinations of
$u^{\varepsilon}(t-\tau,x-e)$ are also subsolutions of (\ref{PDE_1})
in $Q_T$. Using the stability of viscosity solutions, we deduce that
$u_{\varepsilon}(t,x)$ is also a subsolution of (\ref{PDE_1}) in
$Q_T$.

%\begin{proof}
%We split the consistent error
%$\Delta\mathcal{E}(t,\Delta,u_{\varepsilon})$ into two parts as follows:
%\begin{align*}
%&|u_{\varepsilon}(t-\Delta,x)-\mathbf{S}_{t-\Delta}(\Delta)u_{\varepsilon}(t,x)+\Delta\partial_tu_{\varepsilon}(t,x)-\Delta\mathbf{L}_t u_{\varepsilon}(t,x)|_0\\
%\leq &\
%|u_{\varepsilon}(t,x)-\mathbf{S}_{t-\Delta}(\Delta)u_{\varepsilon}(t,x)-\Delta\mathbf{L}_t u_{\varepsilon}(t,x)|_0
%+
%|u_{\varepsilon}(t,x)-u_{\varepsilon}(t-\Delta,x)-\Delta\partial_tu_{\varepsilon}(t,x)|_0.
%\end{align*}
%
%The estimate of the first term has been shown in (vi) of Lemma
%\ref{semigroup}. Furthermore, by applying the estimates of the
%mollifiers (\ref{mollifier}) and keeping the worst terms involving
%$\varepsilon$, we obtain that
%\begin{equation}\label{estimate5}
%|u_{\varepsilon}(t,x)-\mathbf{S}_{t-\Delta}(\Delta)u_{\varepsilon}(t,x)-\Delta\mathbf{L}_t u_{\varepsilon}(t,x)|_0\leq C\Delta^2\varepsilon^{-3}.
%\end{equation}
%
%For the second term, using Taylor expansion, we get
%\begin{align}\label{estimate9}
%&\
%|u_{\varepsilon}(t,x)-u_{\varepsilon}(t-\Delta,x)-\Delta\partial_tu_{\varepsilon}(t,x)|_0\notag\\
%\leq&\
%|\int_{t-\Delta}^{t}\left(\partial_{t}u_{\varepsilon}(t,x)-\int_v^{t}\partial_{tt}u_{\varepsilon}(u,x)du\right)dv-
%\Delta\partial_tu_{\varepsilon}(t,x)|_0\notag\\
%\leq&\ C\Delta^2|\partial_{tt}u_{\varepsilon}|_0\leq
%C\Delta^2\varepsilon^{-3}.
%\end{align}
%
%Hence, the consistent error estimate (\ref{estimate4}) follows from
%(\ref{estimate5}) and (\ref{estimate9}).
%\end{proof}

We are now ready to establish an upper bound for the approximation
{error}.

\begin{theorem}\label{theorem_error_1}
Suppose that Assumption \ref{data assumption} holds. Let
$u^{\Delta}\in\mathcal{C}_b(\bar{Q}_{T})$ satisfy the approximation
scheme (\ref{semischeme}) and $u\in\mathcal{C}_b^1(\bar{Q}_{T})$ be
the unique viscosity solution of equation (\ref{PDE_1}). Then, there
exists a constant $C$, depending only on $M$ and $T$, such that
$$u-u^{\Delta}\leq C\Delta^{\frac14}\ \ \text{in}\ \bar{Q}_{T}.$$
\end{theorem}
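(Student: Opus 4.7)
The plan is to follow the shaking coefficients technique: approximate the viscosity solution $u$ by a family of smooth subsolutions $u_\varepsilon$ of (\ref{PDE_1}), then apply the scheme's consistency and comparison principles to compare $u_\varepsilon$ with $u^{\Delta}$, and finally optimize the error in $\varepsilon$. Concretely, starting from $u^{\varepsilon}$ solving the perturbed equation (\ref{PtbedPDE}), the mollification $u_\varepsilon = u^\varepsilon * \rho_\varepsilon$ lies in $\mathcal{C}_b^{\infty}(\bar{Q}_T)$, is a (smooth) viscosity subsolution of (\ref{PDE_1}) by the discussion following (\ref{mollifier}), and satisfies $|u - u_\varepsilon|_0 \le |u - u^\varepsilon|_0 + |u^\varepsilon - u_\varepsilon|_0 \le C\varepsilon$ via (\ref{est2}) and (\ref{upperbound_2}).

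Next, I would apply the consistency estimate of Proposition \ref{scheme property}(iv) to $u_\varepsilon$. Since $u_\varepsilon$ is a subsolution, $-\partial_t u_\varepsilon + \mathbf{L}_t u_\varepsilon \le 0$ in $Q_T$, and (\ref{consistant_error}) gives
\begin{equation*}
S(\Delta,t,x,u_\varepsilon(t,x),u_\varepsilon(t+\Delta,\cdot)) \le C\Delta\bigl(|\partial_{tt}u_\varepsilon|_0 + |\partial_{xxxx}u_\varepsilon|_0 + |\partial_{xxt}u_\varepsilon|_0 + \mathcal{R}(u_\varepsilon)\bigr).
\end{equation*}
The mollifier bounds (\ref{mollifier}) yield $|\partial_{tt}u_\varepsilon|_0,\ |\partial_{xxxx}u_\varepsilon|_0,\ |\partial_{xxt}u_\varepsilon|_0 \le C\varepsilon^{-3}$ (the terms in $\mathcal{R}$ are of the same or lower order in $\varepsilon^{-1}$), so
\begin{equation*}
S(\Delta,t,x,u_\varepsilon,u_\varepsilon(t+\Delta,\cdot)) \le C\Delta\varepsilon^{-3} \quad \text{in } \bar{Q}_{T-\Delta}.
\end{equation*}
On the other hand $S(\Delta,t,x,u^\Delta,u^\Delta(t+\Delta,\cdot)) = 0$ in $\bar{Q}_{T-\Delta}$ by definition of the scheme.

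I would then invoke the scheme comparison result Proposition \ref{schemecomparison} with $u_\varepsilon$ as subsolution-side and $u^\Delta$ as supersolution-side to obtain
\begin{equation*}
u_\varepsilon - u^\Delta \le \sup_{\bar{Q}_T \setminus \bar{Q}_{T-\Delta}} (u_\varepsilon - u^\Delta)^+ + T\, C\Delta\varepsilon^{-3} \quad \text{in } \bar{Q}_T.
\end{equation*}
On the boundary layer $\bar{Q}_T\setminus\bar{Q}_{T-\Delta}$, Lemma \ref{errorsmall} gives $|u - u^\Delta| \le C\sqrt{\Delta}$, and combining with $|u-u_\varepsilon|_0 \le C\varepsilon$ yields $u_\varepsilon - u^\Delta \le C(\varepsilon + \sqrt{\Delta})$ there. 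Collecting and using $|u - u_\varepsilon|_0 \le C\varepsilon$ once more,
\begin{equation*}
u - u^\Delta \le C\varepsilon + C\sqrt{\Delta} + C\Delta\varepsilon^{-3} \quad \text{in } \bar{Q}_T.
\end{equation*}

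Finally, balancing the two dominant terms $\varepsilon$ and $\Delta\varepsilon^{-3}$ by choosing $\varepsilon = \Delta^{1/4}$ gives $u - u^\Delta \le C\Delta^{1/4}$ (noting that $\sqrt{\Delta} \le \Delta^{1/4}$ for small $\Delta$, while for $\Delta$ of order one the bound is absorbed into the constant by stability). The main conceptual obstacle is justifying that $u_\varepsilon$ is genuinely a subsolution of (\ref{PDE_1}), which rests crucially on the convexity of $H$ in $p$ together with the fact that $\partial_t$ and $\partial_{xx}$ enter linearly so that convex combinations of the shifted subsolutions $u^\varepsilon(t-\tau,x-e)$ are still subsolutions; the excerpt already provides this argument immediately after (\ref{mollifier}), so the remainder of the proof is a careful bookkeeping of $\varepsilon$-powers and an optimization.
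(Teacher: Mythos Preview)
Your proposal is correct and follows essentially the same route as the paper: construct the smooth subsolution $u_\varepsilon=u^\varepsilon*\rho_\varepsilon$ via shaking coefficients and mollification, plug it into the consistency estimate (\ref{consistant_error}) together with the derivative bounds (\ref{mollifier}) to get $S(\Delta,t,x,u_\varepsilon,u_\varepsilon(t+\Delta,\cdot))\le C\Delta\varepsilon^{-3}$, apply the scheme comparison Proposition \ref{schemecomparison}, control the boundary layer via Lemma \ref{errorsmall}, and optimize with $\varepsilon=\Delta^{1/4}$. The only cosmetic difference is the order in which you invoke Lemma \ref{errorsmall} and the bound $|u-u_\varepsilon|\le C\varepsilon$ when handling the term $\sup_{\bar{Q}_T\setminus\bar{Q}_{T-\Delta}}(u_\varepsilon-u^\Delta)^+$.
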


\begin{proof}
Substituting $u_{\varepsilon}$ into the consistency error estimate
(\ref{consistant_error}) and using (\ref{mollifier}) give
\begin{align*}\label{estimate4}
%\mathcal{E}(t,\Delta,u_{\varepsilon}):=&\
&\left|-\partial_tu_{\varepsilon}(t,x)+{\mathbf{L}_t}u_{\varepsilon}(t,x)-S(\Delta,t,x,u_{\varepsilon}(t,x),u_{\varepsilon}(t+\Delta,\cdot))\right|\notag\\
\leq&\
C\Delta\left(|\partial_{tt}u_{\varepsilon}|_{0}+|\partial_{xxxx}u_{\varepsilon}|_0+|\partial_{xxt}u_{\varepsilon}|_{0}+\mathcal{R}(u_{\varepsilon})\right)
\leq C\Delta\varepsilon^{-3},
\end{align*}
for $(t,x)\in\bar{Q}_{T-\Delta}$. Since $u_{\varepsilon}$ is a
subsolution of (\ref{PDE_1}) in $Q_T$, we  have
\begin{equation*}
S(\Delta,t,x,u_{\varepsilon}(t,x),u_{\varepsilon}(t+\Delta,\cdot))\leq
C\Delta\varepsilon^{-3},
\end{equation*}
for $(t,x)\in\bar{Q}_{T-\Delta}$. Furthermore, by the definition of
the approximation scheme (\ref{semischeme}), we also have
$$S(\Delta,t,x,u^{\Delta}(t,x),u^{\Delta}(t+\Delta,\cdot))=0,$$
for $(t,x)\in\bar{Q}_{T-\Delta}$. In turn, Proposition
\ref{schemecomparison} implies
$$u_{\varepsilon}-u^{\Delta}\leq \sup_{\bar{Q}_{T}\backslash \bar{Q}_{T-\Delta}}(u_{\varepsilon}-u^{\Delta})^{+}+C(T-t)\Delta\varepsilon^{-3}\ \ \text{in}\ \bar{Q}_T.$$
Next, using estimates (\ref{est2}) and (\ref{upperbound_2}), we
obtain that $|u-u_{\varepsilon}|\leq C\varepsilon$ and, thus,
\begin{align*}
u-u^{\Delta}&=(u-u_{\varepsilon})+(u_{\varepsilon}-u^{\Delta})\\
&\leq C\varepsilon+\sup_{\bar{Q}_{T}\backslash
\bar{Q}_{T-\Delta}}(u_{\varepsilon}-u^{\Delta})^{+}+C(T-t)\Delta\varepsilon^{-3}\\
&\leq \sup_{\bar{Q}_{T}\backslash
\bar{Q}_{T-\Delta}}(u-u^{\Delta})^{+}+C(\varepsilon+\Delta\varepsilon^{-3})\
\ \text{in}\ \bar{Q}_T.
\end{align*}
By choosing $\varepsilon={\Delta}^{\frac14}$, we further deduce that
$$u-u^{\Delta}\leq \sup_{\bar{Q}_{T}\backslash \bar{Q}_{T-\Delta}}(u-u^{\Delta})^{+}+C\Delta^{\frac14}\ \ \text{in}\ \bar{Q}_T.$$
We conclude using estimate (\ref{estimate_final_interval}) in Lemma
\ref{errorsmall}.
\end{proof}

%%%%%%%%%%%%%%%%%%%%%%%%%%%%%%%%%%%%%%%%%%%%%%%%%%%%%%%%%%%%%%%%%%%%%%%%%%
\subsection{Lower bound for the approximation {error}}

{To obtain a lower bound of $u-u^{\Delta}$, we cannot follow the
above perturbation procedure to construct approximate smooth
supersolutions to equation (\ref{PDE_1}). This is because if we
perturb its coefficients to obtain a viscosity supersolution, its
convolution with the mollifier may no longer be a supersolution due
to the convexity of equation (\ref{PDE_1}) with respect to its
terms. Furthermore, interchanging the roles (as in \cite{HL1}) of
equation (\ref{PDE_1}) and its approximation scheme
(\ref{semischeme}) does not work either, because the solution
$u^{\Delta}$ of the approximation scheme (and its perturbation
solution) may in general lose the H\"older and Lipschitz continuity
in $(t,x)$. This is due to the lack of the continuous dependence
result for the approximation scheme, compared with the continuous
dependence result for equation (\ref{PDE_1}) and its perturbation
equation (\ref{PtbedPDE}) (see Lemma \ref{Ptbedproperty}).}

To overcome these difficulties, we follow the idea of Barles and
Jakobsen \cite{BJ} to build approximate supersolutions which are
smooth at the ``right points'' by introducing an appropriate optimal
switching stochastic control system. To apply this method to the
problem herein, we first observe that, using the convex dual
function $L$ introduced in (\ref{L}), we can write equation
(\ref{PDE_1}) as a HJB equation, namely,
\begin{equation}
\label{HJBeq} \left\{\begin{array}{ll}
\displaystyle -\partial_{t}u+\sup_{q\in\mathbb{R}^{n}}\mathcal{L}^{q}\left(t,x,\partial_{x}u,\partial_{xx}u\right)=0&\text{in}\  Q_T;\\
\displaystyle u(T,x)=U(x)&\text{in}\ \mathbb{R}^n,
\end{array}\right.
\end{equation}
%\begin{equation}\label{HJBeq}
%-\partial_{t}u(t,x)+\sup_{q\in\mathbb{R}^{n}}\mathcal{L}^{q}\left(t,x,\partial_{x}u(t,x),\partial_{xx}u(t,x)\right)=0,
%\end{equation}
with
\begin{equation*}
\mathcal{L}^{q}(t,x,p,X):=-\frac{1}{2}\text{Trace}\left(\sigma\sigma^T(t,x)X\right)-(b(t,x)-q)\cdot
p-L(t,x,q).
\end{equation*}
It then follows from Proposition \ref{transformlemma} (iv) that the
supremum can be achieved at some point, say $q^{*}$, with
$|q^{*}|\leq\xi(|\partial_{x}u|)$. Furthermore, Proposition
\ref{solutionproperty} implies that $|q^{*}|\leq C$, for some
constant $C$ depending only on $M$ and $T$.
%This in turn suggests that $q^{*}$ is bounded uniformly by some constant $C$ depending only on $T$, $M$, and the Legendre transform $L$.
Thus, we rewrite the equation  in (\ref{HJBeq}) as
\begin{equation*}
-\partial_{t}u+\sup_{q\in
K}\mathcal{L}^{q}\left(t,x,\partial_{x}u,\partial_{xx}u\right)=0,
\end{equation*}
where $K\subset\mathbb{R}^{n}$ is a compact set. Since  $K$ is
separable, it has a countable dense subset, say
$K_{\infty}=\{q_{1},q_{2},q_{3},...\}$ and, in turn, the continuity
of $\mathcal{L}^{q}$ in $q$ implies that
\begin{equation*}
\sup_{q\in K}\mathcal{L}^{q}(t,x,p,X)=\sup_{q\in
K_{\infty}}\mathcal{L}^{q}(t,x,p,X).
\end{equation*}
Therefore, the equation in (\ref{HJBeq}) further reduces to
\begin{equation*}
-\partial_{t}u+\sup_{q\in
K_{\infty}}\mathcal{L}^{q}\left(t,x,\partial_{x}u,\partial_{xx}u\right)=0.
\end{equation*}

For $m\geq 1$, we now consider the approximations of (\ref{HJBeq}),
\begin{equation}
\label{finiteHJB} \left\{\begin{array}{ll}
\displaystyle-\partial_{t}u^{m}+\sup_{q\in
{K}_{m}}\mathcal{L}^{q}\left(t,x,\partial_{x}u^{m},\partial_{xx}u^{m}\right)=0&\text{in}\  Q_T;\\
\displaystyle u^{m}(T,x)=U(x)&\text{in}\ \mathbb{R}^n,
\end{array}\right.
\end{equation}
%\begin{equation}\label{finiteHJB}
%-\partial_{t}u^{m}(t,x)+\sup_{q\in
%{K}_{m}}\mathcal{L}^{q}\left(t,x,\partial_{x}u^{m}(t,x),\partial_{xx}u^{m}(t,x)\right)=0,
%\end{equation}
%with terminal condition
%\begin{equation}\label{finiteHJB_terminal}
%u^{m}(T,x)=U(x),
%\end{equation}
where ${K}_{m}:=\{q_{1},...,q_{m}\}\subset K_{\infty}$, i.e. $K_m$
consists of the first $m$ points in $K_{\infty}$ and satisfies
$\cup_{m\geq 1}K_m=K_{\infty}$. It then follows from Proposition 2.1
of \cite{BJ} that (\ref{finiteHJB}) admits a unique viscosity
solution $u^m\in\mathcal{C}_b^1(\bar{Q}_T)$, with $|u^m|_{1}\leq C$,
for some constant $C$ depending only on $M$ and $T$. Furthermore,
Arzela-Ascoli's theorem yields that there exists a subsequence of
$\{u^{m}\}$, still denoted as $\{u^{m}\}$, such that, as
$m\rightarrow\infty$,
\begin{equation}\label{lemma1}
u^m(t,x)\rightarrow u(t,x)\ \ \text{uniformly\ in}\
(t,x)\in\bar{Q}_{T}.
\end{equation}

%\begin{lemma}\label{lemma1}
%Suppose that Assumption \ref{data assumption} is satisfied. Then,
%there exists a unique viscosity solution $u^m$, with
%$u^m\in\mathcal{C}_b^1(\bar{Q}_T)$, of the HJB equation
%(\ref{switching})-(\ref{switching_terminal}), with $|u^m|_{1}\leq C$ depending only on $M$ and
%$T$. Moreover, $u^m\rightarrow u$ uniformly in $(t,x)\in\bar{Q}_{T}$
%as $m\rightarrow\infty$.
%\end{lemma}
%
%\begin{proof}
%We first show that we can indeed use $u^{m}$ in (\ref{finiteHJB}) to
%approximate $u$. This is true since by standard regularity results
%for HJB equations (see Proposition 2.1 of \cite{BJ}), we have
%$$|u^{m}|_{1}\leq C$$
%where the constant $C$ depends only on $M$ and $T$. Then by
%Arzela-Ascoli's theorem, there exists a subsequence of
%$\{u^{m}\}_{m}$, also denoted by $\{u^{m}\}_{m}$, converges locally
%uniformly. Obviously, this limit function satisfies
%$$
%-\partial_{t}u(t,x)+\sup_{q\in
%K_{\infty}}\mathcal{L}^{q}(t,x,\partial_{x}u,\partial_{xx}u)(t,x)=0,
%$$
%\end{proof}

Next, we construct a sequence of (local) smooth supersolutions to
approximate $u^m$. For this, we consider the optimal switching
system
\begin{equation}
\label{switching} \left\{\begin{array}{ll}
\displaystyle\max\left\{-\partial_tv_i+\mathcal{L}^{q_{i}}(t,x,\partial_{x}v_i,\partial_{xx}v_i),v_i-\mathcal{M}^{k}_{i}v
\right\}=0&\text{in}\  Q_T;\\
\displaystyle v_i(T,x)=U(x)&\text{in}\ \mathbb{R}^n,
\end{array}\right.
\end{equation}
%\begin{equation}\label{switching}
%\max\left\{-\partial_tv_i+\mathcal{L}^{q_{i}}(t,x,\partial_{x}v_i,\partial_{xx}v_i),v_i-\mathcal{M}^{k}_{i}v
%\right\}=0,
%\end{equation}
%with terminal condition
%\begin{equation}\label{switching_terminal}
%v_i(T,x)=U(x),
%\end{equation}
where $i\in\mathcal{I}:=\{1,...,m\}$ and
$\mathcal{M}^{k}_{i}v:=\min_{j\neq i,\ j\in\mathcal{I}}\{v_{j}+k\},$
for some constant $k>0$ representing the switching cost.

%We establish the following well-posedness and regularity results for
%the optimal switching system (\ref{switching}), and as well as a
%comparison result for its solution $v_i$ and the approximation
%$u^m$.

\begin{proposition}\label{proposition}
Suppose that Assumption \ref{data assumption} is satisfied. Then,
there exists a unique viscosity solution $v=(v_1,\dots,v_m)$ of the
optimal switching system (\ref{switching}) such that $|v|_{1}\leq
C$, for some constant $C$ depending only on $M$ and $T$. Moreover,
for $i\in\mathcal{I}$,
\begin{equation}\label{optimal_switching_approx}
0\leq v_{i}-u^m\leq C(k^{\frac13}+k^{\frac23})\ \ \text{in}\
\bar{Q}_{T}.
\end{equation}
\end{proposition}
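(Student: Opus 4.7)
The plan has three stages: existence and Lipschitz regularity of $v$; the immediate lower bound $v_i\geq u^m$; and the delicate upper bound.

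\emph{Existence, uniqueness and regularity.} I would construct $v$ via a Picard-type iteration on obstacle problems. Set $v^{(0)}_i\equiv U$ for all $i\in\mathcal{I}$, and for $n\geq 1$ let $v^{(n)}_i$ be the unique viscosity solution of
\[
\max\bigl\{-\partial_t v^{(n)}_i+\mathcal{L}^{q_i}(t,x,\partial_x v^{(n)}_i,\partial_{xx}v^{(n)}_i),\; v^{(n)}_i-\mathcal{M}^k_i v^{(n-1)}\bigr\}=0\quad\text{in }Q_T,
\]
with terminal datum $U$. The coefficients of each $\mathcal{L}^{q_i}$ are uniformly Lipschitz in $(t,x)$ by Proposition~\ref{transformlemma}(iii), and $|q_i|\leq C$ by the reduction preceding (\ref{finiteHJB}); standard parabolic obstacle theory then produces $v^{(n)}_i\in\mathcal{C}_b^1(\bar{Q}_T)$ with $|v^{(n)}_i|_1\leq C$ independent of $n$. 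A straightforward inductive comparison shows the iterates are monotone nondecreasing, so Arzel\`a-Ascoli combined with the stability of viscosity solutions yields a limit $v$ satisfying (\ref{switching}) with $|v|_1\leq C$. Uniqueness follows from the comparison principle for switching systems (Appendix~A of \cite{BJ}).

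\emph{The lower bound.} The constant vector $(u^m,\dots,u^m)$ is a viscosity subsolution of (\ref{switching}): using (\ref{finiteHJB}),
\[
-\partial_t u^m+\mathcal{L}^{q_i}(t,x,\partial_x u^m,\partial_{xx}u^m)\leq -\partial_t u^m+\sup_{q\in K_m}\mathcal{L}^{q}(t,x,\partial_x u^m,\partial_{xx}u^m)=0,
\]
while $\mathcal{M}^k_i u^m=u^m+k$, giving $u^m-\mathcal{M}^k_i u^m=-k\leq 0$. Comparison for (\ref{switching}), applied to $(u^m,\dots,u^m)$ and $v$ (both matching $U$ at $T$), then yields $u^m\leq v_i$ in $\bar{Q}_T$, which is the left-hand inequality in (\ref{optimal_switching_approx}).

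\emph{The upper bound, which is the main difficulty.} Here I would adapt the Barles-Jakobsen regularization machinery. First shake the coefficients: for $\varepsilon\in(0,1)$, replace $\sigma,b,H$ inside each $\mathcal{L}^{q_i}$ by $\sigma^\theta,b^\theta,H^\theta$ as in Section~3.1 and take a supremum over $\theta\in\Theta^\varepsilon$; continuous dependence for the perturbed switching system gives a solution $v^\varepsilon$ with $|v-v^\varepsilon|_0\leq C\varepsilon$. Mollifying $v^\varepsilon_{i,\varepsilon}:=v^\varepsilon_i*\rho_\varepsilon$ with the kernel (\ref{mollifer}) yields smooth components satisfying $|\partial_t^a\partial_x^b v^\varepsilon_{i,\varepsilon}|_0\leq C\varepsilon^{1-2a-|b|}$, and the shaking ensures each $v^\varepsilon_{i,\varepsilon}$ is a classical supersolution of the \emph{original} switching system (\ref{switching}); in particular $-\partial_t v^\varepsilon_{i,\varepsilon}+\mathcal{L}^{q_i}(t,x,\partial_x v^\varepsilon_{i,\varepsilon},\partial_{xx}v^\varepsilon_{i,\varepsilon})\geq 0$ for every $i$. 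The switching obstacle forces $|v_i-v_j|\leq k$, and this carries over to the mollified components up to $C\varepsilon$ errors. At each $(t,x)$, letting $i^*$ realize the sup in (\ref{finiteHJB}), I would replace derivatives of $v^\varepsilon_{i^*,\varepsilon}$ by those of $v^\varepsilon_{i,\varepsilon}$ with an error controlled by $Ck|\partial_{xx}v^\varepsilon_{i,\varepsilon}|_0\leq Ck/\varepsilon$ (and a first-order mismatch term of order $k/\varepsilon^2$). This shows $v^\varepsilon_{i,\varepsilon}$ is an almost-supersolution of (\ref{finiteHJB}) with consistency error $O(\varepsilon+k/\varepsilon+k/\varepsilon^2)$, and the comparison principle for (\ref{finiteHJB}) gives $v_i-u^m\leq C(\varepsilon+k/\varepsilon+k/\varepsilon^2)$. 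Optimizing with $\varepsilon\sim k^{1/3}$ produces the stated rate $C(k^{1/3}+k^{2/3})$. The principal obstacle is the rigorous verification that the mollified switching components become almost-supersolutions of the reduced HJB equation (\ref{finiteHJB}); this step relies crucially on the linearity of each $\mathcal{L}^{q_i}$ in $(\partial_x v,\partial_{xx}v)$ together with the Lipschitz bounds from the first stage.
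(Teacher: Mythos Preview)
Your existence sketch and the lower bound are fine and match the standard route (the paper itself omits the proof, referring to Proposition~2.1 and Theorem~2.3 of \cite{BJ}). The upper bound argument, however, has a genuine gap: the sub/supersolution direction is reversed.

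You assert that after shaking and mollifying, each $v^\varepsilon_{i,\varepsilon}$ satisfies $-\partial_t v^\varepsilon_{i,\varepsilon}+\mathcal{L}^{q_i}(\cdot)\geq 0$ classically. But being a \emph{supersolution} of the switching system means $\max\{A_i,B_i\}\geq 0$, which does \emph{not} force $A_i\geq 0$ outside the continuation region $\{B_i<0\}$; your ``in particular'' step is unjustified. Worse, even granting the inequality, your conclusion that $v^\varepsilon_{i,\varepsilon}$ is an almost-supersolution of (\ref{finiteHJB}) yields, via comparison, only $u^m-v_i\leq C(\varepsilon+k\varepsilon^{-2})$ --- the trivial direction you already had --- not the upper bound you claim.

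The Barles--Jakobsen argument runs the other way. From the \emph{subsolution} property of (\ref{switching}), $\max\{A_j,B_j\}\leq 0$ gives $-\partial_t v_j+\mathcal{L}^{q_j}(v_j)\leq 0$ in the viscosity sense \emph{everywhere}, for every $j$. Shake with $\sup_\theta$ and mollify to obtain smooth $\tilde v_j$ with $-\partial_t\tilde v_j+\mathcal{L}^{q_j}(\tilde v_j)\leq 0$ classically and $|v_j-\tilde v_j|_0\leq C\varepsilon$. Fixing $i$ and using $|v^\varepsilon_i-v^\varepsilon_j|_0\leq k$ together with the mollifier bounds $|\partial_t(\cdot)*\rho_\varepsilon|_0,|\partial_{xx}(\cdot)*\rho_\varepsilon|_0\leq C|\cdot|_0\varepsilon^{-2}$, one gets for every $j$
\[
-\partial_t\tilde v_i+\mathcal{L}^{q_j}(\tilde v_i)
=\bigl[-\partial_t\tilde v_j+\mathcal{L}^{q_j}(\tilde v_j)\bigr]+O(k\varepsilon^{-2})\leq Ck\varepsilon^{-2}.
\]
Thus $\tilde v_i$ is a classical \emph{subsolution} of (\ref{finiteHJB}) up to $Ck\varepsilon^{-2}$, comparison gives $\tilde v_i\leq u^m+CTk\varepsilon^{-2}$, and hence $v_i-u^m\leq C(\varepsilon+k\varepsilon^{-2})$; optimizing $\varepsilon\sim k^{1/3}$ yields (\ref{optimal_switching_approx}). (Incidentally, your derivative-difference bookkeeping is inverted: the $\partial_t$ and $\partial_{xx}$ mismatches cost $k\varepsilon^{-2}$, the $\partial_x$ mismatch $k\varepsilon^{-1}$.)
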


%In fact, as $k\to0$, every component of $v$ converges locally
%uniformly to the solution of the following HJB equation:
%\begin{equation}\label{HJBeq2}
%-\partial_{t}u+\sup_{q\in
%\mathcal{A}}\mathcal{L}^{q}(t,x,\partial_{x}u,\partial_{xx}u)=0\
%\text{in}\ Q_{T}
%\end{equation}
%with terminal condition $u(T,x)=U(x)$, where
%$\mathcal{A}=\cup_{i=1}^{m}\mathcal{A}_{i}$. Note that, by Remark
%\ref{transform}, if we set $\mathcal{A}=K$, the solution $u$ is
%exactly the solution of our original PDE (\ref{PDE_1}). More
%specificly, we have the following error estimate result:
%\begin{theorem}\label{errorboundswitching}
%Under Assumption \ref{data assumption}, if
%$\mathcal{A}=\cup_{i=1}^{m}\mathcal{A}_{i}\subset\mathbb{R}^{n}$ is
%a compact set, and let $u$ and $v$ be the solutions of
%(\ref{HJBeq2}) with (\ref{terminal}) and (\ref{switching})-(\ref{switching_terminal})
%respectively, then for any $k>0$, there exists a constant $C$ only
%depending only $T$, $M$ and $\mathcal{A}$ such that
%$$0\leq v_{i}-u\leq C(k^{\frac13}+k^{\frac23}), \ \ \text{in}\ Q_{T}, \ \ \ i\in\mathcal{I}.$$
%\end{theorem}

The proof essentially follows from Proposition 2.1 and Theorem 2.3
of \cite{BJ} and it is thus omitted. We only remark that since we do
not require the switching cost to satisfy $k\leq 1$, we keep the
term $k^{\frac23}$ in the above estimate. This will not affect the
convergence rate of the approximation scheme.

Next, still following the approach of \cite{BJ}, we construct smooth
approximations of $v_i$. Since in the continuation region of
(\ref{switching}), the solution $v_i$ satisfies the \emph{linear}
equation, namely,
$$-\partial_tv_i+\mathcal{L}^{q_{i}}(t,x,\partial_{x}v_i,\partial_{xx}v_i)=0\ \ \text{in}\ \{(t,x)\in{Q}_T:v_i(t,x)<\mathcal{M}_i^{k}v(t,x)\},$$
we may perturb its coefficients to obtain a sequence of smooth
supersolutions. This will in turn give a lower bound of the error
$u^m-u^{\Delta}$. A subtle point herein is how to identify the
continuation region by appropriately choosing the switching cost
$k$. For this, we follow the idea used in Lemma 3.4 of \cite{BJ}.

\begin{proposition}\label{lowbd1}
Suppose that Assumption \ref{data assumption} holds. Let
$u^{\Delta}\in\mathcal{C}_b(\bar{Q}_{T})$ satisfy the approximation
scheme (\ref{semischeme}) and $u^m\in\mathcal{C}_b^1(\bar{Q}_{T})$
be the unique viscosity solution of the HJB equation
(\ref{finiteHJB}). Then, there exists a constant $C$, depending only
on $M$ and $T$, such that
$$u^{\Delta}-u^{m}\leq \sup_{\bar{Q}_{T}\backslash\bar{Q}_{T-\Delta}}(u^{\Delta}-u^{m})^{+}+C\Delta^{\frac{1}{10}}\ \ \text{in}\ \bar{Q}_{T}.$$
\end{proposition}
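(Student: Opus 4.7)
The plan is to adapt the lower-bound argument of Barles--Jakobsen \cite{BJ}. One cannot directly shake the coefficients of the convex HJB equation (\ref{finiteHJB}) and then mollify to obtain a smooth supersolution, because convex combinations of supersolutions of a convex equation generally fail to remain supersolutions. The remedy is to replace $u^m$ by the optimal switching approximation $v=(v_1,\ldots,v_m)$ from (\ref{switching}), whose components solve \emph{linear} PDEs in their continuation regions, for which the shake-and-mollify recipe \emph{does} preserve the supersolution property; the $v_i$'s are then glued together by tracking the appropriate index.

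Concretely, I would first invoke Proposition \ref{proposition} to control $0\le v_i-u^m\le Ck^{1/3}$ for a switching cost $k>0$ to be optimised later. For each fixed $i$, I would then apply the shaking-coefficients procedure to the linear operator $-\partial_t+\mathcal{L}^{q_i}$ on an enlarged cylinder, exactly as the coefficients are extended in section 3.1: this yields an auxiliary function $v_i^\varepsilon$ solving a perturbed HJB equation with the supremum taken over $\theta\in\Theta^\varepsilon$, which satisfies $|v_i^\varepsilon-v_i|\le C\varepsilon$ on $\bar Q_T$ (analogue of Proposition \ref{upperbound_1} for the switching system). Convolving with the mollifier $\rho_\varepsilon$ from (\ref{mollifer}) then produces a smooth $v_{i,\varepsilon}$ satisfying the usual bounds $|\partial_t^a\partial_x^b v_{i,\varepsilon}|_0\le C\varepsilon^{1-2a-|b|}$. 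Because $\mathcal{L}^{q_i}$ is \emph{linear} in $(\partial_x,\partial_{xx})$, mollification preserves the supersolution property on the continuation region $\{v_i<\mathcal{M}_i^k v\}$, and the elementary comparison $\mathbf{L}_t\ge -\partial_t+\mathcal{L}^{q_i}$, which follows from $H(t,x,p)\ge p\cdot q_i-L(t,x,q_i)$, then shows that $v_{i,\varepsilon}$ is a local classical supersolution of (\ref{PDE_1}) there. Feeding $v_{i,\varepsilon}$ into the consistency estimate in Proposition \ref{scheme property} (iv) with the above derivative bounds therefore gives
\begin{equation*}
S(\Delta,t,x,v_{i,\varepsilon}(t,x),v_{i,\varepsilon}(t+\Delta,\cdot))\ge -C\Delta\varepsilon^{-3}
\end{equation*}
at every $(t,x)$ where one remains in the continuation region of the chosen index.

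Following \cite[Lemma~3.4]{BJ}, I would then select at each $(t,x)$ an index $i(t,x)$ that attains $\min_j v_j(t,x)$ up to the switching cost, so that $(t,x)$ lies in the continuation region of $v_{i(t,x)}$, while the values of $v_{j,\varepsilon}(t+\Delta,\cdot)$ for $j\neq i(t,x)$ are controlled through $v_{i(t,x)}\le v_j+k$. Invoking Proposition \ref{schemecomparison}, combining with $|v_{i,\varepsilon}-v_i|\le C\varepsilon$ and $v_i-u^m\le Ck^{1/3}$, and handling the slab $\bar Q_T\setminus\bar Q_{T-\Delta}$ as in Lemma \ref{errorsmall} yields a bound of the schematic form
\begin{equation*}
u^\Delta-u^m\le \sup_{\bar Q_T\setminus\bar Q_{T-\Delta}}(u^\Delta-u^m)^+ +C\bigl(\varepsilon+\Delta\varepsilon^{-3}+k^{1/3}+k/\varepsilon^2\bigr),
\end{equation*}
where the last term absorbs the near-boundary, index-mismatch error. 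Optimising $\varepsilon$ and $k$ against $\Delta$ then delivers the exponent $1/10$. The main obstacle is precisely this last index-tracking step: the scheme $S$ is non-local in time, so the index optimal at $(t,x)$ may differ from the one optimal at $(t+\Delta,\cdot)$, and the switching cost $k$ is the device used to absorb this mismatch. It is exactly this loss that widens the gap between the upper-bound exponent $1/4$ in Theorem \ref{theorem_error_1} and the $1/10$ claimed here.
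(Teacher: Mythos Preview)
Your overall strategy---replace $u^m$ by the switching system, shake and mollify each $v_i$, exploit that $v_i$ satisfies a \emph{linear} equation in its continuation region---is exactly the paper's. But the two key implementation steps are handled differently, and in your version each carries a gap.

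\textbf{The index/time-nonlocality step.} You propose to select an index $i(t,x)$ pointwise and then pay an ``index-mismatch'' penalty $k/\varepsilon^2$ because the scheme looks at $t+\Delta$. The paper avoids this entirely by working with the single function $w_\varepsilon:=\min_{i\in\mathcal I} v_{i,\varepsilon}$. If $j=\arg\min_i v_{i,\varepsilon}(t,x)$, then $w_\varepsilon(t,x)=v_{j,\varepsilon}(t,x)$ while $w_\varepsilon(t+\Delta,\cdot)\le v_{j,\varepsilon}(t+\Delta,\cdot)$, so monotonicity of $S$ in its last argument (Proposition~\ref{scheme property}~(ii)) gives
\[
S(\Delta,t,x,w_\varepsilon(t,x),w_\varepsilon(t+\Delta,\cdot))\ge S(\Delta,t,x,v_{j,\varepsilon}(t,x),v_{j,\varepsilon}(t+\Delta,\cdot))\ge -C\Delta\varepsilon^{-3},
\]
with \emph{no} extra error. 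Your formulation, with a $(t,x)$-dependent comparison function, cannot be fed directly into Proposition~\ref{schemecomparison}.

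\textbf{The role of $k$.} The switching cost is not there to absorb the time-nonlocality; it is there to make the continuation region thick enough for mollification. For $v_{j,\varepsilon}$ to satisfy the supersolution inequality at $(t,x)$ one needs $v_j^\varepsilon$ to be in its continuation region at every $(t-\tau,x-e)$ with $(\tau,e)\in\Theta^\varepsilon$. Since $j$ minimises $v_{i,\varepsilon}(t,x)$ one has $v_{j,\varepsilon}(t,x)-\mathcal M_j^k v_\varepsilon(t,x)\le -k$; transferring this to $v_j^\varepsilon$ at points within distance $\varepsilon$ costs $O(\varepsilon)$ by the uniform $|\cdot|_1$ bounds, forcing $k\ge C\varepsilon$. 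The paper takes $k=4C\varepsilon$, so the error becomes $\varepsilon+\varepsilon^{1/3}+\varepsilon^{2/3}+\Delta\varepsilon^{-3}$, and balancing $\varepsilon^{1/3}$ against $\Delta\varepsilon^{-3}$ gives $\varepsilon=\Delta^{3/10}$ and the rate $\Delta^{1/10}$. Your error form $\varepsilon+\Delta\varepsilon^{-3}+k^{1/3}+k/\varepsilon^2$, optimised freely over $k$, would yield $k\sim\varepsilon^3$ and hence $\Delta^{1/4}$, not $\Delta^{1/10}$; the constraint $k\gtrsim\varepsilon$ is precisely what degrades the rate, and you have not identified it.
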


\begin{proof}
Let $\varepsilon\in[0,1]$. In analogy to (\ref{PtbedPDE}), we
perturb the coefficients of the optimal switching system
(\ref{switching}) and consider
\begin{equation}
\label{Ptbedswitching1} \left\{\begin{array}{ll}
\max\displaystyle\left\{-\partial_tv_i^{\varepsilon}+\inf_{(\tau,e)\in\Theta^{\varepsilon}}\mathcal{L}^{q_i}(t+\tau,x+e,\partial_xv_i^{\varepsilon},\partial_{xx}v_i^{\varepsilon}),
v_i^{\varepsilon}-\mathcal{M}^{k}_{i}v^{\varepsilon} \right\}=0
&\text{in}\ Q_{T+\varepsilon^2};\\
\displaystyle v_i^{\varepsilon}(T+\varepsilon^{2},x)=U(x)&\text{in}\
\mathbb{R}^n.
\end{array}\right.
\end{equation}
%\begin{equation}\label{Ptbedswitching1}
%\max\left\{-\partial_tv_i^{\varepsilon}+\inf_{(\tau,e)\in\Theta^{\varepsilon}}\mathcal{L}^{q_i}(t+\tau,x+e,\partial_xv_i^{\varepsilon},\partial_{xx}v_i^{\varepsilon}),
%v_i^{\varepsilon}-\mathcal{M}^{k}_{i}v^{\varepsilon} \right\}=0,
%\end{equation}
%with terminal condition
%\begin{equation}\label{Ptbedswitching1_terminal}
%v_i^{\varepsilon}(T+\varepsilon^{2},x)=U(x),
%\end{equation}
%where $\Theta^{\varepsilon}=[-\varepsilon^{2},0]\times \varepsilon
%B(0,1)$, and the coefficients $\sigma$, $b$ and $L$ are
%appropriately extended such that Assumption \ref{data assumption}
%still holds.
It then follows from Proposition 2.2 of \cite{BJ} that
(\ref{Ptbedswitching1}) admits a unique viscosity solution, say
$v^{\varepsilon}=(v_1^{\varepsilon},\dots,v_m^{\varepsilon})$, with
$|v^{\varepsilon}|_{1}\leq C$ and, moreover, for each
$i\in\mathcal{I}$,
\begin{equation}\label{optimal_switching_approx_1}
{|v^{\varepsilon}_i-v_i|}\leq C\varepsilon\ \ \text{in}\ \bar{Q}_T,
\end{equation} where the constant $C$ depends only on
$M$ and $T$.
%the existence and uniqueness of the (\ref{Ptbedswitching1}-\ref{Ptbedswitching1_terminal})
%\begin{lemma}\label{Ptbedswitchingproperty1}
%Under Assumption \ref{data assumption}, if $\mathcal{A}=\cup_{i=1}^{m}\mathcal{A}_{i}\subset\mathbb{R}^{n}$
%is a compact set, then there exists a unique solution $v^{\varepsilon}$ of (\ref{Ptbedswitching1}-\ref{Ptbedswitching1_terminal}) satisfying $|v^{\varepsilon}|_{1}\leq C$ and $|v^{\varepsilon}-v|_{0}\leq C\varepsilon$, where $v$ solves (\ref{switching})-(\ref{switching_terminal}) and the constant $C$ depends only on $T$, $M$ and $\mathcal{A}$.
%\end{lemma}
%Now set $\mathcal{A}_{i}=\{q_{i}\}$ for $i\in\mathcal{I}$ and hence $\cup_{i=1}^{m}\mathcal{A}_{i}=\mathcal{A}^{m}$ is compact. Note that under this setting, in the definition of $F_{i}^{k,\varepsilon}$ we can move out the ``sup'' operator and replace $\mathcal{L}^{q}$ by $\mathcal{L}^{q_{i}}$ since $q$ must be $q_{i}$. We then apply Theorem \ref{errorboundswitching} to get
%$$0\leq v_{i}-u^{m}\leq C(k^{\frac13}+k^{\frac23})\ \text{in} \ Q_{T}, \ \ \ i\in\mathcal{I},$$
%where $v$ is the solution of (\ref{switching})-(\ref{switching_terminal}) under our setting. Together with Lemma \ref{Ptbedswitchingproperty1} we have
In turn, inequalities (\ref{optimal_switching_approx}) and
(\ref{optimal_switching_approx_1}) imply that, for each
$i\in\mathcal{I}$,
\begin{equation}\label{Ptbederror}
{|v_{i}^{\varepsilon}- u^{m}|}\leq
{|v_{i}^{\varepsilon}-v_i|}+{|v_{i}-u^m|}\leq
C(\varepsilon+k^{\frac13}+k^{\frac23})\ \ \text{in}\ \bar{Q}_T.
\end{equation}

Next, we regularize $v_i^{\varepsilon}$ by introducing
$v_{i,\varepsilon}(t,x):=v_{i}^{\varepsilon}*\rho_{\varepsilon}(t,x)$,
for $(t,x)\in\bar{Q}_{T}$, where $\rho_{\varepsilon}$ is the
mollifer defined in (\ref{mollifer}). Then,
$v_{i,\varepsilon}\in\mathcal{C}_{b}^{\infty}(\bar{Q}_{T})$,
\begin{equation}\label{convoerror}
|v_{i,\varepsilon}-v_{i}^{\varepsilon}|_{0}\leq C\varepsilon,
\end{equation}
and, moreover, for positive integers $m$ and $n$,
\begin{equation}\label{convoproperty}
{|\partial_{t}^{m}\partial_{x}^{n}v_{i,\varepsilon}|_0\leq
C\varepsilon^{1-2m-|n|}}.
\end{equation}

We introduce the function
$w_{\varepsilon}:=\min_{i\in\mathcal{I}}v_{i,\varepsilon},$ which is
smooth in $\bar{Q}_T$ except for finitely many points. Then,
(\ref{Ptbederror}) and (\ref{convoerror}) yield
\begin{equation}\label{errtogether}
{|u^{m}-w_{\varepsilon}|}\leq
C(\varepsilon+k^{\frac13}+k^{\frac23})\ \ \text{in}\ \bar{Q}_T.
\end{equation}
For each $(t,x)\in\bar{Q}_{T}$, let
$j:=\arg\min_{i\in\mathcal{I}}v_{i,\varepsilon}(t,x)$. Then,
$w_{\varepsilon}(t,x)=v_{j,\varepsilon}(t,x)$ and, for such $j$, we
obtain that
$$v_{j,\varepsilon}(t,x)-\mathcal{M}_{j}^{k}v_{\varepsilon}(t,x)=\max_{i\neq j, i\in\mathcal{I}}\{v_{j,\varepsilon}(t,x)-v_{i,\varepsilon}(t,x)-k\}\leq
-k.$$ In turn, inequality (\ref{convoerror}) implies that
$$v_{j}^{\varepsilon}(t,x)-\mathcal{M}_{j}^{k}v^{\varepsilon}(t,x)\leq v_{j,\varepsilon}(t,x)-\mathcal{M}_{j}^{k}v_{\varepsilon}(t,x)+C\varepsilon\leq -k+C\varepsilon.$$
Furthermore, since $|v^{\varepsilon}|_1\leq C$ for
$v^{\varepsilon}=(v_1^{\varepsilon},\dots,v_m^{\varepsilon})$, we
also have
\begin{align*}
v_{j}^{\varepsilon}(t-\tau,x-e)-\mathcal{M}_{j}^{k}v^{\varepsilon}(t-\tau,x-e)&\leq
v_{j}^{\varepsilon}(t,x)-\mathcal{M}_{j}^{k}v^{\varepsilon}(t,x)+C(|\tau|^{\frac12}+|e|)\\
&\leq -k+C\varepsilon+2C\varepsilon,
\end{align*}
for any $(\tau,e)\in \Theta^{\varepsilon}$. If we then choose $k=
4C\varepsilon$, we obtain that, for any $(\tau,e)\in
\Theta^{\varepsilon}$,
$$v_{j}^{\varepsilon}(t-\tau,x-e)-\mathcal{M}_{j}^{k}v^{\varepsilon}(t-\tau,x-e)<0.$$
Therefore, the point $(t-\tau,x-e)$, for $(\tau,e)\in
\Theta^{\varepsilon}$, is in the continuation region of
(\ref{Ptbedswitching1}). Thus,
$$-\partial_{t}v_{j}^{\varepsilon}(t-\tau,x-e)+\inf_{(\tau,e)\in\Theta^{\varepsilon}}\mathcal{L}^{q_{j}}\left(t,x,\partial_{x}v_{j}^{\varepsilon}(t-\tau,x-e),\partial_{xx}v_{j}^{\varepsilon}(t-\tau,x-e)\right)=0,$$
and, in turn,
$$-\partial_{t}v_{j}^{\varepsilon}(t-\tau,x-e)+\mathcal{L}^{q_{j}}\left(t,x,\partial_{x}v_{j}^{\varepsilon}(t-\tau,x-e),\partial_{xx}v_{j}^{\varepsilon}(t-\tau,x-e)\right)\ge0.$$
Using the definition of $v_{j,\varepsilon}$ and that
$\mathcal{L}^{q_{j}}$ is linear in $\partial_{x}v_{j}^{\varepsilon}$
and $\partial_{xx}v_{j}^{\varepsilon}$, we further have
\begin{align}\label{supsolution}
&-\partial_{t}v_{j,\varepsilon}(t,x)+\mathcal{L}^{q_{j}}\left(t,x,\partial_{x}v_{j,\varepsilon}(t,x),\partial_{xx}v_{j,\varepsilon}(t,x)\right)\\
=&\ \int_{-\varepsilon^2< \tau< 0}\int_{|e|< \varepsilon}
\left(-\partial_{t}v_{j}^{\varepsilon}(t-\tau,x-e)+\mathcal{L}^{q_{j}}\left(t,x,\partial_{x}v_{j}^{\varepsilon}(t-\tau,x-e),\partial_{xx}v_{j}^{\varepsilon}(t-\tau,x-e)\right)\right)\notag\\
&\times\rho_{\varepsilon}(\tau,e)ded\tau \ge 0\notag.
\end{align}
Next, we observe that, for $(t,x)\in\bar{Q}_{T-\Delta}$, the
definition of $j$ implies that
$w_{\varepsilon}(t,x)=v_{j,\varepsilon}(t,x)$ and
$w_{\varepsilon}(t+\Delta,\cdot)\leq
v_{j,\varepsilon}(t+\Delta,\cdot).$ Then, applying Proposition
\ref{scheme property} (ii) (iv) and estimate (\ref{convoproperty}),
we obtain that, for any $(t,x)\in\bar{Q}_{T-\Delta}$,
\begin{align*}
 &\ S(\Delta,t,x,w_{\varepsilon}(t,x),w_{\varepsilon}(t+\Delta,\cdot))\\
 \ge &\
 S(\Delta,t,x,v_{j,\varepsilon}(t,x),v_{j,\varepsilon}(t+\Delta,\cdot))\\
 \ge &\
 -\partial_{t}v_{j,\varepsilon}(t,x)+\sup_{q\in\mathbb{R}^{n}}\mathcal{L}^{q}(t,x,\partial_{x}v_{j,\varepsilon}(t,x),\partial_{xx}v_{j,\varepsilon}(t,x))-C\Delta\varepsilon^{-3}\\
 \ge &\
  -\partial_{t}v_{j,\varepsilon}(t,x)+\mathcal{L}^{q_{j}}(t,x,\partial_{x}v_{j,\varepsilon}(t,x),\partial_{xx}v_{j,\varepsilon}(t,x))-C\Delta\varepsilon^{-3}
  \ge
  -C\Delta\varepsilon^{-3},
\end{align*}
for some constant $C$ depending only on $M$ and $T$, where we used
(\ref{supsolution}) in the last inequality. In turn, the comparison
result in Proposition \ref{schemecomparison} implies that
$$u^{\Delta}-w_{\varepsilon}\leq \sup_{\bar{Q}_{T}\backslash \bar{Q}_{T-\Delta}}(u^{\Delta}-w_{\varepsilon})^{+}+C(T-t)\Delta\varepsilon^{-3}\ \ \text{in}\ \bar{Q}_T.$$
Combining the above inequality with (\ref{errtogether}), we further
get
\begin{align*}
u^{\Delta}-u^{m} = &\ (u^{\Delta}-w_{\varepsilon})+(w_{\varepsilon}-u^{m})\\
 \leq &\
 \sup_{\bar{Q}_{T}\backslash
 \bar{Q}_{T-\Delta}}(u^{\Delta}-w_{\varepsilon})^{+}+C(T-t)\Delta\varepsilon^{-3}+
 C(\varepsilon+k^{\frac13}+k^{\frac23})\\
 \leq &\
 \sup_{\bar{Q}_{T}\backslash\bar{Q}_{T-\Delta}}(u^{\Delta}-u^{m})^{+}+C(\varepsilon+\varepsilon^{\frac13}+\varepsilon^{\frac{2}{3}}+\Delta\varepsilon^{-3})\\
 \leq &\
 \sup_{\bar{Q}_{T}\backslash\bar{Q}_{T-\Delta}}(u^{\Delta}-u^{m})^{+}+C\Delta^{\frac{1}{10}}\
 \ \text{in}\ \bar{Q}_T,
\end{align*}
where we {used} $k=4C\varepsilon$ in the second to last inequality,
and {chose} $\varepsilon={\Delta}^{\frac{3}{10}}$ in the last
inequality.
\end{proof}\\

We are now ready to establish a lower bound for the approximation
{error}.
\begin{theorem}\label{theorem_error_2}
Suppose that Assumption \ref{data assumption} holds. Let
$u^{\Delta}\in\mathcal{C}_b(\bar{Q}_{T})$ satisfy the approximation
scheme (\ref{semischeme}) and $u\in\mathcal{C}_b^1(\bar{Q}_{T})$ be
the unique viscosity solution of equation (\ref{PDE_1}). Then, there
exists a constant $C$, depending only on $M$ and $T$, such that
$$u-u^{\Delta}\ge -C\Delta^{\frac{1}{10}}\ \ \text{in}\ \bar{Q}_T.$$
\end{theorem}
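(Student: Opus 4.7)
The plan is to package Proposition \ref{lowbd1} together with the uniform convergence $u^{m}\to u$ from (\ref{lemma1}) and the boundary strip estimate from Lemma \ref{errorsmall}. The bulk of the work, namely constructing smooth local supersolutions via the switching system and mollification, has already been done in Proposition \ref{lowbd1}; what remains is to pass to the limit in $m$ and absorb the remainder on $\bar{Q}_{T}\setminus\bar{Q}_{T-\Delta}$.

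First, I would fix the subsequence $\{u^{m}\}$ from (\ref{lemma1}) along which $u^{m}\to u$ uniformly on $\bar{Q}_{T}$. Observe that the constant $C$ appearing in Proposition \ref{lowbd1} depends only on $M$ and $T$ and, in particular, is independent of $m$, so we may take $m\to\infty$ without deteriorating the rate $\Delta^{1/10}$.

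Next, I would estimate the boundary/terminal strip $\bar{Q}_{T}\setminus\bar{Q}_{T-\Delta}$. For any $(t,x)$ in this strip, the triangle inequality and Lemma \ref{errorsmall} give
\[
(u^{\Delta}-u^{m})^{+}\le |u^{\Delta}-u|+|u-u^{m}|_{0}\le C\sqrt{\Delta}+|u-u^{m}|_{0}.
\]
Substituting into Proposition \ref{lowbd1}, I obtain, on all of $\bar{Q}_{T}$,
\[
u^{\Delta}-u^{m}\le C\sqrt{\Delta}+|u-u^{m}|_{0}+C\Delta^{\frac{1}{10}}.
\]
Since $u^{\Delta}-u=(u^{\Delta}-u^{m})+(u^{m}-u)\le (u^{\Delta}-u^{m})+|u-u^{m}|_{0}$, it follows that
\[
u^{\Delta}-u\le C\sqrt{\Delta}+2|u-u^{m}|_{0}+C\Delta^{\frac{1}{10}}\ \ \text{in}\ \bar{Q}_{T}.
\]

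Finally, letting $m\to\infty$ along the chosen subsequence causes $|u-u^{m}|_{0}\to 0$, and for $\Delta\in(0,1]$ we have $\sqrt{\Delta}\le \Delta^{1/10}$, so the right-hand side is bounded by $C\Delta^{1/10}$ with $C$ depending only on $M$ and $T$. Rearranging yields the desired lower bound $u-u^{\Delta}\ge -C\Delta^{1/10}$ on $\bar{Q}_{T}$. I do not expect any serious obstacle here; the only point worth flagging is the $m$-independence of the constant in Proposition \ref{lowbd1}, which is what legitimizes the passage to the limit in the auxiliary HJB approximation (\ref{finiteHJB}).
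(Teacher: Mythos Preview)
Your proof is correct and follows essentially the same route as the paper: apply Proposition \ref{lowbd1}, control the boundary strip via Lemma \ref{errorsmall}, and pass to the limit $m\to\infty$ using the uniform convergence (\ref{lemma1}), noting that the constant in Proposition \ref{lowbd1} is independent of $m$. The paper's chain of inequalities is organized slightly differently (it splits $\sup(u^{\Delta}-u^{m})^{+}\le \sup(u^{\Delta}-u)^{+}+\sup(u-u^{m})^{+}$ on the strip before invoking (\ref{estimate_final_interval})), but the content is the same.
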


\begin{proof}
Proposition \ref{lowbd1} yields
\begin{align*}
u^{\Delta}-u = &\ (u^{\Delta}-u^{m})+(u^{m}-u)\\
 \leq &\
 \sup_{\bar{Q}_{T}\backslash\bar{Q}_{T-\Delta}}(u^{\Delta}-u^{m})^{+}+C\Delta^{\frac{1}{10}}+(u^m-u)\\
 \leq &\
 \sup_{\bar{Q}_{T}\backslash
 \bar{Q}_{T-\Delta}}(u^{\Delta}-u)^{+}+C\Delta^{\frac{1}{10}}+
 \sup_{\bar{Q}_{T}\backslash \bar{Q}_{T-\Delta}}(u-u^{m})^{+}+(u^{m}-u)\\
 \leq &\
 C\Delta^{\frac{1}{10}}+\sup_{\bar{Q}_{T}\backslash
 \bar{Q}_{T-\Delta}}(u-u^{m})^{+}+(u^{m}-u),
\end{align*}
where we used estimate (\ref{estimate_final_interval}) in the last
inequality. Sending $m\to\infty$ and using (\ref{lemma1}), we
conclude.
\end{proof}

%%%%%%%%%%%%%%%%%%%%%%%%%%%%%%%%%%%%%55
\section{A numerical example}
We present a numerical result, applying the approximation scheme
(\ref{semischeme}) for the case
$$\sigma(t,x)=1,\ b(t,x)=0,\ H(t,x,p)=p^2/2,\ T=1.$$
We also choose $U(x)=0\vee x\wedge K$ in the semilinear PDE
(\ref{PDE_1}). Then the equation in (\ref{PDE_1}) becomes the
Cole-Hopf equation (see \cite{Evans}):
\begin{equation}\label{Eg1}
-\partial_tu(t,x)-\frac{1}{2}\partial_{xx}u(t,x)+\frac12(\partial_xu(t,x))^2=0.
\end{equation}
% We solve on a space interval $\Omega=[0,10]$, with boundary conditions
%\begin{equation}\label{Eg1bc}
%u(t,0)=0; \ \ \ u(t,10)=K, \ \ \ t\in[0,T].
%\end{equation}
It is well known that, by the Cole-Hopf transformation (see
\cite{Evans} and \cite{Zari}), the function $v(t,x):=e^{-u(t,x)}$
satisfies the heat equation
$$\partial_tv(t,x)+\frac{1}{2}\partial_{xx}v(t,x)=0,$$
with $v(T,x)=e^{-U(x)}=e^{-0\vee x\wedge K}.$ In turn,
\begin{align*}
v(t,x)=&\ \Phi(-\frac{x}{\sqrt{T-t}})+e^{-x+(T-t)/2}\left(\Phi(\frac{K-x+T-t}{\sqrt{T-t}})-\Phi(\frac{-x+T-t}{\sqrt{T-t}})\right)\\
&\ +e^{-K}\Phi(-\frac{K-x}{\sqrt{T-t}}),
\end{align*}
where $\Phi$ is the standard normal cumulative distribution function
and, thus, we obtain the explicit solution $u(t,x)=-\log v(t,x).$

We use this exact solution as a benchmark, and compare it with the
approximate solution obtained by the approximation scheme
(\ref{semischeme}). Moreover, we also compare our results with the
ones obtained via the standard Howard's finite difference (FD)
algorithm (see, for example, \cite{BMZ} for a detailed discussion of
Howard's FD scheme).

Since Howard's scheme is based on the finite difference method, for
the comparison purpose, we also numerically compute the conditional
expectation appearing in the backward operator
$\mathbf{S}_t(\Delta)$ (cf. (\ref{semigroupequation1})) using the
finite difference method. However, we emphasize that, different from
Howard's scheme, the splitting approximation itself does not depend
on the finite difference method {(as long as one can find an
efficient way to compute conditional expectations, e.g. the multi-level Monte Carlo approach \cite{MLMC_method}, the least squares Monte Carlo approach \cite{LS_method}, the cubature approach \cite{cubature_method}, and etc).} Hence, our
approximation scheme can be potentially used to numerically solve
high dimensional PDEs without the ``curse of dimensionality" issue.

To numerically compute the finite-dimensional minimization problem
in the backward operator $\mathbf{S}_t(\Delta)$ (cf.
(\ref{semigroupequation1})), since the finite difference method already provides us with all the points to be compared, we use the simple brute force method to find the minimizers and minimal values\footnote{In general, we may implement the standard Nelder-Mead simplex
algorithm (see \cite{NM}), which is commonly used in the literature when the derivatives of the objective function in the minimization problem are not known.}.

Figures 1 and 2 demonstrate the performance of the approximation
scheme (\ref{semischeme}) with the parameter $K=5$. They illustrate
how the approximate solutions converge as we increase the number of
time steps $T/\Delta$. For our parameter values, $\Delta=0.1$ (so
$T/\Delta=10$) is sufficient for the approximate solutions to
converge, as the relative error is already negligible ($0.056\%$).

{Figure 3 compares the values numerically computed by the
approximation scheme (\ref{semischeme}) and the Howard's FD scheme
with different time steps. It shows that the approximation scheme
gives a better approximation than the Howard's scheme does. In
particular, we observe that when the time step $\Delta=0.1$ (so $T/\Delta=10$),
the numerical solution computed by our approximation scheme is far more accurate than
the one computed by the FD scheme. The relative error is $0.056\%$ for the former and $0.142\%$ for the latter.
It also shows
that the approximation scheme converges linearly with time step
$\Delta$, and this is consistent with our theoretical results in
Theorem \ref{smoothcase}. Table 1 further compares the computation errors and costs between the
approximation scheme (\ref{semischeme}) and the Howard's FD scheme. Since there involves an additional minimization step
in the approximation scheme (\ref{semischeme}), its computation costs are higher than the FD scheme. However, we observe that when the time step is small (e.g. $\Delta=0.1$),
the computation times for both schemes are extremely fast (less than $0.05$ second). }

%I changed the function 'fminsearch' to 'fminbnd' which is more
%efficient in our context. This result is in the tab 'New' in the
%spreadsheet. You can see the speed doubled immediately. Then I tried
%to compute the min by myself so that we can probably avoid using
%'fmin' functions at all and increase the speed further more. The
%idea is that instead of using 'fmin' functions directly, we can
%further discretise the x-axis intervals into r subintervals and
%compute the values on all of those 'subpoints' and take min through
%them all. Obviously, the greater r is, the more precise the minimum
%can be, and the longer time it will take. So I tried r = 10, 15 and
%20 and the results are below the yellow tabs in the spreadsheet.
%Surprisingly, the speed is way more faster without compromising the
%precision too much.

%%%%%%%%%%%%%%%%%%%%%%%%%%%%%%%%%%%%%%%%%%%%%%%%%%%%%%%%%%%%%%%%5
\section{Conclusions}

We proposed an approximation scheme for a class of semilinear
parabolic equations whose Hamiltonian is convex and coercive to the
gradients. The scheme is based on splitting the equation in two
parts, the first corresponding to a linear parabolic equation and
the second to a Hamilton-Jacobi equation. The solutions of these
equations are approximated using, respectively, the Feynman-Kac and
the Hopf-Lax formulae. We established the convergence of the
approximation scheme and determined the convergence rate, combining
Krylov's shaking coefficients technique and Barles-Jakobsen's
optimal switching approximation. One of the key steps is the
derivation of a consistency error via convex duality arguments,
using the convexity of the Hamiltonian in an essential way.

The approach and the results herein may be extended in various
directions. Firstly, one may consider problem (\ref{PDE_1}) in a
bounded domain, an undoubtedly important case since various
applications are cast in such domains (e.g. utilities defined in
half-space, constrained risk measures, etc.) However, various
non-trivial technical difficulties arise. Some recent works on such
problems using other approaches can be found in \cite{CS},
\cite{Krylov2} and \cite{Reisinger}.

Secondly, one may consider variational versions of problem
(\ref{PDE_1}). These are naturally related to optimal stopping and
to singular stochastic optimization problems, both directly related
to various applications with early-exercise, fixed and/or
proportional transaction costs, irreversible investment decisions,
etc. Recent results in this direction that use some of the ideas
developed herein can be found in \cite{Huang}.

%allow for constraints on both the solution and its gradients, which
%correspond to, respectively, optimal stopping and singular control
%optimization. See \cite{Huang} for such an extension. Another
%important extension is to allow for bounded domains. This is
%undoubtedly a very important extension, for in many applications the
%states of the equation are constrained in a bounded domain. However,
%it is far more challenging to prove the convergence rate of the
%approximate solutions in bounded domains (see \cite{CS},
%\cite{Krylov2} and \cite{Reisinger} for some recent developments in
%this direction). We leave such an extension for our future research.

%\section{Weak approximation of the degenerate SDE}
%
%To be done!

%%%%%%%%%%%%%%%%%%%%%%%%%%%%%%%%%%%%%%%%%%%%%%%%%%%%%%%%%%%%%%%55
\begin{appendix}

\section{Proofs of Propositions \ref{solutionproperty} and \ref{upperbound_1}}\label{App1}

We note that equation (\ref{PDE_1}) is a special case (choosing
$\varepsilon=0$) of the HJB equation (\ref{PtbedPDE}). Therefore, we
omit the proof of Proposition \ref{solutionproperty} and only prove
Proposition \ref{upperbound_1}.

We first show that there exists a bounded solution to
(\ref{PtbedPDE}). To this end, using the convex dual function
$L^{\theta}(t,x,q):=\sup_{p\in\mathbb{R}^n}(p\cdot
q-H^{\theta}(t,x,p))$, we rewrite (\ref{PtbedPDE}) as
\begin{equation}\label{HJB_equation}
\left\{\begin{array}{ll}
\displaystyle-\partial_{t}u^{\varepsilon}+{\sup_{\theta\in\Theta^{\varepsilon},q\in\mathbb{R}^{n}}}\mathcal{L}^{\theta,q}\left(t,x,\partial_{x}u^{\varepsilon},\partial_{xx}u^{\varepsilon}\right)=0&\text{in}\  Q_{T+\varepsilon^2};\\
\displaystyle u^{\varepsilon}(T+\varepsilon^2,x)=U(x)&\text{in}\
\mathbb{R}^n,
\end{array}\right.
\end{equation}
where
\begin{equation*}
\mathcal{L}^{\theta,q}(t,x,p,X)=-\frac{1}{2}\text{Trace}\left(\sigma^{\theta}\sigma^{\theta^T}(t,x)X\right)-(b^{\theta}(t,x)-q)\cdot
p-L^{\theta}(t,x,q).
\end{equation*}
We also introduce the stochastic control problem
$$u^{\varepsilon}(t,x)={\inf_{\theta\in\Theta^{\varepsilon}[t,T+\varepsilon^2],q\in\mathbb{H}^{2}[t,T+\varepsilon^{2}]}}\mathbf{E}\left[\int_{t}^{T+\varepsilon^{2}}L^{{\theta_{s}}}\left(s,X_{s}^{t,x;\theta,q},q_{s}\right)ds+U\left(X_{T+\varepsilon^{2}}^{t,x;\theta,q}\right)|\mathcal{F}_t\right],$$
with the controlled state equation
$$dX_{s}^{t,x;\theta,q}=\left(b^{{\theta_{s}}}(s,X_{s}^{t,x;\theta,q})-q_{s}\right){ds}+\sigma^{{\theta_{s}}}\left(s,X_{s}^{t,x;\theta,q}\right)dW_{s},$$
where $\Theta^{\varepsilon}[t,T+\varepsilon^2]$ is the space of
$\Theta^{\varepsilon}$-valued progressively measurable processes
$(\tau_s,e_s)$ and $\mathbb{H}^{2}[t,T+\varepsilon^2]$ is the space
of square-integrable progressively measurable processes $q_s$, for
$s\in[t,T+\varepsilon^2]$. Next, we identify its value function with
a bounded viscosity solution to (\ref{HJB_equation}). For this, we
only need to establish upper and lower bounds for the value function
$u^{\varepsilon}(t,x)$ and, in turn, use standard arguments as in
\cite{Pham} and \cite{Touzi}.

To find an upper {bound} for $u^{\varepsilon}$,  we choose an
arbitrary perturbation parameter process
$\theta\in\Theta^{\varepsilon}[t,T+\varepsilon]$ and choose
$\hat{q}$ with $\hat{q}_{s}\equiv 0$. Then, Proposition
\ref{transformlemma} (ii) yields
\begin{align*}
u^{\varepsilon}(t,x)\le &\ \mathbf{E}\left[\int_{t}^{T+\varepsilon^{2}}L^{{\theta_{s}}}(s,X_{s}^{t,x;\theta,\hat{q}},0)ds+U(X_{T+\varepsilon^{2}}^{t,x;\theta,\hat{q}})|\mathcal{F}_t\right] \\
 \le &\
 (T+\varepsilon^{2}-t)|L^{*}(0)|+M \le (T+1)|L^{*}(0)|+M.
\end{align*}

For the lower bound, we use again Proposition \ref{transformlemma}
(ii) to obtain that $L_{*}(q)\ge -H^{*}(0) \ge -|H^{*}(0)|$, for any
$q\in\mathbb{R}^{n}$. In turn, for any
$(\theta,q)\in\Theta^{\varepsilon}[t,T+\varepsilon^2]\times\mathbb{H}^2[t,T+\varepsilon^2]$,
\begin{align*}
 &\ \mathbf{E}\left[\int_{t}^{T+\varepsilon^{2}}L^{{\theta_{s}}}(s,X_{s}^{t,x;\theta,q},q_{s})ds+U(X_{T+\varepsilon^{2}}^{t,x;\theta,q})|\mathcal{F}_t\right] \\
 \ge &\
 \mathbf{E}\left[\int_{t}^{T+\varepsilon^{2}}L_{*}(q_{s})ds+U(X_{T+\varepsilon^{2}}^{t,x;\theta,q})|\mathcal{F}_t\right] \\
 \ge &\
 -(T+\varepsilon^{2}-t)|H^{*}(0)|-M \ge -(T+1)|H^{*}(0)|-M,
\end{align*}
and, thus, $u^{\varepsilon}(t,x)\ge -(T+1)|H^{*}(0)|-M$ and
$|u^{\varepsilon}|_{0}\le C$, for some constant $C$ independent of
$\varepsilon$.

The uniqueness of the viscosity solution is a direct consequence of
the continuous dependence result, presented next. Its proof follows
along similar arguments as in Theorem A.1 of \cite{Jakobsen} and is
thus omitted.

\begin{lemma}\label{Ptbedproperty}
For any $s\in(0,{T+\varepsilon^{2}}]$, let $u\in USC(\bar{Q}_{s})$
be a bounded from above viscosity subsolution of (\ref{PtbedPDE})
with coefficients $\sigma^\theta,b^\theta$ and $H^{\theta}$, and
$\bar{u}\in LSC(\bar{Q}_{s})$ be a bounded from below viscosity
supersolution of {(\ref{PtbedPDE})} with coefficients
$\bar{\sigma}^\theta,\bar{b}^\theta$ and $\bar{H}^{\theta}$. Suppose
that Assumption \ref{data assumption} holds for both sets of
coefficients with respective constants $M$ and $\bar{M}$, uniformly
in $\theta\in\Theta^{\varepsilon}$, and that either
$u(s,\cdot)\in\mathcal{C}_b^1(\mathbb{R}^n)$ or
$\bar{u}(s,\cdot)\in\mathcal{C}_b^1(\mathbb{R}^n)$. Then, there
exists a constant $C$, depending only on $M$, $\bar{M}$,
$[u(s,\cdot)]_1$ or $[\bar{u}(s,\cdot)]_1$, and $s$, such that, for
$(t,x)\in\bar{Q}_s$,
\begin{equation}\label{ctdependence}
u-\bar{u}\le
C\left(|(u(s,\cdot)-\bar{u}(s,\cdot))^{+}|_{0}+\sup_{\theta\in\Theta^{\varepsilon}}\left\{|\sigma^\theta-\bar{\sigma}^\theta|_{0}+|b^\theta-\bar{b}^\theta|_{0}\right\}+\sup_{\theta\in\Theta^{\varepsilon}}|H^\theta-\bar{H}^\theta|_{0}\right).
\end{equation}
\end{lemma}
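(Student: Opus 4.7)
The plan is to prove the continuous dependence estimate by the classical doubling of variables method, adapted to the setting where the Hamiltonian is convex and coercive (rather than Lipschitz) in the gradient. The regularity hypothesis on one of $u(s,\cdot), \bar{u}(s,\cdot)$ will be used both to handle the terminal boundary contribution in the penalization and, more importantly, to supply an a priori gradient bound that tames the convex coercive Hamiltonian.

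Using the penalization
$$\Psi(t,x,y) = u(t,x) - \bar{u}(t,y) - \frac{|x-y|^2}{2\eta} - \alpha(|x|^2+|y|^2),$$
for small $\eta, \alpha > 0$, the boundedness of $u, \bar{u}$ together with the quadratic penalty forces a maximizer $(\hat t, \hat x, \hat y) \in \bar{Q}_s \times (\mathbb{R}^n)^2$ satisfying $\alpha(|\hat x|^2 + |\hat y|^2) \le C$ and $|\hat x - \hat y|^2/\eta \le C$. If $\hat t = s$, the terminal regularity gives directly
$$u(s,\hat x) - \bar{u}(s,\hat y) \le |(u(s,\cdot) - \bar{u}(s,\cdot))^+|_0 + \min\{[u(s,\cdot)]_1, [\bar{u}(s,\cdot)]_1\}\,|\hat x - \hat y|,$$
with the second term vanishing as $\eta \to 0$. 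If $\hat t < s$, Ishii's parabolic maximum principle produces parabolic super- and subjets with gradients $p_u = (\hat x - \hat y)/\eta + 2\alpha \hat x$, $p_{\bar{u}} = (\hat x - \hat y)/\eta - 2\alpha \hat y$, equal time-derivatives, and Hessians $X, Y$ satisfying the standard upper bound of order $\eta^{-1} + \alpha$.

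Subtracting the viscosity sub- and supersolution inequalities gives, at the contact point, $0 \le \sup_{\theta \in \Theta^\varepsilon} [\bar{F}^\theta - F^\theta]$, where $F^\theta$ and $\bar{F}^\theta$ denote the parabolic operators associated with $(\sigma^\theta, b^\theta, H^\theta)$ and $(\bar{\sigma}^\theta, \bar{b}^\theta, \bar{H}^\theta)$ respectively. I would then estimate this difference term by term: Ishii's matrix inequality bounds the trace contribution by $C\eta^{-1}(|\sigma^\theta - \bar{\sigma}^\theta|_0^2 + |\hat x - \hat y|^2) + C\alpha$; the drift contribution by $C(|b^\theta - \bar{b}^\theta|_0 + M|\hat x - \hat y|)|p_u| + C\alpha$; and the Hamiltonian contribution $\bar{H}^\theta(\hat t, \hat y, p_{\bar{u}}) - H^\theta(\hat t, \hat x, p_u)$ telescopes into a $\sup_\theta |H^\theta - \bar{H}^\theta|_0$ term, an $M|\hat x - \hat y|$ spatial-Lipschitz term from Assumption \ref{data assumption}, and a residual $H^\theta(\hat t, \hat x, p_u) - H^\theta(\hat t, \hat x, p_{\bar{u}})$ of size $2\alpha(|\hat x| + |\hat y|)$ multiplied by the local Lipschitz constant of $H^\theta$ in $p$ near $p_u$.

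The main obstacle is that $(\hat x - \hat y)/\eta$ is not automatically bounded as $\eta \to 0$, while the local Lipschitz constant of the convex coercive $H^\theta$ in $p$ grows in $|p|$, so both the $p$-residual above and the drift term $|p_u|$ factor could blow up. The $\mathcal{C}_b^1$ hypothesis is precisely what saves the argument: before the doubling of variables step, I would propagate the Lipschitz regularity from the terminal face to $\bar{Q}_s$ by a standard barrier argument, comparing $u(t,x)$ with the ansatz $u(t,x_0) + K|x - x_0| + K'(s-t)$, where $K = [u(s,\cdot)]_1$ and $K' = K'(M, [u(s,\cdot)]_1, s)$ is chosen from the Lipschitz bounds of $\sigma^\theta, b^\theta$ and the local Lipschitz estimate of $H^\theta$ in $p$ on $|p| \le K$. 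This yields $[u]_{2,1} \le C$ on $\bar{Q}_s$ (symmetrically for $\bar{u}$), supplying the uniform bound $|p_u|, |p_{\bar{u}}| \le C$. With all penalization-dependent quantities now under control, sending $\eta \to 0$ first and then $\alpha \to 0$ collapses the auxiliary terms and yields (\ref{ctdependence}).
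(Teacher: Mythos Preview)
The paper omits the proof, citing Theorem A.1 of \cite{Jakobsen}. Your outline via doubling of variables and the parabolic Ishii lemma is the standard route, and you correctly isolate the extra difficulty specific to this paper's setting: with a convex coercive (rather than Lipschitz-in-$p$) Hamiltonian, controlling $(\hat x-\hat y)/\eta$ is essential and not automatic.

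The gap is in your proposed remedy. You assert that a barrier comparison with $u(t,x_0)+K|x-x_0|+K'(s-t)$ propagates the terminal Lipschitz bound to $[u]_{2,1}\le C$ on all of $\bar Q_s$. Even after correcting the ansatz to $u(s,x_0)+K|x-x_0|+K'(s-t)$ (a genuine strict smooth supersolution, so the comparison with the USC subsolution $u$ is non-circular), one obtains only the \emph{one-sided} estimate $u(t,x)\le u(s,x_0)+K|x-x_0|+K'(s-t)$. To deduce $u(t,x)-u(t,y)\le L|x-y|$ from this you would also need a lower bound of the form $u(t,y)\ge u(s,y)-K''(s-t)$, and no such bound is available for an arbitrary USC subsolution that is merely bounded from above. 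In general, viscosity sub/supersolutions do not inherit spatial Lipschitz regularity from their terminal trace, so the preliminary step you propose cannot be carried out at this level of generality. The usual fix is to secure the gradient bound \emph{inside} the doubling argument rather than as a separate regularity step: for instance, replace the quadratic penalty by $Ke^{\gamma(s-t)}\sqrt{|x-y|^2+\epsilon^2}$ with $K$ slightly larger than the terminal Lipschitz constant. Its spatial gradient is then bounded by $Ke^{\gamma s}$ uniformly in $\epsilon$, so $|p_u|,|p_{\bar u}|$ are bounded a priori; the terminal case $\hat t=s$ is handled because $K|\hat x-\hat y|$ dominates the Lipschitz oscillation of whichever terminal datum lies in $\mathcal{C}_b^1$; and the exponential weight absorbs the $x$-Lipschitz contributions of the coefficients in the interior case. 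With the gradient bounded in this way, the remainder of your term-by-term estimates goes through unchanged.
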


The $x$-regularity of $u^{\varepsilon}$ follows easily from
(\ref{ctdependence}) by choosing $u=u^{\varepsilon}$,
$\bar{u}=u^{\varepsilon}(\cdot,\cdot+e)$ {and
$s=T+\varepsilon^{2}$}.

To get the time regularity, we work as follows. Firstly, let
$\rho(x)$ be a $\mathbb{R}_+$-valued smooth function with compact
support $B(0,1)$ and mass $1$, and introduce the sequence of
mollifiers
$\rho_{\varepsilon}(x):=\frac{1}{\varepsilon^{n}}\rho\left(\frac{x}{\varepsilon}\right).$
For $0\leq t < s \leq T+\varepsilon^{2}$, let $u_{\varepsilon'}$ be
the unique bounded solution of (\ref{PtbedPDE}) in $Q_{s}$ with
terminal condition
$u_{\varepsilon'}(s,x)=u^{\varepsilon}(s,\cdot)*\rho_{\varepsilon'}(x)$,
for some $\varepsilon'>0$. It then follows from (\ref{ctdependence})
that, for $(t,x)\in\bar{Q}_{s}$,
$$u^{\varepsilon}-u_{\varepsilon'}\leq C|(u^{\varepsilon}(s,\cdot)-u_{\varepsilon'}(s,\cdot))^{+}|_{0}\leq C[u^{\varepsilon}(s,\cdot)]_{1}\varepsilon'\le C\varepsilon'.$$
Similarly, we also have $u_{\varepsilon'}-u^{\varepsilon}\leq
C\varepsilon'$.

On the other hand, standard properties of mollifiers imply that
$|\partial_x^{j}u_{\varepsilon'}(s,\cdot)|_0\leq
C\varepsilon'^{1-j}$. Next, define the functions
$w^{+}_{\varepsilon'}(t,x):=u_{\varepsilon'}(s,x)+(s-t)C_{\varepsilon'}$
and
$w^{-}_{\varepsilon'}(t,x):=u_{\varepsilon'}(s,x)-(s-t)C_{\varepsilon'}$,
where $C_{\varepsilon'}=(\frac{1}{\varepsilon'}+1)C$, for some
constant $C$ independent of $\varepsilon$. We easily deduce that
they are, respectively, bounded supersolution  and subsolution of
(\ref{PtbedPDE}) in $Q_{s}$, with the same terminal condition
$w^{+}_{\varepsilon'}(s,x)=w^{-}_{\varepsilon'}(s,x)=u_{\varepsilon'}(s,x)$.
Thus, by (\ref{ctdependence}), we have
$w^{-}_{\varepsilon'}(t,x)\leq u_{\varepsilon'}(t,x) \leq
w^{+}_{\varepsilon'}(t,x),$ for $(t,x)\in\bar{Q}_{s}$, which in turn
implies that $|u_{\varepsilon'}(t,x)-u_{\varepsilon'}(s,x)|\leq
C_{\varepsilon'}|s-t|.$ Choosing $\varepsilon'=\sqrt{|s-t|}$, we
then obtain that
\begin{align*}
|u^{\varepsilon}(t,x)-u^{\varepsilon}(s,x)|\leq &\ |u^{\varepsilon}(t,x)-u_{\varepsilon'}(t,x)|+|u_{\varepsilon'}(t,x)-u_{\varepsilon'}(s,x)|+|u_{\varepsilon'}(s,x)-u^{\varepsilon}(s,x)|\\
 \leq &\
 2C\varepsilon'+C_{\varepsilon'}|s-t|
 \leq
 C(\varepsilon'+\frac{|s-t|}{\varepsilon'}+|s-t|)\leq
 C\sqrt{|s-t|},
\end{align*}
which, together with {the boundedness and} the $x$-regularity of
$u^{\varepsilon}$, implies that $|u^{\varepsilon}|_{1}\le C$.

Finally, note that $u(t,x)$ is also the bounded viscosity solution
of (\ref{PtbedPDE}) when $\sigma^\theta\equiv\sigma$,
$b^\theta\equiv b$ and $H^{\theta}\equiv H$. Applying
(\ref{ctdependence}) once more and using the regularity of $\sigma$,
$b$, $H$ and $u^\varepsilon$, we deduce that
\begin{align*}
u^\varepsilon-u \le &\ C\left(|(u^\varepsilon(T,\cdot)-u(T,\cdot))^{+}|_0+\sup_{\theta\in\Theta^{\varepsilon}}\left\{|\sigma^\theta-\sigma|_0+|b^\theta-b|_0\right\}+\sup_{\theta\in\Theta^{\varepsilon}}|H^{\theta}-H|_{0}\right)\\
 \le &\
  C\left(|u^\varepsilon(T,\cdot)-u^\varepsilon(T+\varepsilon^2,\cdot)|_0+\varepsilon\right)\le
  C\varepsilon\ \ \text{in}\ \bar{Q}_{T}.
\end{align*}
 Similarly, we also have $u-u^\varepsilon\leq
C\varepsilon$, and we easily conclude.

\end{appendix}

%%%%%%%%%%%%%%%%%%%%%%%%%%%%%%%%%%%%%%%%%%%%%%%%%%%55
\small

%%%%%%%%%%%%%%%%%%%%%%%%%%%%%%%%%%%%%%%%%%%%%%%%%%%%%%%%%%%
\newpage
\begin{figure}[h!]
\centering
\includegraphics[width=0.8\textwidth]{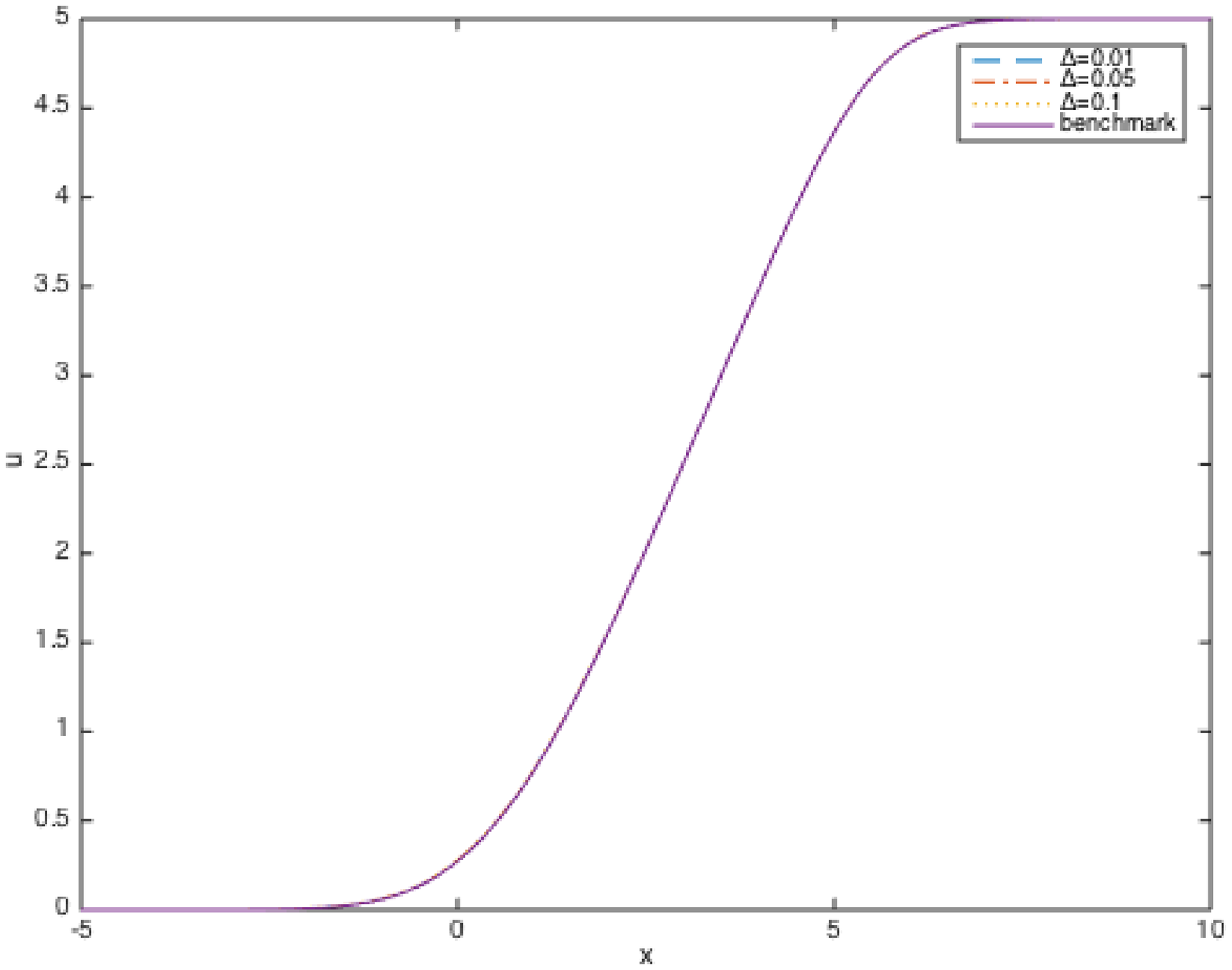}
\caption{Approximate values of $u(0,x)$ with various time steps
$\Delta=0.01/0.05/0.1$.}
%\end{figure}
%\begin{figure}[h]
%\centering
\includegraphics[width=0.8\textwidth]{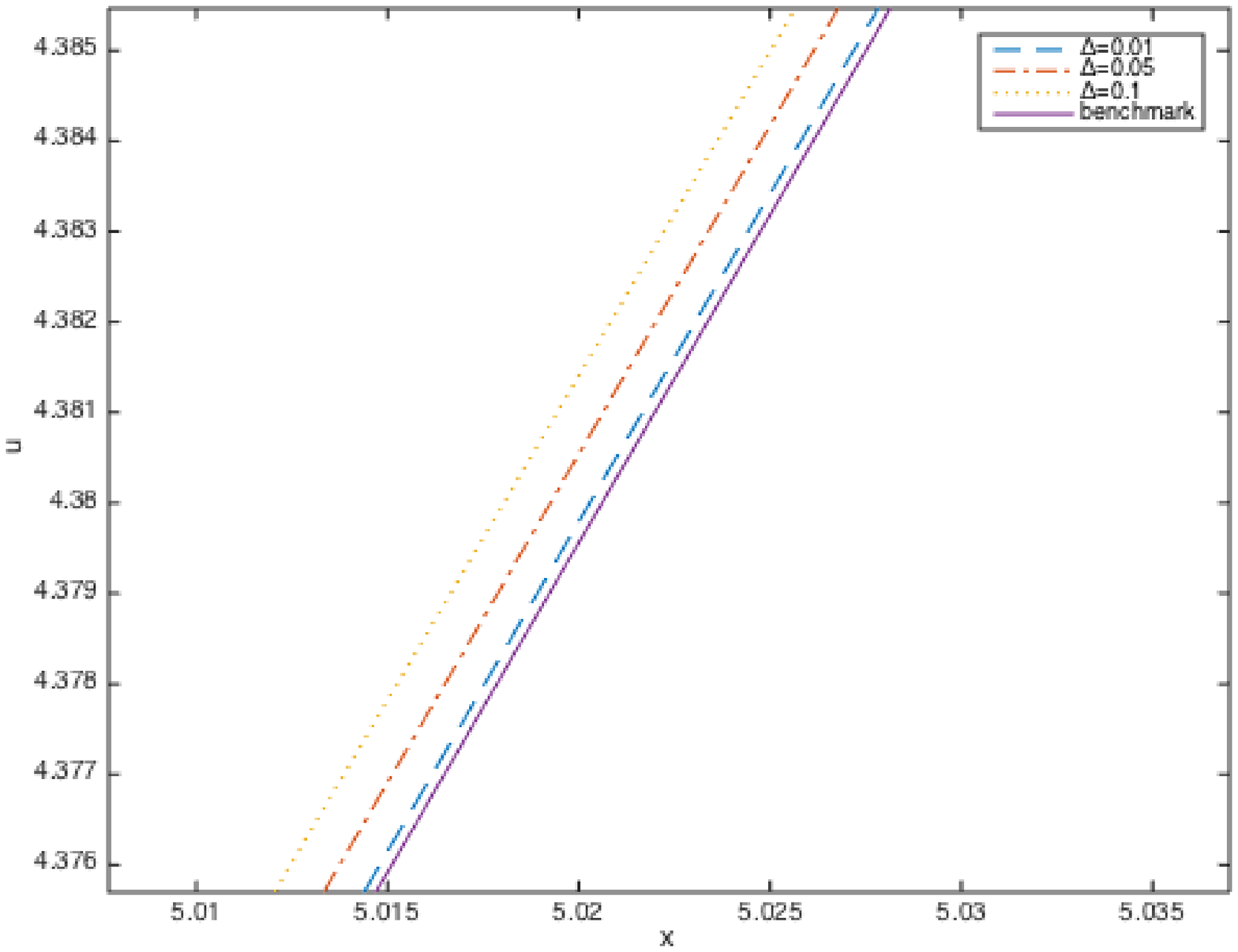}
\caption{Approximate values of $u(0,x)$ with various time steps
$\Delta=0.01/0.05/0.1$. The figure zooms in Fig. 1.1.}
\end{figure}

\newpage
\begin{figure}[h!]
\centering
\includegraphics[width=0.8\textwidth]{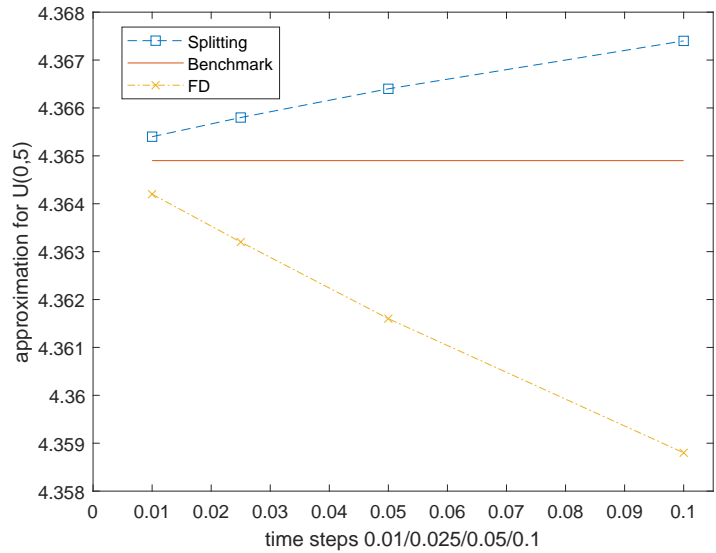}
\caption{Comparison of exact value and approximate values for
$u(0,5)$ via the approximation scheme (\ref{semischeme}) and the
Howard's FD scheme with various time steps
$\Delta=0.01/0.025/0.05/0.1$.}
\end{figure}

\begin{table}[h]
\centering
 \begin{tabular}{|l|*{5}{c|}}
 \hline  \backslashbox{numerical schemes}{time steps} & $0.01$ & $0.25$ & $0.05$& $0.1$\\
 \hline splitting approx. value & 4.3655 & 4.3658 & 4.3664 & 4.3674\\
 \hline \ \ \ \ \ \ \ \ \ \ \ \ approx. error & 0.012\% & 0.02\% & 0.032\% & 0.056\%  \\
 \hline \ \ \ \ \ \ \ \ \ \ \ \ running time (in seconds) & 18.78 & 1.07 & 0.16 & 0.04  \\
 \hline FD approx. value & 4.3642 & 4.3632 & 4.3616 & 4.3588 \\
 \hline \ \ \ \ \ \ approx. error & 0.016\% & 0.039\% & 0.076\% & 0.142\%  \\
 \hline \ \ \ \ \ \ running time (in seconds) & 7.01 & 0.43 & 0.03 & 0.01 \\
 \hline
 \end{tabular}
\vspace{0.3cm}
\caption{Comparison of running errors and costs for approximating $u(0,5)$ via the approximation scheme (\ref{semischeme}) and the
Howard's FD scheme with various time steps
$\Delta=0.01/0.025/0.05/0.1$.}
\end{table}


\begin{thebibliography}{1}

\bibitem{BJ0}
Barles, G. and E. R. Jakobsen. {Error bounds for monotone
approximation schemes for {H}amilton-{J}acobi-{B}ellman equations}.
{\it SIAM Journal on Numerical Analysis}, 43(2): 540-558, 2005.

\bibitem{BJ}
Barles, G. and E. R. Jakobsen. {Error bounds for monotone
approximation schemes for parabolic {H}amilton-{J}acobi-{B}ellman
equations}. {\it Mathematics of Computation}, 76: 1861-1893, 2007.

\bibitem{BJ1}
Barles, G. and E. R. Jakobsen. On the convergence rate of
approximation schemes for Hamilton-Jacobi-Bellman equations. {\it
M2AN Math. Model. Numer. Anal.}, 36(1): 33-54, 2002.

\bibitem{Barles}
Barles, G. and P. E. Souganidis. Convergence of approximation
schemes for fully nonlinear second order equations. {\it Asymptotic
Analysis}, {4(3)}: 271--283, 1991.

\bibitem{Erhan} Bayraktar, E. and A. Fahim.
A stochastic approximation for fully nonlinear free boundary
problems. {\em Numerical Methods for Partial Differential
Equations}, 30(3): 902-929, 2014.

\bibitem{BMZ} Bokanowski, O., S. Maroso, and H. Zidani.
Some convergence results for Howard's algorithm. {\it SIAM Journal
on Numerical Analysis}, 47(4): 3001-3026, 2009.

\bibitem{BPZ} Bokanowski, O., A. Picarelli, and H. Zidani. Dynamic programming
and error estimates for stochastic control problems with maximum
cost. {\it Applied Mathematics \& Optimization}, 71(1): 125-163,
2015.

%\bibitem{MR2547456} Carmona, R. (editor),
%Indifference pricing, theory and applications, {\it Princeton
%University Press}, (2009).

\bibitem{Touzi2}
Bouchard, B. and N. Touzi.
\newblock{Discrete-time approximation and {M}onte-{C}arlo simulation of backward stochastic differential
equations}.
\newblock{\em Stochastic Processes and their Applications},
111(2), 175--206, 2004.

\bibitem{CS} Caffarelli, L.A. and P. E. Souganidis. A rate of convergence for
monotone finite difference approximations to fully nonlinear
uniformly elliptic PDEs. {\it Communications on Pure and Applied
Mathematics}, 61(1):1-7, 2008.

\bibitem{CR} Chassagneux, J.F. and A. Richou. Numerical
simulation of quadratic BSDEs. {\it The Annals of Applied
Probability}, 26(1): 262--304, 2016.

\bibitem{CHLZ}
Chong, W.F., Y. Hu, G. Liang and T. Zariphopoulou. An ergodic BSDE
approach to forward entropic risk measures: representation and
large-maturity behavior. \emph{Finance and Stochastics}, 23(1):
239--273, 2019.

%\bibitem{CIL} Crandall, M. G., H. Ishii, and P.-L. Lions.
%User's guide to viscosity solutions of second order partial
%differential equations. {\it Bull. Amer. Math. Soc.}, 27(1):1-67,
%1992.

%\bibitem{MR2053051}
%Delarue, F.,
%\newblock{Estimates of the solutions of a system of quasi-linear {PDE}s: a probabilistic scheme},
%\newblock{\em S\'eminaire de {P}robabilit\'es {XXXVII}, Lecture Notes in Math., Spring},
%1832, (2003), 290--332.

\bibitem{DHB} Delbaen, F., Y. Hu, and X. Bao. Backward SDEs with
superquadratic growth. {\em Probability Theory and Related Fields},
150(1), 145-192, 2011.



\bibitem {Peng}
El Karoui, N. and R. Rouge. Pricing via utility maximization and
entropy. {\it Mathematical Finance} {10}, 259--276. 2000.

%El Karoui, N., S. Peng, and M. C. Quenez.
%\newblock Backward stochastic differential equations in finance,
%\newblock {\em Mathematical Finance},
%7(1), 1--71, 1997.

\bibitem{Evans}
Evans, L. C. {\it Partial Differential Equations}. {Graduate Studies
in Mathematics, Americal Mathematical Society}, 1998.

\bibitem{EF}
Evans, L. C. and A. Friedman. Optimal stochastic switching and the
Dirichlet problem for the Bellman equation. {\it Trans. Amer. Math.
Soc}, 253:365-389, 1979.

\bibitem{FTW} Fahim A., N. Touzi, and X. Warin. A probabilistic numerical method
for fully nonlinear parabolic PDEs. {\it The Annals of Applied
Probability}, 21: 1322--1364, 2011.

\bibitem{MLMC_method} Giles, M. B. Multilevel monte carlo path simulation. {\it Operations Research}, 56(3), 607--617, 2008.

%\bibitem{Fleming} Fleming, W. H. and H. M. Soner. {\it Controlled Markov Processes and
%Viscosity Solutions}, {Springer}, 2006.

%\bibitem{Henderson1} Henderson, V., Valuation of claims on nontraded assets using
%utility maximization, {\it Mathematical Finance}, {12}, (2002),
%351--373.
%
\bibitem{HL} Henderson, V. and G. Liang. Pseudo linear pricing rule for utility indifference
valuation. {\it Finance and Stochastics}, {18(3)}:593--615, 2014.
%
%
\bibitem{HL1} Henderson, V. and G. Liang. A multidimensional exponential utility
indifference pricing model with applications to counterparty risk.
{\it  SIAM Journal on Control and Optimization}, 54(3): 690--717,
2016.
%
\bibitem{Hu} Hu, Y., P. Imkeller and M. M\"uller. Utility maximization
in incomplete markets. {\it The Annals of Applied Probability},
{15}: 1691--1712, 2005.

\bibitem{Huang} Huang, S. An approximation scheme for variational
inequalities with convex and coercive Hamiltonians. {\it Working
paper}, 2018, arXiv:1810.08842.

\bibitem{Jakobsen} Jakobsen, E. R.
On the rate of convergence of approximation schemes for Bellman
equations associated with optimal stopping time problems. {\it Math.
Models Methods Appl. Sci.}, 13(5): 613-644, 2003.

\bibitem{Kobylanski} Kobylanski, M. Backward stochastic differential
equations and partial differential equations with quadratic growth.
{\it The Annals of Probability}, {28}: 558--602, 2000.

\bibitem{Krylov1} Krylov, N. V.
On the rate of convergence of finite-difference approximations for
Bellman's equation. {\it St. Petersburg Math. J.}, 9(3):639-650,
1997.

\bibitem{Krylov} Krylov, N. V.
{On the rate of convergence of finite-difference approximations
              for {B}ellman's equations with variable coefficients}.
{\it Probability Theory and Related Fields}, {117(1)}: 1-16, 2000.

\bibitem{Krylov2} Krylov, N. V. On the rate of convergence of finite-difference
approximations for elliptic Isaacs equations in smooth domains. {\it
Communications in Partial Differential Equations}, 40(8): 1393-1407,
2015.

\bibitem{Lions} Lions, P. L. and B. Mercier. Splitting algorithms for the
sum of two nonlinear operators. {\it SIAM Journal on Numerical
Analysis}, 16(6): 964-979, 1979.

\bibitem{LS_method} Longstaff, F. A. and E. S. Schwartz. Valuing American options by simulation: a simple least-squares approach. {\it The Review of Financial Studies}, 14(1), 113-147, 2001.

\bibitem{cubature_method} Lyons, T. and N. Victoir. Cubature on Wiener space. {\it Proceedings of the Royal Society of London. Series A: Mathematical, Physical and Engineering Sciences}, 460(2041), 169--198, 2004.

\bibitem{Marchuk} Marchuk, G. I. Some application of splitting-up methods to the
solution of mathematical physics problems. {\it Apl. Mat.}, 13:
103-132, 1968.

\bibitem{NZ} Nadtochiy, S. and T. Zariphopoulou.
An approximation scheme for solution to the optimal investment
problem in incomplete markets. {\em SIAM J. Finan. Math.}, 4(1):
494-538, 2013.

\bibitem{NM}
Nelder, J. A. and R. Mead. A simplex method for function
minimization. {\em Computer Journal}, 7: 308-313, 1965.

\bibitem{Pham} Pham, H. {\it
Continuous-time Stochastic Control and Optimization with Financial
Applications}. {Springer}, 2009.

\bibitem{Reisinger} Picarelli, A., C. Reisinger, and J. Rotaetxe Arto.
{Error bounds for monotone schemes for parabolic
Hamilton-Jacobi-Bellman equations in bounded domains}. {\em Working
paper}, 2017, arXiv:1710.11284.

\bibitem{Tan} Tan X. A splitting method for fully nonlinear degenerate parabolic
PDEs. {\it Electron. J. Probab.}, 18(15): 1-24, 2013.

\bibitem{Tourin} Tourin, A. Splitting methods for Hamilton-Jacobi equations,
{\it Numerical Methods Partial Differential Equations}, 22: 381-396,
2006.

\bibitem{Touzi} Touzi, N. {\it Optimal Stochastic Control, Stochastic Target Problems, and Backward SDE}.
{Springer}, 2012.

\bibitem{Zari} Zariphopoulou, T.,
A solution approach to valuation with unhedgeable risks, {\it
Finance and Stochastics}, {5(1)}: 61--82, 2001.

\end{thebibliography}
\end{document}